
\documentclass[reqno,11pt]{amsart}
\usepackage{amsmath, latexsym, amsfonts, amssymb, amsthm, amscd,mathrsfs}
\usepackage{graphics,epsf,psfrag}
\usepackage[demo]{graphicx}
\usepackage{caption}
\usepackage{subcaption}
\setlength{\oddsidemargin}{5mm}
\setlength{\evensidemargin}{5mm}
\setlength{\textwidth}{150mm}
\setlength{\headheight}{0mm}
\setlength{\headsep}{12mm}
\setlength{\topmargin}{0mm}
\setlength{\textheight}{220mm}
\setcounter{secnumdepth}{2}

\numberwithin{equation}{section}

\newtheorem{maintheorem}{Theorem}
\newtheorem{theorem}{Theorem}[section]
\newtheorem{fact}[theorem]{Fact}

\newtheorem{lemma}[theorem]{Lemma}
\newtheorem{proposition}[theorem]{Proposition}

\newtheorem{remark}[theorem]{Remark}
\newtheorem{definition}[theorem]{Definition}

\newtheorem{example}[theorem]{Example}
\newtheorem{prob}[theorem]{Problem}
\newtheorem{question}[theorem]{Question}

\newcommand{\ind}{\mathbf{1}}

\newcommand{\R}{\mathbb{R}}
\newcommand{\Z}{\mathbb{Z}}
\newcommand{\N}{\mathbb{N}}
\renewcommand{\tilde}{\widetilde}
\renewcommand{\hat}{\widehat}

\newcommand{\cT}{{\ensuremath{\mathcal T}} }
\newcommand{\cD}{{\ensuremath{\mathcal D}} }


\DeclareMathSymbol{\leqslant}{\mathalpha}{AMSa}{"36} 
\DeclareMathSymbol{\geqslant}{\mathalpha}{AMSa}{"3E} 
\DeclareMathSymbol{\eset}{\mathalpha}{AMSb}{"3F}     
\renewcommand{\leq}{\;\leqslant\;}                   
\renewcommand{\geq}{\;\geqslant\;}                   

\DeclareMathOperator*{\Var}{\mathrm Var}      



\newcommand{\pin}{\ensuremath{\pi_n}}





\newcommand{\gep}{\varepsilon}       

\newcommand{\gO}{\Omega}
\newcommand{\gl}{\lambda}

\makeatletter
\def\captionfont@{\footnotesize}
\def\captionheadfont@{\scshape}

\long\def\@makecaption#1#2{%
  \vspace{2mm}
  \setbox\@tempboxa\vbox{\color@setgroup
    \advance\hsize-6pc\noindent
    \captionfont@\captionheadfont@#1\@xp\@ifnotempty\@xp
        {\@cdr#2\@nil}{.\captionfont@\upshape\enspace#2}%
    \unskip\kern-6pc\par
    \global\setbox\@ne\lastbox\color@endgroup}%
  \ifhbox\@ne 
    \setbox\@ne\hbox{\unhbox\@ne\unskip\unskip\unpenalty\unkern}%
  \fi
  \ifdim\wd\@tempboxa=\z@ 
    \setbox\@ne\hbox to\columnwidth{\hss\kern-6pc\box\@ne\hss}%
  \else 
    \setbox\@ne\vbox{\unvbox\@tempboxa\parskip\z@skip
        \noindent\unhbox\@ne\advance\hsize-6pc\par}%
\fi
  \ifnum\@tempcnta<64 
    \addvspace\abovecaptionskip
    \moveright 3pc\box\@ne
  \else 
    \moveright 3pc\box\@ne
    \nobreak
    \vskip\belowcaptionskip
  \fi
\relax
}
\makeatother
\def\writefig#1 #2 #3 {\rlap{\kern #1 truecm
\raise #2 truecm \hbox{#3}}}

\newcommand{\Cov}{{\bf Cov}}

\renewcommand{\Pr}{ \mathrm P}
\newcommand{ \rel}{ t_{\mathrm{rel}} }
\newcommand{ \reln}{ t_{\mathrm{rel}}^{(n)} }
\newcommand{ \mix}{ t_{\mathrm{mix}} }
\newcommand{ \mixn}{ t_{\mathrm{mix}}^{(n)} }
\newcommand{ \dsep}{ d_{\mathrm{sep}} }
\newcommand{ \dsepn}{ d_{\mathrm{sep}}^{(n)} }

\newcommand{ \sepn}{ t_{\mathrm{sep}}^{(n)} }
\newcommand{ \sepeps}{ t_{\mathrm{sep}}(\epsilon) }

\newcommand{ \mixneps}{ t_{\mathrm{mix}}^{(n)}(\epsilon) }

\newcommand{ \h}{ \mathrm{H} }
\newcommand{ \TV}{ \mathrm{TV} }

\newcommand{ \cL}{ \mathcal L }

\newcommand{\eps}{\epsilon }
\newcommand{\del}{\delta }


\newcommand{\tf}{\textsc{f}}

\usepackage[normalem]{ulem}

\begin{document}

\title{On sensitivity of mixing times and cutoff}
\author{
Jonathan Hermon
\and
Yuval Peres}

\thanks{
University of Cambridge, Cambridge, UK. E-mail: {\tt jonathan.hermon@statslab.cam.ac.uk}. Financial support by
the EPSRC grant EP/L018896/1.}

\thanks{
Microsoft Research, Redmond, Washington, USA. E-mail: {\tt peres@microsoft.com}.}

\date{}
\begin{abstract}
A sequence of chains exhibits (total variation) cutoff  (resp., pre-cutoff) if for all $0<\epsilon< 1/2$, the ratio  $\mix^{(n)}(\epsilon)/\mix^{(n)}(1-\epsilon)$ tends to 1 as $n \to \infty  $ (resp., the $\limsup$ of this ratio is bounded uniformly in  $\epsilon$), where $\mix^{(n)}(\epsilon)$ is the $\epsilon$-total variation mixing time of the $n$th chain in the sequence.
We construct a sequence of bounded degree   graphs $G_n$, such that the lazy simple random walks (LSRW)   on $G_n$  satisfy the  ``product condition"  $\mathrm{gap}(G_n) \mix^{(n)}(\epsilon) \to \infty $  as $n \to \infty$, where $\mathrm{gap}(G_n)$ is the spectral gap of the LSRW on $G_n$ (a known necessary condition for pre-cutoff that is often sufficient for cutoff), yet this sequence does not exhibit pre-cutoff.

Recently, Chen and Saloff-Coste showed that total variation cutoff is equivalent for the sequences of continuous-time and lazy versions of some given sequence of chains. Surprisingly, we show that this is false when considering separation cutoff.  

We also construct a sequence of bounded degree graphs $G_n=(V_{n},E_{n})$ that does not exhibit cutoff, for which a certain bounded perturbation of the edge weights leads to cutoff and increases the order of the mixing time by an optimal factor of $\Theta (\log |V_n|)$. Similarly, we also show that ``lumping" states together may increase the order of the mixing time by an optimal factor of $\Theta (\log |V_n|)$. This gives a negative answer to a question asked by Aldous and Fill.
\end{abstract}

\maketitle

{\em Keywords: Reversible Markov chains, Simple random walk, Mixing time, Cutoff, Pre-cutoff, Perturbations, Sensitivity of cutoff, Separation cutoff, Counter-example.}

\section{Introduction}

Consider a  reversible irreducible lazy discrete-time Markov chain $X=(X_{t})_{t\ge 0}$, defined on a finite state space $\gO$ (we call a chain finite if $\Omega$ is finite).
Let $P$ and $\pi$ denote the transition matrix and the unique reversible probability measure associated to $X$, respectively (we  denote such a chain by  $(\Omega,P,\pi)$). In particular, the \emph{laziness} and \emph{reversibility} assumptions are (resp.) that $P(x,x) \ge 1/2 $ and $\pi(x)P(x,y)=\pi(y)P(y,x) $ for all $x,y \in \Omega$. To avoid periodicity and near-periodicity issues, one often considers
the lazy version of a discrete time Markov chain, $(X_t^{\mathrm{L}})_{t
= 0}^{\infty}$, obtained by replacing $P$ with $P_{\mathrm{L}}:=\frac{1}{2}(I+P)$. Periodicity issues can be avoided also by considering the continuous-time version of the chain, $(X_t^{\mathrm{c}})_{t
\ge 0}$.
This is a continuous-time Markov chain whose heat kernel is defined by $H_t(x,y):=\sum_{k=0}^{\infty}\frac{e^{-t}t^k}{k!}P^k(x,y)$.
It is a classic result of probability theory that for any initial condition
the distribution 
of both $X_t^{\mathrm{L}}$ and $X_t^{\mathrm{c}}$ converge to $\pi$ when $t$ tends to infinity. The object of the
theory of Mixing times of Markov chains is to
study the characteristic of this convergence (see \cite{cf:LPW}
for a self-contained
introduction to the subject).

%

\medskip

For any two distributions $\mu,\nu$ on $\Omega$,  their \emph{total variation
distance} is defined as
$\|\mu-\nu\|_\mathrm{TV} := \frac{1}{2}\sum_{x } |\mu(x)-\nu(x)|$.
The worst-case total variation distance at time $t$ is defined as $d(t)
:= \max_{x \in \Omega}   \|\Pr_{x}^t- \pi\|_\mathrm{TV}$, where we denote by $\Pr_{x}^t$ (resp.~$\Pr_{x}$) the distribution of $X_t$ (resp.~$(X_t)_{t
\ge 0}$), given that $X_0=x$.
The $\epsilon$ total variation \emph{mixing time} is defined as  $$t_{\mathrm{mix}}(\epsilon)
:= \inf \left\{t : d(t) \leq
\epsilon \right\}, $$

and for the continuous-time chain as $t_{\mathrm{c}}(\epsilon):=\inf \left\{t : \max_x \|H_t(x,\cdot)-\pi(\cdot) \|_{\TV} \leq
\epsilon \right\}$.

Similarly, the (worst-case) \emph{separation distance} from stationarity at time $t$ (resp.~for the continuous-time chain) is defined as 
$$\dsep(t):=1-\min_{x,y\in \gO} P^t(x,y)/\pi(y), \quad \dsep^{\mathrm{c}}(t):=1-\min_{x,y\in \gO} H_{t}(x,y)/\pi(y)$$
and the $\epsilon$ \emph{separation time} (the ``$\epsilon$ separation mixing time") is defined as
$$\sepeps:= \inf \left\{t : \dsep(t) \leq
\epsilon \right\}, \quad (\text{resp. }t_{\mathrm{sep,c}}(\epsilon):= \inf \left\{t : \dsep^{\mathrm{c}}(t)\leq
\epsilon \right). $$

When $\epsilon=1/4$ we omit it from the above
notation.

\medskip

In this work we consider lazy simple random walks (\textbf{LSRW}) on a sequence of finite (uniformly) bounded degree connected graphs, $G_n:=(V_n,E_n)$, whose sizes tend  to  infinity and also lazy random walks on a sequence of networks obtained from them via a bounded perturbation of their edge weights (we defer the formal definition to \S~\ref{s: bddpert}).

\medskip

Following \cite{cf:Ding}, where Ding and the second author showed that a bounded perturbation can increase the order of the total variation mixing time (we state their result in more details in \S~\ref{sec: Related}), we  study (by constructing relevant examples) the  possible effects of such bounded perturbations on the convergence to the stationary distribution (of the corresponding lazy random walks on the perturbed networks compared to the original LSRWs). In particular, our Theorem \ref{thm: sensitive2} asserts that such bounded perturbations can increase the order of the total variation mixing-times by an optimal (as explained in \eqref{eq: maxchange} below) factor  of $\Theta(\log |V_n|)$.

While the aforementioned result is merely an improvement and a simplification of  \cite[Theorem 1.1]{cf:Ding}, various aspects related to sensitivity of mixing times and the \emph{cutoff phenomenon} (of LSRW on a sequence of uniformly bounded degree graphs $G_n=(V_n,E_n)$) are considered in this work for the first time:
\begin{itemize}
\item[(1)]
We consider $o(1)$-perturbations (in which the weight of each edge may increase only by a $1+o(1)$ multiplicative factor) and show that they may increase the order of the mixing time by a factor whose order is arbitrarily close to $\log |V_n|$ (part (b) of Theorem \ref{thm: sensitive2}).

\medskip

\item[(2)]
We consider sensitivity of mixing-times under \emph{lumping} (Definition \ref{def:lumping}). We show that (even in the bounded degree unweighted setup) lumping together pairs of vertices may increase the order of the mixing time by an optimal factor  of $\Theta(\log |V_n|)$ (part (c) of Theorem \ref{thm: sensitive2}). This provides a negative answer to a question by Aldous and Fill \cite[Problem 4.45]{cf:Aldous} (Problem \ref{prob:AF} here).

\medskip

\item[(3)] We show that (in the above setup) the mixing time of the lazy non-backtracking random walk may be larger than that of the LSRW by a factor of  $\Theta(\log |V_n|)$ (Remark \ref{rem:NBRW}). A similar example (in which the ratio of the mixing times is   $o(\log |V_n|)$) was recently constructed by Hubert Lacoin et al.~during an AIM workshop on mixing times of Markov chains.

\medskip

\item[(4)] We show that the occurrence/non-occurrence of cutoff/pre-cutoff (see \eqref{def:cutoff}-\eqref{eq: precutoff}) is sensitive under $o(1)$ perturbations of the edge weights (Theorems \ref{thm sensitivity} and \ref{thm: sensitive2}). We also show that even in the above setup, the \emph{product condition} \eqref{eq: pcond} need not imply pre-cutoff (Theorem \ref{thm sensitivity}).

\medskip

\item[(5)] Perhaps our most surprising result (Theorem \ref{thm: ctslazy}) is that the occurrence of \emph{separation cutoff} \eqref{def:cutoff} for the sequence of discrete-time lazy chains, does not imply the same for the associated sequence of continuous-time chains (this can be interpreted as a "sensitivity" result w.r.t.~the choice of discrete/continuous time\footnote{A similar sensitivity holds w.r.t.~the choice of holding probability (see Remark \ref{rem: lazypqintro}).}). This is in contrast with the case of total variation cutoff  \cite{cf:Chen} due to Chen and Saloff-Coste.
\end{itemize}

 In \cite{Hermon2016} the first author constructed a sequence of pairs of 2-roughly isometric graphs $G_n=(V_n,E_n),G'_n=(V'_n,E'_n) $ of uniformly bounded degree with $|V_n| \to \infty $, whose $\ell_{\infty}$-mixing times differ by an optimal factor of  $\Theta(\log \log |V_n|)$. In this paper we study the convergence to the stationary distribution only w.r.t.~the total variation distance and the separation distance.

\subsection{The general moral of our results}
We now discuss the moral of our results. An important question is whether mixing times are robust. A related question is whether they can be characterized (perhaps only up to some universal constants and only under reversibility) using a geometric quantity which is robust. Different variants of this question were asked by various authors such as Pittet and Saloff-Coste \cite{pittet}, Kozma \cite[p.\ 4]{Kozma}, Diaconis and Saloff-Coste \cite[p.\ 720]{diaconis} and Aldous and Fill \cite[Open Problem 8.23]{cf:Aldous} (Kozma conjectured that  the  $\ell_\infty$-mixing time is robust under rough-isometries for LSRWs on bounded degree graphs, and the last two references ask for an extremal characterization of the $\ell_{\infty}$-mixing time in terms of the Dirichlet form).  

There are numerous works aiming at sharp geometric bounds on mixing-times such as the Fountoulakis-Reed bound \cite{cf:FR} (recently refined by Addario-Berry and Roberts \cite{Addario}) on the total variation mixing time and Morris and Peres' evolving sets bound \cite{cf:Evolving} on the $\ell_\infty$-mixing time, both expressed in terms of the expansion profile of the graph. The sharpest geometric bounds on the $\ell_\infty$-mixing time are given by the spectral profile, due to Goel et al.~\cite{cf:Spectral} and by the Log-Sobolev bound \cite[Corollary 3.11]{diaconis} due to Diaconis and Saloff-Coste. 
 Because these bounds involve geometric quantities, they are robust under small changes to the geometry, like bounded perturbations of the edge weights or (in the bounded degree unweighted setup) under rough-isometries. 

 Our results strengthen the  cautionary
note of Ding and Peres \cite{cf:Ding} (and also of \cite{Hermon2016}) on the possibility of developing a sharp geometric bound on mixing-times. Indeed, any sharp bound would have to distinguish in some cases between the LSRW
on a graph and the walk obtained by some $o(1)$-perturbation.

\medskip

Although receiving much attention, the investigation of the cutoff phenomenon has progressed mostly through the study of examples (or of certain classes of chains) rather than by developing general theory. Our results concerning sensitivity of the cutoff phenomenon (namely, Theorem \ref{thm: ctslazy} and parts of Theorems \ref{thm sensitivity} and \ref{thm: sensitive2}) demonstrate some difficulties in developing such general theory. 

Theorem \ref{thm: ctslazy} demonstrates that despite the fact that the separation and the total variation distances are intimately related to one another (e.g.~\cite[Eq.\ (1.5), (1.7) and (1.8)]{cf:Hermon}), the former may exhibit surprising behaviors which the latter cannot exhibit. For more on this point see \cite[Remark 1.4 and \S~2.4]{cf:Hermon}.   

\subsection{Cutoff and pre-cutoff}

Before stating our results concerning the cutoff phenomenon we must first give a few definitions.
Next, consider a sequence of chains, $((\Omega_n,P_n,\pi_n): n \in \N)$,
each with its corresponding
worst-distances from stationarity $d^{(n)}(t)$, $\dsepn(t)$, its mixing and separation times $t_{\mathrm{mix}}^{(n)}$, $\sepn $,
etc.. Loosely speaking, the total variation (resp.~separation) \emph{\textbf{cutoff phenomenon}}
 is said to occur when over a negligible period of time, known as the \emph{\textbf{cutoff
window}}, the worst-case total variation distance (resp.~separation distance) drops abruptly from a value
close to 1 to near $0$. In other words, one should run the $n$-th chain until
time $(1-o(1))\mixn $ (resp.~$(1-o(1))\sepn $) for it to even slightly mix in total variation (resp.~separation), whereas
running it any further after time $(1+o(1))\mixn $ (resp.~$(1+o(1))\sepn $) is essentially redundant.
Formally, we say that the sequence exhibits a \emph{\textbf{total variation cutoff}} (resp.~\emph{\textbf{separation cutoff}}) if the
following
sharp transition in its convergence to stationarity occurs:
\begin{equation}\label{def:cutoff}
\forall \gep\in (0,1/2], \quad  \lim_{n \to \infty}t_{\mathrm{mix}}^{(n)}(\epsilon)/t_{\mathrm{mix}}^{(n)}(1-\epsilon)=1 
\left( \text{ resp. }\lim_{n \to \infty}t_{\mathrm{sep}}^{(n)}(\epsilon)/t_{\mathrm{sep}}^{(n)}(1-\epsilon)=1\right).
\end{equation}

\medskip

We say that the sequence exhibits a \emph{\textbf{(total variation) pre-cutoff}}
 if
 \begin{equation}
 \label{eq: precutoff}
 \sup_{0<\epsilon<1/2}
\limsup_{n\to \infty} \mixneps/\mixn(1-\eps)< \infty.
\end{equation}

The notions of total variation and separation cutoff for the corresponding sequence of continuous-time chains are defined in an analogous manner ($\lim_{n \to \infty}t_{\mathrm{c}}^{(n)}(\epsilon)/t_{\mathrm{c}}^{(n)}(1-\epsilon)=1$ and $\lim_{n \to \infty}t_{\mathrm{sep,c}}^{(n)}(\epsilon)/t_{\mathrm{sep,c}}^{(n)}(1-\epsilon)=1 $, resp., for all $\epsilon \in (0,1) $). One can also consider the mixing time and separation-time for the sequence of associated lazy chains and define the two notions of cutoff for it. Recently, Chen and Saloff-Coste \cite{cf:Chen} showed that if $t_{\mathrm{c}}^{(n)} \to \infty$ then the sequence of the associated continuous-time chains exhibits total variation cutoff iff the same holds for the sequence of the associated lazy chains. A natural question (L.~Saloff-Coste, private communication) is whether the same is true for cutoff in separation. Surprisingly, this turns out to be false.
\begin{maintheorem}
\label{thm: ctslazy}
There exists a sequence of reversible chains so that the lazy chains exhibit separation cutoff but the associated continuous-time chains do not exhibit cutoff.
\end{maintheorem}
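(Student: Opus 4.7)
The plan is to exhibit an explicit sequence of reversible chains $(P_n)$ realising the asserted dichotomy. The underlying asymmetry is spectral: the lazification map $\lambda\mapsto(1+\lambda)/2$ crushes eigenvalues of $P_n$ near $-1$ down to values near $0$ that vanish after one lazy step, while the continuous-time map $\lambda\mapsto e^{-t(1-\lambda)}$ preserves them as $e^{-2t}$. Probabilistically, this reflects that the lazy chain takes $\mathrm{Binomial}(t,1/2)$ steps of $P_n$ while the continuous-time chain takes $\mathrm{Poisson}(t)$ steps---two distributions with different variances and different moderate-deviation tails. Because the separation distance $\dsep(t)=1-\min_{x,y}P^t(x,y)/\pi(y)$ takes a \emph{minimum} over pairs $(x,y)$, it is sensitive to a single strategically placed eigenvalue whose eigenfunction has large magnitude on the right pair; by contrast, total variation averages and is much more forgiving, which is precisely what lets the Chen--Saloff-Coste equivalence go through in that case.

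Concretely, I would build $P_n$ by superposing a ``bulk'' chain whose lazy version exhibits separation cutoff at some time $s_n\to\infty$ (for instance via a cluster of slow eigenvalues near $+1$) with an auxiliary ``gadget'' whose role is to produce additional eigenvalue(s) $\lambda_n^\star$ and eigenfunction(s) $f_n^\star$ calibrated so that $(P_n)_L^{s_n}$ is essentially unaffected by them, but the continuous-time $H_t^{(n)}$ picks up a persistent extra term over a macroscopic window around $s_n$. The gadget is attached on a region of the state space with very small stationary mass at some state $y_n$; by the normalization $\sum_z\pi_n(z)\,f_n^\star(z)^2=1$, this allows $|f_n^\star(x_n)f_n^\star(y_n)|$ to be as large as is required to amplify the gadget's contribution to continuous-time separation at a designated pair $(x_n,y_n)$, while the same contribution is killed in lazy time by the factor $((1+\lambda_n^\star)/2)^{s_n}$.

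Verifying lazy separation cutoff at $s_n$ then reduces to two inputs: the bulk's cluster analysis, together with the bound $|f_n^\star(x_n)f_n^\star(y_n)|\,((1+\lambda_n^\star)/2)^{s_n}=o(1)$ showing the gadget is invisible at time $s_n$. The failure of continuous-time separation cutoff is established by showing that at the pair $(x_n,y_n)$ the quantity $1-H_t^{(n)}(x_n,y_n)/\pi_n(y_n)$ is driven by the gadget term over the relevant range of $t$, producing an $\epsilon$-dependent multiplicative delay in $t_{\mathrm{sep,c}}^{(n)}(\epsilon)$ which prevents $t_{\mathrm{sep,c}}^{(n)}(\epsilon)/t_{\mathrm{sep,c}}^{(n)}(1-\epsilon)$ from tending to $1$. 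The main obstacle is the joint calibration: the parameters must be tuned so that (i) lazy cutoff is preserved at $s_n$, (ii) the continuous-time separation profile at $(x_n,y_n)$ creates an $\epsilon$-dependent delay on a scale comparable to (not $o$ of) $s_n$, and (iii) the continuous-time separation mixing time still tends to infinity so the cutoff question is non-vacuous, all while maintaining reversibility of the resulting weighted-graph chain and ensuring that $(x_n,y_n)$ is indeed the separation-attaining pair for $H_t^{(n)}$ in the relevant window.
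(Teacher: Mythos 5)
Your plan has a genuine gap at its core mechanism. The gadget you describe -- eigenvalue(s) $\lambda^\star_n$ with $(1+\lambda^\star_n)/2$ small, amplified by an eigenfunction that is huge at a state of tiny stationary mass -- cannot by itself destroy continuous-time separation cutoff. Failure of cutoff requires the separation profile to linger in $[\epsilon,1-\epsilon]$ on a window of width comparable to $t_{\mathrm{sep,c}}^{(n)}$. But any gadget contribution that is to be invisible to the lazy chain at time $s_n$ yet relevant to the continuous-time chain at a time beyond $s_n/2$ is forced to have $1-\lambda^\star_n$ bounded below by a constant (otherwise $|f^\star(x_n)f^\star(y_n)|\,((1+\lambda^\star_n)/2)^{s_n}$ is not $o(1)$ once the coefficient is large enough to matter in continuous time); and if the relevant coefficients at $(x_n,y_n)$ all push in one direction, the resulting term $\sum_i M_i e^{-(1-\lambda_i)t}$ has instantaneous decay rate bounded below, so it crosses from $1-\epsilon$ to $\epsilon$ in time $O(\log\frac{1-\epsilon}{\epsilon})$. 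Concretely, for a single calibrated eigenvalue one gets $t_{\mathrm{sep,c}}^{(n)}(\epsilon)\approx (1-\lambda^\star_n)^{-1}\bigl(\log M_n+\log(1/\epsilon)\bigr)$ with $\log M_n=\Theta(s_n)$: the $\epsilon$-dependence is an additive $O(\log(1/\epsilon))$, not the multiplicative delay your step (ii) requires, so the continuous-time chains would still exhibit cutoff, merely at a shifted time. Producing a genuine plateau forces a signed, multi-scale conspiracy of eigen-contributions, which in probabilistic terms means engineering a hitting-time distribution with two macroscopically separated concentration points -- and then the real question, which your plan never addresses, is why the very same structure does not also spread out the lazy separation profile, since to first order the two chains differ only by the deterministic factor $2$ in time.

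That last point is exactly where the paper's proof lives. You correctly name the Binomial-versus-Poisson step-count discrepancy as motivation, but your construction sketch never uses it quantitatively. The paper reduces separation to hitting times via Lemma \ref{lem: pathsdecomposition}, arranges $\pi(z)=\Theta(2^{-\delta n})$ so that the relevant regime is the moderate/large-deviation one, and the entire construction hinges on the strict inequality $t_{\delta}-2\tau_{\delta}=\Theta(\sqrt{\delta}\,n)$ of \eqref{eq: tdel}: this lower-order (scale $\sqrt{\delta}\,n$) discrepancy between the lazy and continuous-time rate functions is precisely the room, encoded in \eqref{eq: deldel's}, in which the slow branch $C$ can be placed so that $\Pr_a(T_{z'}>t)$ acquires a second drop after $\tau_{\delta}$ (ruining continuous-time cutoff) while both drops occur before $t_{\delta}$ (so the lazy profile remains a single abrupt transition at $t_{\delta}$, driven by $P_{\mathrm{L}}^t(a,b)/\pi(b)$). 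Without building this rate-function gap into the calibration, your items (i) and (ii) are in tension and there is no reason the construction can satisfy both; so the proposal, as it stands, restates the goal in spectral language but is missing the idea that actually makes the dichotomy possible.
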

\begin{remark}
\label{rem:11}
The example we give for Theorem \ref{thm: ctslazy} is intimately related to Example 3 in \cite{cf:Hermon} and could be transformed into an example involving simple random walks on a sequence of bounded degree graphs using a similar construction as Example 5 in \cite{cf:Hermon}.
\end{remark}

\begin{remark}
\label{rem: lazypqintro}
The $\delta$-lazy version of a chain with transition matrix $P$ is obtained by replacing $P$ with $\delta I+(1-\delta)P$ (where $I$ is the identity matrix). Chen and Saloff-Coste \cite{cf:Chen} showed that given a sequence of chains, the corresponding sequence of $1/2$-lazy chains exhibits total variation cutoff iff the same holds for the corresponding sequence of $p$-lazy chains, for all $p \in (0,1)$. One can use the idea behind the construction from the proof of Theorem \ref{thm: ctslazy} in order to construct a family of reversible examples demonstrating that for all $p \neq q \in (0, 1) $ it is possible that the sequence of $q$-lazy versions of a certain sequence of chains exhibits separation cutoff, while the sequence of $p$-lazy versions does not exhibit separation cutoff or vice-versa\footnote{Our analysis shows that for all $p \neq q \in (0,1)$ it is possible to construct an example in which separation cutoff fails for exactly one of the corresponding lazy chains. We do not determine for which of them separation cutoff fails. Determining which of the two chains exhibits separation cutoff requires a more detailed comparison of certain two large deviation rate functions, where for our analysis it suffices to use the fact that the two rate functions are not identical.}. The necessary adaptations are described in \S~\ref{sec: lazypq}.
\end{remark}
\begin{question}
Is it the case that separation cutoff for the sequence of continuous-time chains implies the same for the sequence of lazy chains?
\end{question}
Recall that if $(\Omega,P,\pi)$ is a finite reversible irreducible lazy chain,
then $P$ is self-adjoint w.r.t.~the standard inner-product induced by $\pi$ (see Definition \ref{def: Linfty}) and hence has $|\Omega|$ real eigenvalues,
 $1=\lambda_1>\lambda_2 \ge \ldots \ge
\lambda_{|\Omega|} \ge 0$ (where $\lambda_2<1$ and $\lambda_{|\Omega|} \geq 0$ by irreducibility and laziness). The \emph{spectral-gap}
and   \emph{relaxation-time} are defined as $\mathrm{gap}:= 1-\lambda_2$ and $t_{\mathrm{rel}}:=(1-\lambda_2)^{-1}$, resp.. We denote the relaxation-time of LSRW  on $G=(V,E)$ by $\rel(G)$.

In 2004  \cite{peresamerican}, during an AIM workshop on the cutoff phenomenon, the second author introduced the so called \textbf{product condition}: \begin{equation}
\label{eq: pcond}
\reln =o(\mixn),
\end{equation}
a necessary condition for pre-cutoff (by \eqref{eq: t_relintro}; e.g.~\cite[Proposition 18.4]{cf:LPW}), and suggested that it is also a sufficient condition for cutoff for many ``nice" families of reversible chains. In general, the product condition does not always imply cutoff. Aldous and
Pak (private communication via P.~Diaconis) have constructed
relevant (reversible) examples (see \cite[Chapter 18]{cf:LPW}, Pak's example is described in \S~\ref{sec: Related}). This
left open the problem of identifying general classes of chains for which
the product condition is indeed sufficient for cutoff. This was verified e.g.~for lazy birth and death chains \cite{cf:bdcutoff} and recently for lazy weighted walks on trees \cite{cf:cutoff}.

\medskip

In Aldous' example the graph supporting the transitions is of bounded degree and contains only a single cycle, however the ratio of the maximal and minimal edge weights is exponentially large in the size of the state space. As noted in  \cite{cf:LS},  Aldous' example, which exhibits pre-cutoff, can be transformed into a sequence of LSRWs on bounded degree graphs (with pre-cutoff). Explicit constructions of such graphs (which were constructed as examples demonstrating that, in general, neither total variation cutoff nor separation cutoff  implies the other) can be found at \cite{cf:Hermon}.

  Until now, as in Pak's example, every known example in which the product condition does not imply pre-cutoff had unbounded degrees. These  examples all share the following behavior. The chain mixes (in some sense) ``at once" due to the occurrence of a certain rare event, which occurs before the chain has enough time to get even slightly mixed otherwise. It is plausible that (some concrete formulation of) the aforementioned behavior is necessary in order for pre-cutoff to fail when the product condition holds. Moreover, a-priori, it is not clear whether the mechanism that allows such behavior can be produced in the bounded degree (unweighted) setup. 

\medskip

A question, presented to us by E.~Lubetzky (private communication), which naturally arises in light of the above discussion,  is whether the product condition is also a sufficient condition for pre-cutoff for a sequence of LSRWs on bounded degree graphs  $\{ G_n \}_{n \in \N} $. One case in which this holds
(as a simple consequence of $\ell_2$-contraction; see Lemma \ref{lem: contraction}) is when $\{ G_n \}_{n \in \N} $ is a family of bounded degree expanders (that is, $\rel(G_n)=\Theta(1) $).

\begin{prob}[E.~Lubetzky (private communication)]
\label{prob: L}
Let $\{ G_n \}_{n \in \N} $ be a sequence of finite (uniformly)  bounded degree graphs satisfying the product condition. Is it always the case that the sequence of lazy simple random walks on $\{ G_n \}_{n \in \N} $  exhibits a pre-cutoff?
\end{prob}

Our  Theorem \ref{thm sensitivity} provides a negative answer to this question. Our construction may be viewed as a bounded degree (unweighted) version of the aforementioned Pak's example (see \S~\ref{sec: Related} and Remark \ref{rem: Geo} for further details concerning this point).

\subsection{Perturbations of edge weights and lumping}
\label{s: bddpert}
Let  $G=(V,E)$  be a finite connected simple graph. Given a (weighted) network $(V, E,(c_{e})_{e
\in E})$, where each edge $\{u,v\} \in E$ is endowed with a conductance $c_{u, v}=c_{v,u}>0$ (with the convention that $c_{u,v}=0$ if $\{u,v\} \notin E$),  a lazy random walk on $G=(V,E)$ repeatedly does the following: when the current state is $v\in V$,  the random walk will stay at $v$ with probability $1/2$ and move to vertex $u$ (such that $\{u,v\} \in E$) with probability $c_{u, v}/(2c_{v})$, where $c_v:=\sum_{w} c_{v, w}$.
The default choice for $c_{u, v}$ is 1 (in which case, we say that the random walk is {\em unweighted}), which corresponds to lazy simple random walk  on $G$ (in which at each step the walk with equal probability either stays put or moves to a new vertex, chosen from the uniform distribution over the neighbors of the current position). Its stationary distribution is given by $\pi(x):=c_x/c_V$, where $c_{V}:=\sum_{v \in V}c_v=2\sum_{e \in E}c_e$.
 We denote by $t_{\mathrm{mix},G,(c_{e})}(\eps)$ the total variation $\eps$-mixing time of the lazy random walk on the network induced on the graph $G$ by the edge weights $(c_e)_{e \in E } $. When the walk is unweighted we omit $(c_{e})_{e
\in E}$ from our notation. As always, when $\epsilon=1/4$ we omit it from the notation. In the unweighted setup we write $\mix (G)$ for the $(1/4)$-mixing time.

\begin{definition}
\label{def:lumping}
Given some network  $(V, E,(c_{e})_{e
\in E})$ with stationary distribution by $\pi$, consider a partition of $V$ into disjoint sets $A_1,\ldots,A_k$ with $\cup_{i=1}^k A_i=V$. The induced network, obtained by \emph{lumping} together the states belonging to $A_i$ for all $1 \le i \le k$ is a network on $[k]:=\{1,\ldots,k\}$ with transition probabilities $\hat p_{i,j}:=\Pr_{\pi}[X_1 \in A_j \mid X_0 \in A_i ]$. It can be obtained by collapsing each $A_i$ into a single state $i$ and setting the edge weight of the edge connecting $i$ and $j$ to be $\hat c_{i,j}:=\sum_{a_{i} \in A_i,a_{j} \in A_j }c_{a_i,a_j}$ for all $(i,j) \in [k] \times [k]$ (in particular, it is reversible).
\end{definition}
 Proposition 4.44 in \cite{cf:Aldous} asserts that several natural parameters of a reversible Markov chain, like the inverses of its spectral gap and cheeger constant, can only decrease as a result of lumping states together. This motivates the following question asked by Aldous and Fill  \cite[Open Problem 4.45]{cf:Aldous}.

\begin{prob}
\label{prob:AF}
Is it the case that lumping states together can increase the total variation mixing time of a reversible Markov chain by at most some constant factor $K$? Can one take $K=1$?
\end{prob}  
Our  Theorem \ref{thm: sensitive2} (part (c)) gives a negative answer to Problem \ref{prob:AF}.

 We say that $(w_e^{(n)})_{e \in E_n}$ is a sequence of \textbf{bounded perturbations} (respectively $o(1)$\textbf{-perturbations}) if $w_e^{(n)} \ge 1 $ for all $e \in E_n$ for all $n$ and $M((w_e^{(n)})):=\sup_n M_{n}< \infty  $ (resp.~$M_{n}=1+o(1)$), where $M_n:=\sup_{e \in E_n}w_e^{(n)}$.\footnote{Note that there is no loss of generality in the requirement that  $w_e^{(n)} \ge 1 $ for all $e \in E_n$, since multiplying all of the edge weights by the same constant has no effect on the distribution of the walk.}

\medskip

We say that $\{ G_n \}_{n \in \N} $ is {\bf robust} if performing a sequence of bounded perturbations, $(w_e^{(n)})_{e \in E_n}$ preserves the  mixing-times up to a constant factor of $K$ (independent of $n$, but which may depend on $M((w_e^{(n)}))$). We say that $\{ G_n \}_{n \in \N} $ is {\bf sensitive} if it is not robust. We say that   $\{ G_n \}_{n \in \N} $  is $o(1)$-\textbf{sensitive} if there exists a sequence of $o(1)$-perturbations which either increases or  decreases the order of the mixing-times.

\medskip

The girth of a graph $G$ is defined as the minimal length of a cycle in $G$. In general (even in the weighted case) $\mathrm{girth}(G) \le \mathrm{2diameter}(G) +1\le C \mix(G)$.

\begin{maintheorem}
\label{thm sensitivity}
For every $f(n)=o(\log n / \log \log n)$ such that $\lim_{n \to \infty}f(n)=\infty$,   there exists a sequence of bounded degree graphs $G_{n}=(V_{n},E_{_{n}})$ satisfying 
\begin{itemize}
\item[(i)]
$\mathrm{girth}(G_n)=\Theta(\mix(G_n)) $.

\medskip

\item[(ii)] The corresponding sequence of LSRWs satisfies the product condition but does not exhibit  pre-cutoff.

\medskip

\item[(iii)] There exists a sequence of $o(1)$-perturbations, $(c_e^{(n)})$, such that the corresponding sequence of lazy walks exhibits a cutoff and satisfies\begin{equation}
\label{eq: increasedtmix}
t_{\mathrm{mix},G_n,(c_{e}^{(n)})}/t_{\mathrm{mix}}(G_{n})=\Theta \left(f(|V_{n}|)\right).
\end{equation}
\end{itemize}
\end{maintheorem}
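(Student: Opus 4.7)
The plan is to construct $G_n$ as a bounded-degree unweighted analog of the aforementioned Pak's weighted single-cycle example. The guiding idea is to build a graph with $k := k(n) \to \infty$ distinct ``timescales'' of mixing, with $k$ chosen so that $k \asymp f(|V_n|)$, and to arrange them so that (1) the overall mixing time is dominated by the largest timescale, (2) the TV-distance from stationarity decays in approximately $k$ discrete ``stages'', and (3) a carefully chosen $o(1)$-perturbation collapses these stages into a single dominant one while simultaneously slowing down the walk.

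\textbf{Step 1 (Construction).} Take $k$ building-block subgraphs $H_1,\dots,H_k$ of uniformly bounded degree, where $H_i$ is an expander-like ``gadget'' whose local mixing occurs at a timescale $\theta_i$, with $\theta_i/\theta_{i-1}$ bounded below by a fixed constant $>1$. Join the $H_i$'s along a common backbone (a path with short bridges) so the resulting graph has uniformly bounded degree; arrange the parameters so that $|V_n|$ is comparable to $\sum_i |V(H_i)|$, and so that $\mix(G_n)=\Theta(\theta_k)$ and $\mathrm{girth}(G_n)\asymp\theta_k$, which gives (i).

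\textbf{Step 2 (Product condition and failure of pre-cutoff).} Using a Cheeger-type bound together with comparison to a suitable product chain, verify that $\rel(G_n)=o(\mix(G_n))$, establishing the product condition. For the failure of pre-cutoff, start the walk from a vertex deep inside the largest gadget $H_k$ and track the worst-case TV distance: at time $\theta_i$ the blocks of timescale $\leq \theta_i$ have equilibrated while larger blocks have barely moved, so by a coupling/distinguishing-statistic argument the TV distance equals $1-\Theta(i/k)$ at time $\Theta(\theta_i)$. This yields $\mixn(\epsilon)/\mixn(1-\epsilon)\to\infty$, so the sequence does not exhibit pre-cutoff.

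\textbf{Step 3 (The $o(1)$-perturbation).} Modify the edge weights along the backbone so that each weight is $1+o(1)$, but the cumulative multiplicative deviation across the backbone grows like a polynomial (or slowly-growing) function that equalizes the effective stationary mass across the gadgets and forces the walk through a single ``effective'' bottleneck of escape-time $\Theta(f(|V_n|)\cdot \mix(G_n))$. Under the perturbation, a single dominant timescale governs mixing, which gives cutoff by standard $\ell_2$-contraction combined with a spectral lower bound on the relaxation time of the perturbed chain; a matching lower bound on $t_{\mathrm{mix},G_n,(c_e^{(n)})}$ is obtained via a distinguishing statistic tied to the imbalance created by the drift.

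The main obstacle will be Step 3: tuning the perturbation profile to simultaneously create cutoff and to achieve precisely the factor $\Theta(f(|V_n|))$ increase in mixing time, all while staying within the $o(1)$-perturbation regime (which is constrained because the constant $f$ can be arbitrarily slowly growing, so only very gentle perturbations are available). A related delicate point is in Step 2: verifying the product condition requires showing that the spectral gap is determined by a mode genuinely shorter in timescale than $\theta_k$, so that $\rel(G_n)\ll \mix(G_n)$ despite the presence of $k$ intertwined timescales. Both difficulties are addressed by the same careful choice of block sizes and bridge lengths, exploiting the freedom in choosing the growth profile of $\theta_i$.
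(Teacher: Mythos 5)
Your Step~2 is where the proposal breaks, and the problem is structural rather than technical. The mechanism you propose for killing pre-cutoff --- the worst-case total variation distance decaying in $k$ stages because blocks carrying a non-negligible share of the stationary mass equilibrate at geometrically separated timescales $\theta_1\ll\cdots\ll\theta_k$ --- is incompatible with the product condition. Indeed, fix a small $\epsilon$ and let $B$ be the portion of mass (of order $\epsilon$) that in your picture is still essentially unvisited at time $t\approx\mixneps$, while the walk has already equilibrated on $B^c$ much earlier. Then $\Pr_{\pi_{B^c}}[T_B>t]$ is bounded away from $0$, so $\mathbb{E}_{\pi_{B^c}}[T_B]\gtrsim \mixneps$; but for reversible chains one has $\mathbb{E}_{\pi_{B^c}}[T_B]\le \rel/\pi(B)$ (the Dirichlet eigenvalue of the chain killed on $B$ is at least $\mathrm{gap}\cdot\pi(B)$), whence $\rel\gtrsim \epsilon\,\mixneps$. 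Since your stage structure is exactly what makes $\mixneps/\mixn\to\infty$, this forces $\reln/\mixn\to\infty$: the product condition fails precisely when your construction succeeds in defeating pre-cutoff. (There is also an internal inconsistency: with roughly equal block masses, $\mixn\approx\theta_{3k/4}\ll\theta_k$, contradicting your claim $\mixn=\Theta(\theta_k)$.) The only way around this obstruction --- and it is the route the paper takes --- is to make the slow region carry \emph{negligible} stationary mass and to keep $d^{(n)}(t)$ large not because macroscopic mass is unreached, but because the hitting time of the ``center of mass'' (an expander carrying $1-o(1)$ of the mass) from the worst starting state is \emph{non-concentrated}, with an approximately geometric law of mean $\Theta(\mixn)$; then $\mixneps\asymp|\log\epsilon|\,\mixn$ (no pre-cutoff) while $\reln$ stays of order $s_n^2=o(\mixn)$ because no set of non-negligible mass sits behind a bottleneck. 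Concretely, the paper starts from a Ramanujan Cayley graph (girth $\Theta(\log q_n)=\Theta(\mixn)$, giving (i)), stretches by a factor $s_n$ the edges of a subtree of negligible mass around a root $o$, and designates as escape points exactly those tree vertices whose root-path has an atypical left/right imbalance, calibrated so that each ``generation'' of $m_n$ levels is escaped with probability $\approx 1/b_n$.

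Your Step~3 misses the one idea that makes an $o(1)$-perturbation capable of changing the order of the mixing time. An $o(1)$-perturbation multiplies every conductance and every stationary probability by $1+o(1)$, so it cannot ``equalize the effective stationary mass across the gadgets'' nor change the order of any escape time governed by mass-to-boundary ratios; if your unperturbed example's profile were driven by bottlenecks, no $o(1)$-perturbation could alter it by more than a constant factor. What an $o(1)$-perturbation \emph{can} do is shift the harmonic measure: a per-edge bias $\delta_n=o(1)$ accumulated along stretched paths of length $\ell$ moves exit distributions by $\asymp\delta_n\ell$, enough to suppress events defined by large-deviation-scale imbalances. The paper exploits exactly this: because the escape leaves are defined by atypical imbalances, increasing the ``left'' edge weights to $1+b_n^{-1/3}$ makes the early-escape sets essentially invisible, so the walk traverses the full stretched tree and its escape time concentrates around $\Theta(s_n^2\log q_n)$ --- concentration yields cutoff, and the factor $s_n^2$ is tuned to $\Theta(f(|V_n|))$ for any prescribed $f=o(\log n/\log\log n)$. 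Without building the unperturbed escape mechanism out of such atypical events (so that a gentle bias can switch it off), your perturbation step cannot deliver \eqref{eq: increasedtmix} within the $o(1)$-perturbation constraint.
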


\medskip
The following remark explains the significance of the large girth condition.

\begin{remark}
\label{rem: girth}
In \cite{cf:cutoff} it was shown that a sequence of lazy random walks on weighted trees exhibits cutoff iff it satisfies the product condition. In \cite{cf:PS} it was proved that the mixing time of (possibly weighted) nearest neighbor lazy walks on trees is robust (see \cite[Theorem 1]{Addario} for a recent extension of this result). Combining the two results it follows that for lazy weighted walks on trees the property of exhibiting cutoff is robust. Theorem \ref{thm sensitivity} asserts that the tree assumption in these two results cannot be relaxed to the condition that $\mathrm{girth}(G_n)=\Theta(\mix(G_n)) $ (even in the unweighted setup, and even when considering only pre-cutoff, instead of cutoff).
\end{remark}

\begin{maintheorem}
\label{thm: sensitive2}
\begin{itemize}
\item[(a)]
There exists a sequence of bounded degree graphs $G_n=(V_n,E_n)$ satisfying $\rel(G_n)=\Theta (t_{\mathrm{mix}}(G_{n})) $ (thus lacking pre-cutoff) such that for every $\epsilon>0$, increasing the edge weight of some of the edges of $G_n$ to $1+\epsilon$  increases the mixing time by a factor of $c_{\epsilon} \log |V_n|$, for some constant $c_{\epsilon}>0$ depending only on $\epsilon$. Moreover, the sequence of walks on the perturbed networks exhibits a cutoff.

\medskip

\item[(b)] For every $f(n)=o(\log n)$ such that $\lim_{n \to \infty}f(n)=\infty$ there exists  a sequence of graphs $G_n=(V_n,E_n)$ satisfying $\rel(G_n)=\Theta (t_{\mathrm{mix}}(G_{n})) $  for which there exists a sequence of $o(1)$-perturbations which increases the mixing-times by a factor of $\Theta(f(|V_n|))$. Moreover, the sequence of lazy walks on the perturbed networks exhibits a cutoff.
\item[(c)] There exists a sequence of bounded degree graphs $G_n=(V_n,E_n)$ such that lumping together some pairs of vertices of $G_n$ increases the order of the total variation mixing time by a factor of $\Theta( \log |V_n|) $.
\end{itemize}
\end{maintheorem}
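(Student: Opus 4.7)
All three parts share a common structural idea descending from \cite{cf:Ding}: start from a bounded-degree graph sequence $G_n=(V_n,E_n)$ on which the LSRW has relaxation time comparable to its mixing time (so the product condition fails and pre-cutoff is absent), and then exhibit a small perturbation or lumping that forces the stationary distribution $\pi'$ of the modified chain into a regime where $\log(1/\pi'_{\min})\asymp \log|V_n|$, while the spectral gap changes only by a bounded factor. Combined with the general reversible upper bound $\mix\leq C\rel\log(1/\pi_{\min})$ applied to the perturbed chain, together with a matching lower bound via Wilson's second-moment method on the leading eigenfunction of the perturbed chain, this produces the desired factor-$\Theta(\log|V_n|)$ slowdown; the cutoff claim is extracted from a hitting-time concentration argument in the spirit of \cite{cf:Hermon}.

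For part (a), my plan is to take $G_n$ as a long skeleton (e.g.\ a cycle or path of length $L_n \asymp |V_n|$) decorated by bounded-size gadgets. The skeleton is responsible for the slow diffusive behavior giving $\rel(G_n) \asymp \mix(G_n)$, while the gadgets are where the perturbation acts. Increasing a prescribed subset of gadget edges to weight $1+\epsilon$ skews the stationary distribution of the perturbed walk by a multiplicative factor $(1+\epsilon)^{k_n}$ between the ``typical'' and ``special'' vertex sets, with the geometry chosen so that $k_n\asymp \log|V_n|$. This yields $\log(1/\pi'_{\min})\asymp \log|V_n|$, and hence the upper bound $t_{\mathrm{mix},G_n,(c_e^{(n)})}\leq O(\mix(G_n)\log|V_n|)$ follows once a Dirichlet-form comparison shows that the perturbed relaxation time is at most $C_\epsilon\,\rel(G_n)$. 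The matching lower bound uses a distinguishing statistic built from the top eigenfunction of the perturbed chain, whose means under the initial and stationary measures differ by a large factor relative to its standard deviation. Cutoff follows by verifying that the perturbed walk satisfies the product condition and invoking the hitting-time criterion.

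Parts (b) and (c) are then obtained from part (a) by further modifications. For part (b) I would replace $\epsilon$ by $\epsilon_n = \Theta(f(|V_n|)/\log|V_n|) = o(1)$, so that the skewing exponent becomes $k_n\log(1+\epsilon_n) \asymp f(|V_n|)$, and rerun the argument uniformly in $n$. For part (c) I would replace each perturbed edge in the construction of part (a) by a short unweighted gadget containing a pair of ``twin'' vertices whose lumping produces exactly the target effective edge weight $1+\epsilon$: the unlumped $G_n$ has bounded degree and (up to constants) the same mixing behavior as the unperturbed graph, while the lumped chain coincides with the perturbed chain of part (a), giving a factor-$\Theta(\log|V_n|)$ increase in the mixing time and thereby resolving Problem \ref{prob:AF} negatively.

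The main obstacle is the construction itself: producing, in the bounded-degree setting, an unperturbed LSRW whose mixing time is of the same order as its relaxation time, yet whose stationary distribution leaves a full $\log|V_n|$ factor of ``room'' for the perturbed walk to fill before saturating the universal bound $\mix\leq C\rel\log(1/\pi_{\min})$. Achieving both requirements simultaneously---slow relaxation but with local structure extremely sensitive to the perturbation---is the delicate part; within the analysis, the hardest step is the matching lower bound for the perturbed chain via Wilson's method, as it requires explicit control of the perturbed eigenfunction on the skewed set and a tight estimate of its $L^\infty$-norm.
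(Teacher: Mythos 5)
Your central mechanism does not work. For a lazy random walk on a network the stationary measure is local: $\pi'(x)\propto \sum_y c_{x,y}$, so on a bounded degree graph a perturbation with all weights in $[1,1+\epsilon]$ changes $\pi$ by at most a constant factor ($\leq 1+\epsilon$ pointwise, up to normalization). There is no way to accumulate a multiplicative skew $(1+\epsilon)^{k_n}$ with $k_n\asymp\log|V_n|$ between two vertex sets; that kind of compounding along paths corresponds to geometrically growing edge weights (as in the chain of Theorem \ref{thm: ctslazy}), which is precisely not a bounded perturbation. Moreover $\log(1/\pi_{\min})\asymp\log|V_n|$ already holds for the \emph{unperturbed} bounded degree graph, so the universal bound $\mix\leq \rel\,|\log(\epsilon\pi_{\min})|$ gives no separation between the two chains; it only explains optimality (Remark \ref{rem: optimal}). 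The actual content is (i) an upper bound $\mix(G_n)=O(\rel(G_n))$ for the unperturbed walk and (ii) a \emph{lower} bound of order $\rel\cdot\log|V_n|$ for the perturbed walk, and neither follows from stationary-measure considerations. The mechanism that does work (and is what the paper uses, following Ding--Peres and Benjamini) is sensitivity of the \emph{harmonic measure}, not of $\pi$: the graph is a binary tree of depth $s=k^3$ whose leaves are wired into an expander, and a trap set $D$ of vertices whose root-paths are ``unbalanced'' in the sense of violating the law of the iterated logarithm is decorated by $k\times k\times k$ tori. Unperturbed, the left-minus-right count along the walk is an unbiased SRW on $\Z$, so by the LIL the walk meets only $O(1)$ traps and $\mix(G_n)=\Theta(s)=\Theta(\log|V_n|)$, while the tori force $\rel(G_n)=\Omega(k^3)$, giving $\rel\asymp\mix$. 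Raising each left edge to weight $1+\epsilon$ gives that count a drift $\Theta(\epsilon)$, so the perturbed walk passes through $\Theta(s)$ traps, each delaying it by $\Theta(s)$ steps; concentration of the resulting hitting time of the deep levels (plus Proposition \ref{prop: L2contractiononasubchain}) yields mixing time $\Theta(s^2)$ and cutoff. Your one-dimensional skeleton (path or cycle with bounded gadgets) has no branching, hence no harmonic measure for the perturbation to act on; for such graphs the mixing time is in fact robust under bounded perturbations (compare the tree results of \cite{cf:PS,Addario} cited in Remark \ref{rem: girth}), so no choice of gadgets there can produce an unbounded slowdown.

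Because part (a) is where the construction lives, your reductions for (b) and (c) inherit the gap. For (b), even granting the correct tree mechanism, the scaling $\epsilon_n=\Theta(f/\log|V_n|)$ is not the right one: an $o(1)$ bias $\delta_n$ must beat the CLT fluctuations over the spacing between trap levels, which is why the paper takes either spacing $C(k)\to\infty$ with $\delta_n= K\sqrt{\log\log C(k)/C(k)}$, or blocks of length $\ell_n$ with $\delta_n=Km_n/\sqrt{\ell_n}$ and a local CLT to tune the density of $D$. Your idea for (c) — realizing the bias by lumping twins inside short unweighted gadgets — is in the right spirit (the paper stretches edges by a factor $3$ and lumps the two interior vertices on the ``left'' paths, creating a self-loop and hence a bias toward left children), but it only becomes a proof once a correct part (a) is in place. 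Finally, the proposed Wilson-method lower bound is not needed and would be hard to implement here; the lower bound comes from the concentration of the hitting time of the expander region, before which the walk provably cannot mix.
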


\begin{remark}
\label{rem: optimal}
We note that the $\log |V_{n}|$ factor in part (a) of Theorem \ref{thm: sensitive2} is optimal.
The following general relation holds for lazy reversible chains (e.g.~\cite{cf:LPW} Theorems 12.3-12.4).
\begin{equation}
\label{eq: t_relintro}
(t_{\mathrm{rel}}-1)|\log (2\epsilon)| \le t_{\mathrm{mix}}(\epsilon) \le  t_{\mathrm{rel}}|
\log \left( \epsilon \min_x \pi(x) \right)|.
\end{equation}

\medskip

Let $G=(V,E)$ be a graph of maximal degree $D$. Let  $(c_{e})_{e \in E}$ satisfy $K^{-1} \le c_e \le K $ for all $e \in E$. By the extremal characterization of the spectral gap via the Dirichlet form (e.g.~\cite[Remark 13.13]{cf:LPW}), the relaxation-time is robust under bounded perturbations (i.e.~a bounded perturbation can change it only by a constant factor). Consequently, by (\ref{eq: t_relintro}),
\begin{equation}
\label{eq: maxchange}
1/(C_{D,K}\log |V |) \le t_{\mathrm{mix},G,(c_{e})} /t_{\mathrm{mix}}(G)\le C_{D,K}\log |V |,
\end{equation}
for some constant $C_{D,K}$ depending only on $D$ and $K$. Similarly, since as mentioned earlier lumping does not increase $\rel$, also part (c) is optimal, up to a constant factor. 
\end{remark}
\begin{remark}
We note that in part (a) of Theorem \ref{thm: sensitive2} we have that $\mix(G_n)=\Theta( \log |V_n|)$, and so the mixing time of the perturbed network is $\Theta([\mix(G_n)]^2) $. This is also optimal by (\ref{eq: t_relintro}) and the bound $\mathrm{diameter}(G_n) \ge c \log |V_n|$ (for some $c>0$ depending only on the maximal degree).
\end{remark}
\begin{remark}
If the function $f$ in part (b) above is taken to tend to infinity sufficiently slowly, we can have that the $o(1)$-perturbation from part (b) increases the edge weight by a factor of $1+\delta_n$ for some $\delta_n=o(1)$ such that $\delta_n \sqrt{t_{\mathrm{mix}}(G_{n})} $ tends to infinity arbitrarily slowly. An interesting problem is to determine how small can $\delta_n$ be taken in terms of $\mix(G_n)$ (or to construct such an example with $\delta_n=o([\mix(G_n)]^{-\alpha})$ for some $\alpha > 1/2$).
\end{remark}

\begin{remark}
It is interesting to note that in all of our examples other than the one in the proof of Theorem \ref{thm: ctslazy}, up to a constant factor, the mixing time of the LSRW is independent of the starting position of the walk (in fact, it is $\Theta(\log |V|)$ in all of our examples).
\end{remark}

\subsection{Open problems}
Let $\{ G_n \}_{n \in \N} $ be a sequence of constant degree vertex-transitive graphs.
\begin{prob}[Re-iterated from \cite{cf:Ding}] Is $\{ G_n \}_{n \in \N} $ robust? 
\end{prob}

\begin{prob}  Assume that $\{ G_n \}_{n \in \N} $  satisfies the product condition. Is it true that the corresponding sequence of LSRWs must exhibit a pre-cutoff? If so, is the pre-cutoff robust? 
\end{prob}
Similarly, one can consider robustness w.r.t.~rough isometries.

\subsection{Notation}
For every $n \in \N$ we denote $[n]:=\{1,2,\ldots,n\}$. For any $a,b \in \R$ we write $a \vee b:=\max (a,b)$ and $a \wedge b:=\min
(a,b)$. Throughout, we use $C,C',C_0,C_1,\ldots$ and $c,c',c_0,c_1,\ldots$ to denote positive absolute constants that may be different from place to place. Given some parameter, say $\epsilon$, we write $C_{\epsilon}$ and $c_{\epsilon}$ for positive constants which depend only on $\epsilon$.
 Upper (resp.~lower) case letters will be used to denote sufficiently large (resp.~small) constants.

\section{Related Constructions}
\label{sec: Related}

As mentioned earlier, Ding and Peres have already constructed a sequence of sensitive bounded degree graphs \cite{cf:Ding}.
More precisely, for all $j \in \N $ they constructed a sequence of bounded degree graphs $G_n=(V_n,E_n)$ for which if some of the edge weights are doubled, then the order of the mixing times increases by a multiplicative factor of order $ \log |V_n|/ \log^{(j)}|V_n|$, where $ \log^{(j)}$ is the iterated logarithm of order $j$ (see \cite[Remark 2.3]{cf:Ding})\footnote{The construction is obtained by iterating the construction of the case $j=2$ and is hence somewhat more involved.}.

Our constructions use a key observation from \cite{cf:Ding}. Namely, we use the fact that the harmonic measure is sensitive under perturbations and that (as explained below) this can lead to sensitivity of mixing-times. This idea was originally used by Benjamini \cite{cf:Benjamini} to study instability of the Liouville property. We note that our construction in the proof of Theorem 2 was greatly influenced by Ding and Peres' construction and is intimately related also to the construction from \cite{Hermon2016} of a sequence of graphs of uniformly bounded degree whose $\ell_{\infty}$ mixing time is sensitive under bounded perturbations.

\medskip

The first construction of a sequence of finite irreducible lazy reversible chains satisfying the product condition,
which
does not exhibit pre-cutoff is due to Pak (private communication through Persi Diaconis, see \cite[Example 18.7]{cf:LPW}). Pak's construction gives a general scheme of constructing such sequences
of Markov chains. Start with a sequence of lazy reversible  chains
$(\Omega_n,P_n,\pi_n)$ which exhibits cutoff ($\pi_n P_n=\pi_n $). Let $\Pi_{n}$ be the
transition matrix whose rows all equal $\pin$. Denote $L_n:=\sqrt{\reln \mixn} $. Then the sequence $(\Omega_n,(1-\frac{1}{L_{n}})P_n+\frac{1}{L_{n}}\Pi_{n},\pi_n) $ satisfies
the product condition but does not exhibit a pre-cutoff.

Loosely speaking, the chain mixes ``at once", at a random time having a Geometric distribution with mean $L_n$, due to the occurrence of one rare event (moving according to $\Pi_{n}$) which (with high probability) occurs before the chain has enough
time to get even slightly mixed otherwise. At first sight, it is surprising that it is possible to construct an example of bounded degree graphs so that the corresponding sequence of LSRWs imitates this behavior. In our examples, mixing  occurs quickly once the chain reaches its ``center of mass", which is an expander. This allows us to reduce the analysis of the mixing time and the occurrence/non-occurrence of cutoff to the easier problem of analyzing the distribution of the
hitting time of the center of mass (namely, the mixing time is roughly equal to its mean, and total variation cutoff is equivalent to it being concentrated around its mean).

\medskip

In the construction from the proof of Theorem \ref{thm sensitivity}, the distribution of the hitting time of the ``center of mass" (starting from the worst starting state) would be  roughly a geometric distribution. Loosely speaking, starting from the worst starting state, until the time  the center of mass is reached, the chain looks like a LSRW on a regular tree whose edges were stretched (i.e.~replaced by a long path whose length tends to infinity) with some ``shortcuts" to the center of mass (see Figures \ref{fig:4}-\ref{fig:7}). The amount and positions of these shortcuts can be chosen so that the center of mass is reached (with high probability) through one of these shortcuts  at a random time having roughly a geometric distribution. Moreover, this can be done so that, under a certain perturbation of the edge weights, the harmonic measure is changed in a manner which makes these shortcuts ``invisible" to the walk. Hence after the perturbation, the walk (starting from the root) is ``trapped" in the tree of stretched edges for a much longer period of time, which results in an increased mixing time.

\medskip

The idea behind the construction of the graphs $G_n$ from Theorem \ref{thm sensitivity} is simple. Start with an arbitrary sequence of constant degree expanders $H_n=(V(H_n),E(H_n))$ (with $|V(H_n)|\to \infty$) whose girth is $\ell_n:=\Theta(\log |V(H_n)|)$ (see e.g.~\cite{cf:Xpq} for the existence of such graphs). We pick some vertex $o \in V(H_n)$ and choose a certain collection of vertices $D$, all within distance $< \ell_n/2 $ from $o$. Finally, for all $d \in D$ we replace each edge along the shortest path between $o$ and $d$ by a path of length $s_n$, where $s_n \to \infty$ and $s_n^2=o(\ell_n/\log \ell_n)$. With some care, the set $D$ can be chosen (in a canonical manner) so that:
\begin{itemize}
\item The asymptotic profile of convergence in total variation of the walk can be understood in terms of the distribution (under $\Pr_o$) of the escape time from the collection of vertices which are incident to the stretched edges.

\item This escape time distribution is ``close" to the Geometric distribution with mean $\Theta(\ell_n)$. However, under a certain (canonical) $o(1)$-perturbation, this escape time distribution becomes concentrated around some time $t_n=\Theta(s_n^2\ell_n)$.
\end{itemize}

In \cite{cf:LS} Lubetzky and Sly gave an explicit construction of 3-regular
expanders, $G_n$, such that the sequence of lazy simple random walks on $G_n$
exhibits total variation cutoff. Our construction in the proof of Theorem \ref{thm sensitivity} resembles their construction (namely, in \cite{cf:LS} they also ``stretch" some of the edges of an expander).

\section{Proof of Theorem \ref{thm: ctslazy}}
\subsection{Preliminaries}
Throughout this section we denote the distribution of the associated lazy (resp.~continuous-time) chain started from $x$ by $\Pr_x$ (resp.~$\mathrm{H}_x $). We denote the transition matrix of the non-lazy (resp.~lazy) version of the chain by $P$ (resp.~$P_{\mathrm{L}}$). We denote the separation distance at time $t$ of the continuous-time and lazy chains by $d_{\mathrm{sep,c}}(t)$ and $d_{\mathrm{sep,L}}(t)$, respectively.

Before presenting the construction for Theorem \ref{thm: ctslazy} we first provide some technical machinery, borrowed from \cite{cf:Hermon}, which shall assist us in analyzing the asymptotic profile of convergence in separation. To characterize the separation time, we introduce a notion of ``double-hitting time".

\begin{definition}
\label{def: convolution}
Given $x,y$ and $z$ in $\gO$. We let $T_{z}^{x,y}$ (resp.~$\tau_z^{x,y} $) denote a random variable obtained by taking the sum 
of two independent realizations of $T_z:=\inf \{t:X_t=z \}$, once under $\mathrm{P}_x$  and once
under $\mathrm{P}_y$ (resp.~$\mathrm{H}_x$ and $\mathrm{H}_y$). More explicitly, we consider realizations of the lazy and continuous-time chains started from $x$ and $y$, denoted resp., by $(X_t^x)$ and $(X_t^{x,\mathrm{c}})$ (resp.~$(X_t^y)$ and $(X_t^{y,\mathrm{c}})$) defined on the same probability space, so that $(X_t^x)$ and $(X_t^y)$ and also   $(X_t^{x,\mathrm{c}})$ and $(X_t^{y,\mathrm{c}})$ are independent. Denote by $T_u^x:=\inf \{t:X_t^x=u \} $ (resp.~$\tau_u^x:=\inf \{t:X_t^{x,\mathrm{c}}=u \} $) the hitting time of state $u$ by $(X_t^x)$ (resp. $(X_t^{x,\mathrm{c}}) $). Define $T_u^y$ and $\tau_u^y $ in an analogous manner. We set \[T_{z}^{x,y}:=T_z^x+T_z^y, \quad \tau_z^{x,y}:=\tau_z^x+\tau_z^y .\] We define \[T_{z,y}^{x}:=T_y^x \ind_{\{T_y^x \le T_z^x \}}+(T_{z}^x+T_z^y)\ind_{\{T_y^x> T_z^x \}} \]  \[\tau_{z,y}^{x}:=\tau_y^x \ind_{\{\tau_y^x \le \tau_z^x \}}+(\tau_{z}^x+\tau_z^y)\ind_{\{\tau_y^x> \tau_z^x \}}.\] 

Finally, we denote the density function of $\tau_z^{x,y}$ by $f_{z}^{x,y}$ and that of the sub-distributions of $\tau_y^x \ind_{\{\tau_y^x \le \tau_z^x \}}$ and $(\tau_{z}^x+\tau_z^y)\ind_{\{\tau_y^x> \tau_z^x \}}$ by $g_{z,y}^{x}$ and $h_{z,y}^x$, respectively.
\end{definition}

The following lemma is a slight variation of Lemma 3.5 from \cite{cf:Hermon}. We present its proof in \S~\ref{s:lem32}  for the sake of completeness.
\begin{lemma}
\label{lem: pathsdecomposition}
Let  $(\Omega,P,\pi)$ be a finite   reversible Markov
chain. Consider $x,y, z
\in \Omega$.
\begin{itemize}
\item[(i)] For all $t \ge 0 $
we have that
\begin{equation}
\label{eq: septhroughz1}
\begin{split}
& P_{\mathrm{L}}^t(x,y)/\pi(y)
 = \sum_{k:k \le t }\mathbb{P}[T_{z,y}^{x}=k,T_y^x \le T_z^x ]P_{L}^{t-k}(y,y)/ \pi(y) \\ & +  \sum_{k:k \le t }\mathbb{P}[T_{z,y}^{x}=k,T_y^x > T_z^x ]P_{\mathrm{L}}^{t-k}(z,z)/ \pi(z)
\ge   \mathbb{P}[T_{z,y}^{x} \le t].
\\  H_t(x,y)/&\pi(y)
 =  \int_{0}^{t}(\frac{g_{z,y}^{x}(s)H_{t-s}(y,y)}{\pi(y)}+\frac{h_{z,y}^{x}(s)H_{t-s}(z,z)}{\pi(z)})ds\ge   \mathbb{P}[\tau_{z,y}^{x} \le t].
\end{split} 
\end{equation}
In particular,
\begin{equation}
\label{eq: septhroughz3}
P_{\mathrm{L}}^t(x,y)/\pi(y) \ge \Pr_x[T_y \le t], \quad H_t(x,y)/\pi(y) \ge \mathrm{H}_x[T_y<t].
\end{equation}
\item[(ii)]
If
$\mathrm{P}_{x}[T_z \le T_y]=1$ (i.e.~if every path from $x$ to $y$ goes through $z$) then for all $t \ge 0$
\begin{equation}
\begin{split}
\label{eq: septhroughz2}
& P_{\mathrm{L}}^t(x,y)/\pi(y)
=\sum_{k:k \le t }\mathbb{P}[T_{z}^{x,y}=k]P_{\mathrm{L}}^{t-k}(z,z)/ \pi(z),\\
& H_t(x,y)/\pi(y)
 = \int_{0}^{t}f_z^{x,y}(s)H_{t-s}(z,z)ds / \pi(z) \\ & \le \mathbb{P}[\tau_{z}^{x,y} \le t]+ t_{\mathrm{rel}} \max_{s} f_z^{x,y}(s) (1-\pi(z))/\pi(z).
\end{split}
\end{equation}
In particular, if $f_y^x$ is the density of the hitting time of $y$ from $x$, then
\begin{equation}
\label{eq: septhroughz4}
H_t(x,y)/\pi(y) \le \h_x[T_y \le t]+  t_{\mathrm{rel}} \max_{s} f_y^{x}(s) (1-\pi(y))/\pi(y).  
\end{equation}
\end{itemize}
\end{lemma}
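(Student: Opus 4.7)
The plan is to start from the basic strong-Markov decomposition obtained by conditioning on the first visit to $\{y,z\}$, and then to use reversibility to turn the ``went through $z$ first'' contribution into a convolution of hitting times matching the definition of $T_{z,y}^{x}$. Concretely, for the lazy chain I would write, by the strong Markov property applied at $T_{y}\wedge T_{z}$,
\[
P_{\mathrm{L}}^{t}(x,y)\;=\;\sum_{k\le t}\mathbb{P}_{x}[T_{y}=k,\,T_{y}\le T_{z}]\,P_{\mathrm{L}}^{t-k}(y,y)\;+\;\sum_{j\le t}\mathbb{P}_{x}[T_{z}=j,\,T_{z}<T_{y}]\,P_{\mathrm{L}}^{t-j}(z,y).
\]
After dividing by $\pi(y)$, reversibility converts $P_{\mathrm{L}}^{t-j}(z,y)/\pi(y)$ into $P_{\mathrm{L}}^{t-j}(y,z)/\pi(z)$; a further strong-Markov step at the hit of $z$ from $y$, followed by reindexing $k=j+s$, identifies the inner sum with $\mathbb{P}[T_{z,y}^{x}=k,\,T_{y}^{x}>T_{z}^{x}]$ by independence of the two trajectories used to define $T_{z,y}^{x}$. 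This yields the equality in \eqref{eq: septhroughz1}; the continuous-time version is the same manipulation with sums replaced by integrals and the sub-densities $g_{z,y}^{x}$ and $h_{z,y}^{x}$.

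For the lower bound in \eqref{eq: septhroughz1}, I would use the spectral facts $P_{\mathrm{L}}^{s}(y,y)/\pi(y)\ge 1$ and $H_{s}(z,z)/\pi(z)\ge 1$: letting $\{f_{i}\}$ be an $L^{2}(\pi)$-orthonormal eigenbasis with $f_{1}\equiv 1$, one has $P_{\mathrm{L}}^{s}(y,y)/\pi(y)=1+\sum_{i\ge 2}f_{i}(y)^{2}\lambda_{i}^{s}\ge 1$ because lazy eigenvalues lie in $[0,1]$, and analogously the heat-kernel eigenvalues $e^{-(1-\lambda_{i})s}$ lie in $(0,1]$. Summing the two sub-probabilities then gives $\mathbb{P}[T_{z,y}^{x}\le t]$ and its continuous-time analogue. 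The consequence \eqref{eq: septhroughz3} follows by specialising $z=y$, which makes the $\{T_{y}^{x}>T_{z}^{x}\}$ branch vacuous and collapses $T_{z,y}^{x}$ to $T_{y}^{x}$.

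For part (ii), the hypothesis $\mathrm{P}_{x}[T_{z}\le T_{y}]=1$ (with $y\neq z$) forces $T_{z}<T_{y}$ almost surely, so the first sum above vanishes and $\mathbb{P}_{x}[T_{z}=j,\,T_{z}<T_{y}]$ upgrades to $\mathbb{P}_{x}[T_{z}=j]$; the remaining convolution is then the full density of $T_{z}^{x,y}$ (and similarly $f_{z}^{x,y}$ for $\tau_{z}^{x,y}$), giving the equality in \eqref{eq: septhroughz2}. For the upper bound I would use the spectral estimate
\[
H_{t-s}(z,z)/\pi(z)\;\le\;1+\frac{1-\pi(z)}{\pi(z)}\,e^{-(t-s)/t_{\mathrm{rel}}},
\]
obtained by extracting the $i=1$ term and bounding $\sum_{i\ge 2}f_{i}(z)^{2}e^{-(1-\lambda_{i})(t-s)}\le e^{-(t-s)/t_{\mathrm{rel}}}\sum_{i\ge 2}f_{i}(z)^{2}$, with the Parseval identity $\sum_{i}f_{i}(z)^{2}=1/\pi(z)$ to evaluate the tail. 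Integrating the ``$1$'' part against $f_{z}^{x,y}$ contributes $\mathbb{P}[\tau_{z}^{x,y}\le t]$, and the exponential remainder is at most $\max_{s}f_{z}^{x,y}(s)\cdot\int_{0}^{t}e^{-(t-s)/t_{\mathrm{rel}}}\,ds\le t_{\mathrm{rel}}\max_{s}f_{z}^{x,y}(s)$, yielding the claimed bound; \eqref{eq: septhroughz4} is then obtained by taking $z=y$ so that $f_{z}^{x,y}$ collapses to $f_{y}^{x}$.

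The whole argument is essentially bookkeeping once the strong-Markov/reversibility split is in hand; the step I would double-check most carefully is the identification of the convolution coming from the $\{T_{z}<T_{y}\}$ branch with the sub-distribution of $T_{z,y}^{x}$ on $\{T_{y}^{x}>T_{z}^{x}\}$ (as opposed to the symmetric object $T_{z}^{x,y}$), together with the analogous continuous-time bookkeeping splitting $f_{z}^{x,y}$ into the pair $(g_{z,y}^{x},h_{z,y}^{x})$. Any error in the statement will almost certainly originate here.
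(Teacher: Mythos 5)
Your proposal is correct and follows essentially the same route as the paper's own proof: the strong Markov decomposition at $T_y\wedge T_z$, reversibility to convert $P_{\mathrm{L}}^{t-j}(z,y)/\pi(y)$ into $P_{\mathrm{L}}^{t-j}(y,z)/\pi(z)$, a second strong-Markov step identifying the convolution with the sub-distribution of $T_{z,y}^{x}$ on $\{T_y^x>T_z^x\}$, the inequality via non-negativity of the eigenvalues of $P_{\mathrm{L}}$ (resp.\ positivity of $e^{-(1-\lambda_i)s}$), and for part (ii) the spectral-gap bound $\int_0^t e^{-(t-s)/t_{\mathrm{rel}}}\,ds\le t_{\mathrm{rel}}$ applied to the excess $\bigl(H_{t-s}(z,z)-\pi(z)\bigr)/\pi(z)$, exactly as in the paper. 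The step you flagged for careful checking is handled identically in the paper, so there is nothing further to repair.
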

The following example, Example \ref{ex:33} (a birth and death chain of size $2n+1$ with a fixed bias towards its middle point), will serve as a gadget in the construction for Theorem \ref{thm: ctslazy}. In Example \ref{ex:33} the state $z$ serves as the center of mass and has a $\Theta(1)$ stationary probability. The chain from Example \ref{ex:33} exhibits cutoff in separation around the expectation of the ``double hitting time" of $z$ from $a$ and $b$.

\begin{example}
\label{ex:33}
Let $\gO:=A \cup B \cup \{z\} $, where $A:=\{a_1,a_2,\ldots,a_n\} $, $B:=\{b_1,b_2,\ldots,b_n\} $. Denote $a_0:=z=:b_0$, $a:=a_n$ and $b:=b_n$. Consider the transition matrix $P(a,a_{n-1})=1=P(b,b_{n-1}) $, $P(a_{i},a_{i-1})=P(b_{i},b_{i-1})=2/3=2P(a_{i},a_{i+1})=2P(b_{i},b_{i+1}) $, for $1 \le i <n$ and $P(z,a_1)=1/2=P(z,b_1)$. Let $\delta>0$. Define
\begin{equation}
\label{eq: tdelta}
t_{\delta}=t_{\del}^{(n)}:=\min \{t: \mathbb{P}[T_z^{a,b} \le t] \ge 2^{-\delta n} \}, \quad \tau_{\delta}=\tau_{\del}^{(n)}:=\min \{t: \mathbb{P}[\tau_z^{a,b} \le t] \ge 2^{-\delta n} \}.
\end{equation}
Using large deviations theory (cf.~\cite[Example 3]{cf:Hermon}), it is not hard to show that $\lim_{n \to \infty} \tau_{\delta}/n$ and $\lim_{n \to \infty} t_{\delta}/n$ both exist. Moreover,  $2 \tau_{\del}<t_{\del} $ (for all sufficiently large $n$) and uniformly in $\delta \in (0,1/2) $, we have that   $\frac{12n-t_{\del}}{\sqrt{\del}n} =\Theta(1)=\frac{6n-\tau_{\del}}{\sqrt{\del}n} $  and 
\begin{equation}
\label{eq: tdel}
\frac{t_{\del}-2 \tau_{\del}}{\sqrt{\del}n}= \Theta(1).
\end{equation} 
\end{example}
Equation \eqref{eq: tdel} shall play a crucial role in the proof of Theorem \ref{thm: ctslazy}. The proof of \eqref{eq: tdel} is deferred to \S~\ref{s:ap64}.
\begin{figure}[!h]
\begin{center}
\includegraphics[height=3cm,width=16cm]{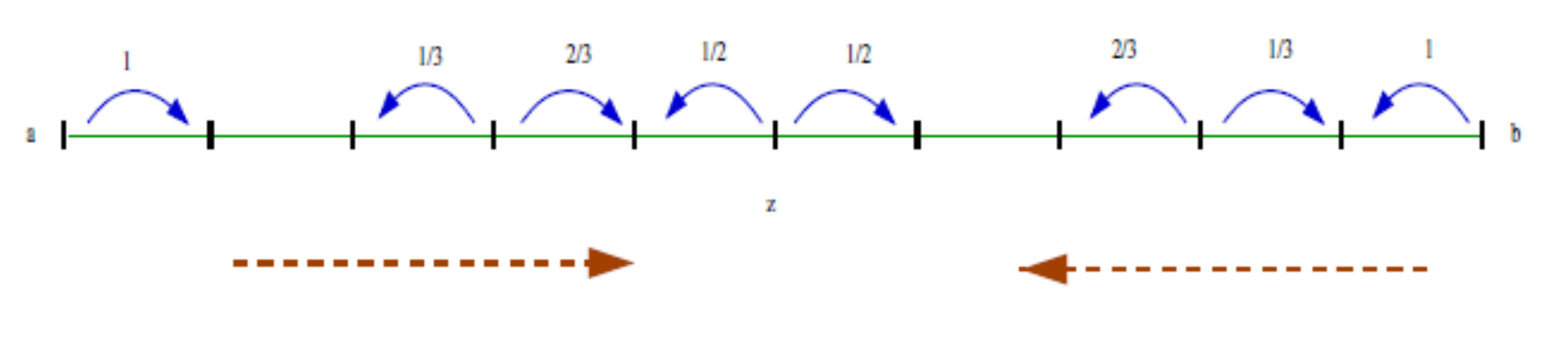}
\end{center}
\caption{\label{fig:basicchain} 
A schematic representation of the transition probabilities for the simple chain from Example \ref{ex:33}.  The endpoints $a$ and $b$ are both of distance $n$ from $z$.    In continuous-time the separation mixing time is twice as large as the total variation mixing time ($3n$ and $6n$, respectively).
The transition probabilities (apart from at the special states $a$, $b$ and $z$) are $2/3$ in the $z$ direction and $1/3$ in the opposite one.}
\end{figure}
\begin{remark}
\label{rem:34}
The reader is encouraged to consider a variant of  Example \ref{ex:33} in which the transition probabilities in are taken to be $1-e^{-n}$ towards $z$ and $e^{-n}$ away from $z$ (and the definitions of $t_{\delta}$ and $\tau_{\delta}$ remain unchanged, as in \eqref{eq: tdelta}).\footnote{The term $e^{-n}$ can be replaced by any other term which is $o(n)$. Effectively, it is as if the transition towards $z$ equals 1, but if we would have defined it to equal 1 then the chain would be reducible.} In this variant $\frac{4n-t_{\del}}{\sqrt{\del}n} =\Theta(1)=\frac{2n-\tau_{\del}}{\sqrt{\del}n} $ and (as in \eqref{eq: tdel})  $\frac{t_{\del}-2\tau_{\del}}{\sqrt{\del}n} =\Theta(1)$ (uniformly in $\delta \in (0,1/2)$), and the proofs of these equations   are essentially reduced to  a comparison of the large deviation behaviors of the Poisson and of the Binomial distributions. Namely, if $s_1=s_1(n,\delta ):=\inf \{t: \mathbb{P}[\mathrm{Pois}(t) \ge 2n] \ge 2^{- \delta n} \} $ and $s_2=s_2(n,\delta ):=\inf \{t: \mathbb{P}[\mathrm{Bin}(t,1/2) \ge 2n] \ge 2^{- \delta n} \}$, then (after changing the transition probabilities as above) we have that $\tau_{\delta}=(1\pm o(1))s_1 $ and  $t_{\delta}=(1\pm o(1))s_2$. For simplicity, instead of analyzing $s_1$ and $s_2$ we now analyze related quantities.  Let $X \sim \mathrm{Pois}(  n) $ and  $Y \sim \mathrm{Bin}(2  n,1/2)$.  Pick $t_1=t_1(n,\delta)$ and $t_2=t_{2}(n,\delta)$ such that $\Pr[X \le t_1 ] = \Theta(2^{-\delta n})= \Pr[Y \le t_2 ]$. Then  uniformly in $\delta \in (0,1/2) $, we have that   $\frac{n-t_1}{\sqrt{\del}n} =\Theta(1)=\frac{n-t_2}{\sqrt{\del}n} $  and $\frac{t_2-t_1}{\sqrt{\delta}n}=\Theta(1) $. While this can be derived directly via a comparison of the large deviation rate functions of the corresponding distributions,  the last equality can also be deduced from the fact that by Poisson thinning if for some $Z \sim \mathrm{Pois}(2n)$ we have that (given $Z$) $X \sim \mathrm{Bin}(Z,1/2)$, then $X \sim \mathrm{Pois}(  n) $ (for further details cf.~the proof of \eqref{eq: tdel}).  
\end{remark}

We now briefly explain how Example \ref{ex:33} will be used as a building block in the proof of Theorem \ref{thm: ctslazy}. 
As noted in \cite[Example 3]{cf:Hermon}, by attaching two birth and death chains (``branches") of length $\Theta(n)$ to $z$ both having the same end-points $z,z'$ with a bias towards $z'$, we can tune the stationary measure of $z$ to become exponentially small in $n$.\footnote{We note that in \cite[Example 3]{cf:Hermon} the roles of $z$ and $z'$ are interchanged compared to their role here.} We pick one of the branches to have a larger average speed than the other. In the example from the proof of Theorem \ref{thm: ctslazy} the slower branch is $C$ and the faster is $D$. For technical reasons, in our construction the end-points of the branches $C$ and $D$ are $z$ and $\bar z$, where $\bar z$ is connected to $z'$ through a biased birth and death chain ($E$) with a fixed bias towards $z'$ (see Figure \ref{fig:ex3}). Using ideas from \cite{cf:Hermon} we can tune simultaneously\footnote{The fact that we can tune them simultaneously is subtle and crucial.} both the stationary measure of $z$ (we will take it to be $\Theta(2^{-\delta n})$, for some $\delta \in (0,1/8) $) and the time it takes to reach $z'$ from $z$ along each branch. For each branch, if we condition on reaching $z'$ through that branch, the (conditional) distribution of the hitting time of $z'$ becomes concentrated. 

If the time it takes the chain to reach $z'$ from $z$ along the slow branch is sufficiently small, then the lazy and continuous-time chains would exhibit separation cutoff  around the time $t_n$ (resp. $\tau_n$) in which $\mathbb{P}[T_z^{a,b} \le t_{n}] = \Theta (\pi (z)) $ (resp.~$\mathbb{P}[\tau_z^{a,b} \le \tau_n] = \Theta (\pi (z))$). Indeed this is proven in \cite{cf:Hermon} Example 3, by exploiting the equality in \eqref{eq: septhroughz2} above and analyzing the distribution of $T_z^{a,b}$ in the large deviation regime. Note that because we will have that  $\pi(z)= \Theta(2^{-\delta n})$, the times $t_n$ and $\tau_n$ coincide (up to smaller order terms) with $t_{\del}$ and $\tau_{\del}$ (respectively) from \eqref{eq: tdelta}.

The significance of \eqref{eq: tdel} is that after modifying Example \ref{ex:33} as described above  so that   $\pi(z)= \Theta(2^{-\delta n})$, we get by \eqref{eq: tdel}  that  $\liminf_{n \to \infty} t_n/\tau_n=\liminf_{n \to \infty} t_{\delta}/\tau_{\delta} > 2 $ (where $t_n$ and $\tau_n$ are defined in the previous paragraph and  $t_{\del}$ and $\tau_{\del}$ in \eqref{eq: tdelta}). This  allows us to ruin separation cutoff for the continuous-time chain while maintaining it for the lazy chain, by picking the average speed along the two branches wisely, so that the hitting time of $z'$ (for the continuous-time chain), starting from $a$, is with a constant probability between $\tau_n+\eps n $ and $t_n/2$ (for some $\eps \in (0,\frac{1}{n}(\frac{t_n}{2}-\tau_n) ) $) and with the complement probability (up to negligible terms) is smaller than $\tau_n $. In other words, we exploit  \eqref{eq: tdel} 
to construct a Markov chain such that the following hold:
\begin{itemize}
\item[(1)]
 In terms of the separation distance it suffices to consider the case that the initial state is $a$. 
\item[(2)] The state $y$ which up to $o(1)$ terms minimizes $H_t(a,y)/\pi(y)$ and $P_{\mathrm{L}}^{2t}(a,y)/\pi(y)$   is different for $t$ lying in some (sufficiently large) time interval. 
\item[(3)]
For the continuous-time chain the worst state (i.e.\ the minimizer from (2))  is $z'$ for every $t$ in the aforementioned time interval.   Moreover, we will show that $H_t(a,z')=\h_{a}[T_{z'}>t]\pm o(1) $ for all $t$. For the sake of simplicity let us assume that the aforementioned time interval is $(0,\infty)$. In this case, the existence of two parallel branches of different average speeds through which $z'$ can be accessed prevents cutoff for the sequence of continuous-time chains. 
\item[(4)] For the discrete-time lazy chain we have that  $\min_{y} P_{\mathrm{L}}^{t}(a,y)/\pi(y)=P_{\mathrm{L}}^{t}(a,b)/\pi(b) + o(1)$ for all $t$. Moreover, we will show that $P_{\mathrm{L}}^{t}(a,b)/\pi(b)  $ exhibits an abrupt transition from $o(1)$ to at least $1-o(1)$ around time $t_{\delta}$ and hence the sequence of lazy chains exhibits cutoff around time $t_{\delta}$.
\item[(5)] The mechanism which allows us to construct an example with such behavior is that while we always have that  $ \Pr_a(T_{z'} >2 t)= \h_a(T_{z'} > t) \pm o(1) $, it is not the case that $\frac{1}{2}  t_{\delta}=\tau_{\delta}(1 \pm o(1))$. 
\end{itemize}

\subsection{Proof of Theorem \ref{thm: ctslazy}} Below we intentionally omit all ceiling and floor signs and suppress the dependence on $n$ of some quantities, for the sake of notational convenience. Fix some $0<\epsilon < \del<1/8$ and $2 \le s \in \N$ so that
\begin{equation}
\label{eq: deldel's}
3(1+ \frac{\delta}{2}s^{2})n +4\eps n \le \tau_{\del}+2\eps n <  3(1+\frac{\delta}{2}(1+ s^2))n<   \frac{1}{2}  t_{\delta} - 2\eps n,
\end{equation}
for every sufficiently large $n$, where $t_{\delta}$ and $\tau_{\delta}$ are as in \eqref{eq: tdelta}. Note that by \eqref{eq: tdel} we can find such $\eps=\eps(\del) $ and $s=s(\del)$, provided that $\del$ is sufficiently small. We also note that the leftmost inequality in \eqref{eq: deldel's} is not as important as the other two.\footnote{Even if  $3(1+ \frac{\delta}{2}s^{2})n  \ge \tau_{\del} $ only minor changes to the proof below are needed. Namely, instead of arguing that  $d_{\mathrm{sep},c}^{(n)}(\tau_{\del} + \eps n/2) $ is bounded away from 1,  one would have had to argue that $d_{\mathrm{sep},c}^{(n)}(3(1+ \frac{\delta}{2}s^{2})n + \eps n/2)  $  is bounded away from 1.}

\medskip

Take $\gO:=A \cup B \cup C \cup D \cup E \cup \{z,\bar z,z' \}$, where  $A:=\{a_1,a_2,\ldots,a_n=a\} $, $B:=\{b_1,b_2,\ldots,b_n=b\} $, $D:=\{d_1,\ldots,d_{\delta n /2} \}$, $E:=\{e_1,\ldots,e_{\delta n /2} \}$ and $C:=\{c_{i,j}: 1 \le i \le  \del n/2, 1 \le j \le s  \} $ (see Figure \ref{fig:ex3}). Before specifying the transition probabilities we specify some properties that we want the construction to satisfy. The restriction of the chain to $A \cup B \cup \{z\}$ is precisely the chain from Example \ref{ex:33}, where also here the states 
 $a$ and $b$ are the end-points. The state $z'$ serves as the center of mass of the chain (i.e.\ $\pi(z')=\Theta(1)$).
Below we essentially show that for both the continuous-time and the discrete-time chains it suffices to consider the case that the initial state is $a$ and that started from $a$ the only two other relevant states for mixing in separation are $b$ and $z$. More precisely, below (combining \eqref{eq: potentialmin}-\eqref{eq: ab} with the analysis of the four cases in the proof of \eqref{eq: potentialmin}) we show that (uniformly in $t$)
\begin{equation}
\label{e:dsl}
d_{\mathrm{sep},\mathrm{L}}^{(n)}(t)=\max \{ \Pr_a(T_{z'} > t), 1-\frac{P_{\mathrm{L}}^t(a,b)}{\pi (b)} \} \pm o(1), \end{equation}
\begin{equation}
\label{e:dsc}
d_{\mathrm{sep,c}}^{(n)}(t)=\max \{ \h_a(T_{z'} > t), 1-\frac{H_t(a,b)}{\pi (b)} \} \pm o(1), \end{equation} and that $\frac{P_{\mathrm{L}}^t(a,b)}{\pi (b)}$ (resp.\ $\frac{H_t(a,b)}{\pi (b)}$) exhibits an abrupt transition from $o(1)$ to at least $1-o(1)$  around time $t_{\delta}$ (resp.\ $\tau_{\delta} $). Conversely, due to the existence of  two parallel branches $C$ and $D$ through which $z'$ can be accessed we have that neither $ \Pr_a(T_{z'} > t)$ nor $ \h_a(T_{z'} > t)$ exhibit an abrupt transition as functions of $t$. Namely, they decrease from $1-o(1)$ to $o(1)$ via two drops occurring for the continuous-time chain around times $3(1+\delta)n $ and $3(1+\frac{\delta}{2}(1+ s^2))n$ (and for the lazy chain around times  $6(1+\delta)n $ and $6(1+\frac{\delta}{2}(1+ s^2))n$).    


  The sets $C$ and $D$ serve as two parallel branches with different average speeds (as in the discussion following Remark \ref{rem:34}) connecting $z$ to $\bar z$, which in turn is connected to the center of mass, $z'$, via the segment $E$ (see Figure \ref{fig:ex3}). In order to ensure that the average speed along $C$ is slower than along $D$ we subdivide its edges into paths of length $s$ (see Figure \ref{fig:ex3}).\footnote{We could have simply increased the holding probability along the vertices of the branch $C$. However, in order to make the details behind Remark \ref{rem:11} more transparent we choose  to subdivide its edges instead.} The term  $3(1+\frac{\delta}{2}(1+ s^2))n $ in \eqref{eq: deldel's} corresponds to the time around which the hitting time of $z'$ (for the continuous-time chain, started from either $a$ or $b$) is concentrated, given that $z'$ is reached through the slow branch $C$ (this is explained in more details below).  The aforementioned roles of the  terms in \eqref{eq: deldel's} (described over the last two paragraphs, other than that of the term   $3(1+ \frac{\delta}{2}s^{2})n$, whose significance can be seen from \eqref{yinC1} below) together with \eqref{e:dsl}-\eqref{e:dsc}   motivate \eqref{eq: deldel's}. 

 Indeed, by \eqref{eq: deldel's}  (and the aforementioned role of the term $3(1+\frac{\delta}{2}(1+ s^2))n $) we have  $\Pr_a(T_{z'} > t_{\delta} - 2\eps n)=o(1) $, and so by \eqref{e:dsl} we have that  $d_{\mathrm{sep},\mathrm{L}}^{(n)}(t)= \max\{0, 1-\frac{P_{\mathrm{L}}^t(a,b)}{\pi (b)} \} \pm o(1)$, which as mentioned below \eqref{e:dsc} exhibits an abrupt change from $1-o(1)$ to $o(1)$ around time $t_{\del}$. Thus the lazy chains exhibit separation cutoff around time $t_{\del}$. Similarly, by \eqref{eq: deldel's}  \[\h_a(T_{z'} > \tau_{\del}+2 \eps n ) \ge \h_a(z' \text{ is reached through the branch }C)-o(1),\] 
 \[\h_a(T_{z'} > \tau_{\del}+\eps n ) \le \h_a(z' \text{ is reached through the branch }C)+o(1),\]
which (as will be made clear) by construction is bounded away from 0 and 1 (where we say that $z'$ is reached through $C$ if the last state visited before $T_{\bar z}$ is in $C$, see Figure \ref{fig:ex3}). Hence by \eqref{e:dsc} and the  comment following it, for $t \in [\tau_{\del}+ \eps n ,\tau_{\del}+2 \eps n ]$ we have that $d_{\mathrm{sep,c}}^{(n)}(t)= \h_a(T_{z'} > t)\pm o(1)$ is bounded away from 0 and 1. Thus the continuous-time chains do not exhibit separation cutoff.   

The lengths of the branches $C$ and $D$ (and also of the interval $E$) are taken so that  $ \pi(z)= \Theta(2^{-\delta n})$. Note that this means that $t_{\delta} $ and $\tau_{\delta}$ agree (up to negligible terms) with  $t_n$ and $\tau_n$, respectively, from the discussion following Remark \ref{rem:34}. In light of the discussion following Remark \ref{rem:34}, this explains why (as mentioned above)  $\frac{P_{\mathrm{L}}^t(a,b)}{\pi (b)}$ (resp.~$\frac{H_t(a,b)}{\pi (b)}$) exhibits an abrupt transition around $t_{\delta}$ (resp.~$\tau_{\delta} $).\footnote{While the abrupt transition of $\frac{P_{\mathrm{L}}^t(a,b)}{\pi (b)}$ around time $t_n$ essentially follows from the analysis in \cite[Examples 3 and 5]{cf:Hermon}, we will  prove it and the abrupt transition of $\frac{H_t(a,b)}{\pi (b)}$ around time $\tau_n$ for the sake of completeness.   }  We now specify the edge weights\footnote{We write the edge weights (instead of the transition probabilities, which are given in Figure \ref{fig:ex3})   in order to demonstrate that the chain is indeed reversible and to facilitate the calculation of $\pi(z)$.} and introduce some additional notation (for a schematic representation of the transition probabilities see Figure \ref{fig:ex3}). 
\begin{itemize}
\item
 $a:=a_n$,  $b:=b_n$ (the states 
 $a$ and $b$ have symmetric roles in our construction). 
\item
For all $i$, $c_{i+1,0}:=c_{i,s}$. 
\item
  $a_0=b_{0}:=z=d_{1+\delta n/2}$ and identify $c_{\delta n/2,s }$ with $z$.
\item
 $c_{1,0}:=\bar z=:e_{1+\delta n /2}=d_0  $ and $e_0:=z'$.
\end{itemize}

Consider the following (symmetric) edge weights:
\begin{itemize}
\item
$w(b_i,b_{i-1})=2^{-i}=w(a_i,a_{i-1}) $ for all $1 \le i \le n $.
\item
$w(c_{i,j},c_{i,j-1})=2^{\del n/2-i}$ for all $1 \le i \le \delta n/2$, $1 \le j \le s$.
\item
$w(d_i,d_{i+1})=2^{\del n/2-i}$ for all $0 \le i \le \delta n /2 $.
\item
$w(e_i,e_{i+1})=2^{\delta n -i}$ for all $0 \le i \le \delta n/2 $. 
\end{itemize}

\begin{figure}[!h]
\begin{center}
\includegraphics[height=8cm,width=12cm]{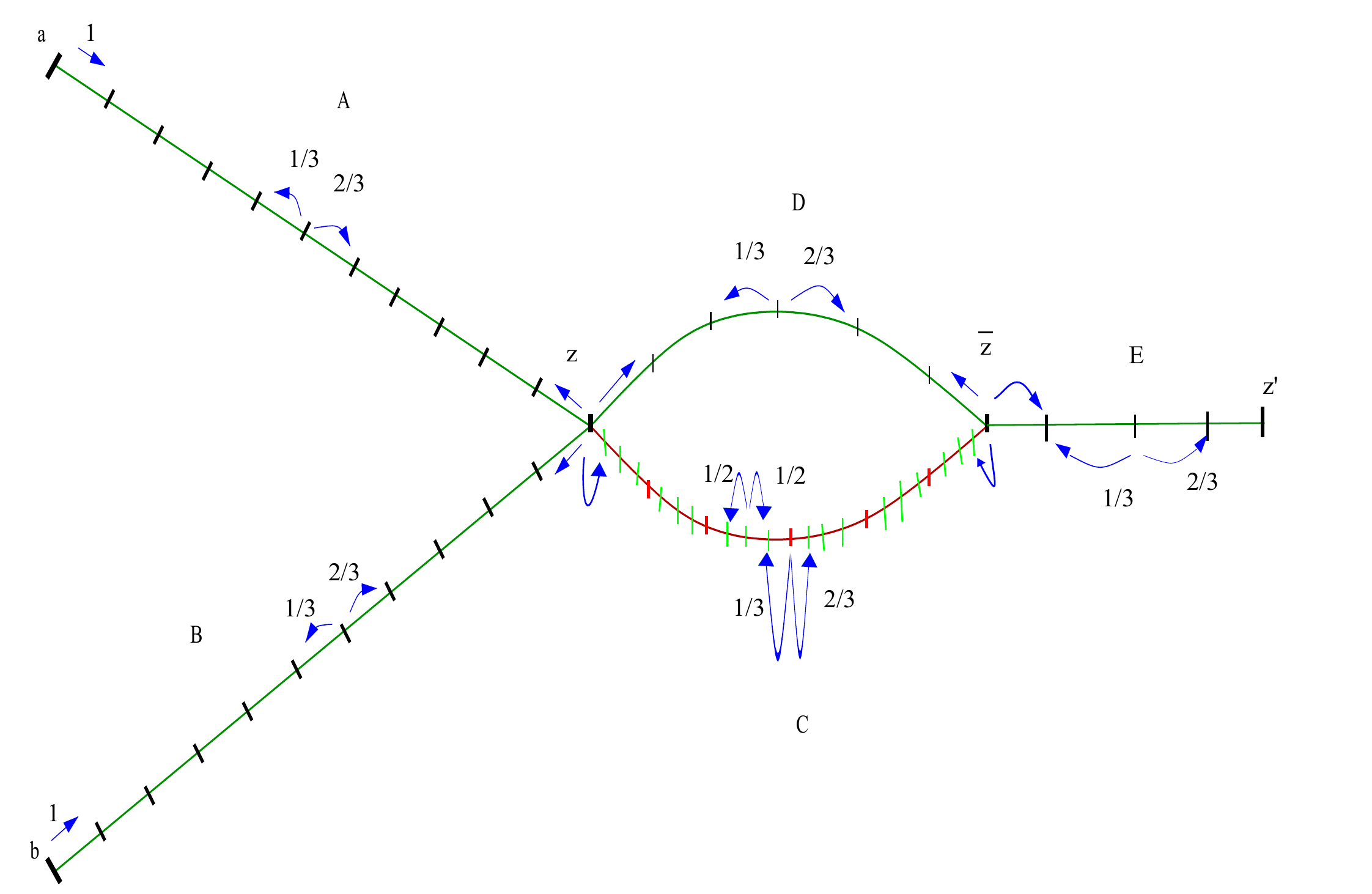}
\end{center}
\caption{\label{fig:ex3} 
A schematic representation of the transition probabilities for the chain from Theorem \ref{thm: ctslazy}. The sets $A$ and $B$ are of size $n$ and the sets $D$ and $E$ are of size $\delta n/2$. Along the branch $C$ there are $\delta n/2$ red points (with transitions probabilities $1/3$ to the left and $2/3$ to the right). Between each 2 red points there are $s-1$ yellow points, from each of which the chain moves to one of its 2 neighbors with equal probability. Also from $z$ and $\bar z$ the transitions are of equal probability to each of their neighbors. For a motivation behind the construction see the discussion following Remark \ref{rem:34} and the one preceding the specification of the edge weights of the chain.}
\end{figure}

Note that the restriction of the chain to $D$ is a birth and death chain with a bias towards $\bar z$ and an average speed of $1/3 $ ($1/6$ for the lazy chain), while its restriction to $C$ can be described as follows: first take the same birth and death chain as $D$ and then ``stretch" each edge $(c_{i+1},c_{i})$ by a factor $s$ by replacing it by a path of $s$ edges, $(c_{i,s},c_{i,s-1})$, \ldots, $(c_{i+1,1},c_{i,0})$, of the same weight as  $(c_{i+1},c_{i})$. It is not hard to see that this results in an average speed of $1/(3s^2)$ along $C$ towards $\bar z$ (for the lazy chain the speed is $1/(6s^2)$).

Note that started from $a $ the chain may reach $\bar z $ (and thus also $z'$) either through the branch $C$ or $D$ (i.e.~the last state to be visited prior to $T_{\bar z}$ may be either in $D$ or in $C$). Started from $a$, conditioned on taking the branch $C$ (in the above sense), the hitting times of $\bar z $ and $z'$ (for the continuous-time chain) are concentrated around $3(1+ s^2\delta /2)n$ and $3(1+\frac{\delta}{2}(1+ s^2))n $, resp., while conditioned on taking the branch $D$, they are concentrated around $3(1+\frac{\delta}{2})n$ and $3(1+\delta )n $, resp.. From this, we get that there exists some constant\footnote{Indeed $c$ is bounded away from 0, since  $s$ is fixed.} $c=c(s)=\Theta(1/s) $ such that for all sufficiently large $n$
\begin{equation}
\label{yinC1}
\forall y \in C \cup D \cup E \cup \{\bar z ,z'\}  , \quad  \h_a[T_{y} \le \left( 3(1+\frac{\delta}{2} s^2)+\eps \right) n ] \ge c.
\end{equation} 
\begin{equation}
\label{yinC2'}
\h_a[T_{z'} \le \left( 3(1+\frac{\delta}{2}(1+ s^2))- \eps  \right)n] \le 1-c.
\end{equation} 
\begin{equation}
\label{yinC3}
\forall y \in C \cup D, \quad  \mathbb{P}[T_{\bar z,y}^{a} \le \left( 6(1+\frac{\delta}{2}(1+ s^2))+\eps \right) n ] = 1-o(1).
\end{equation} 
\begin{equation}
\label{yinC4}
\forall y \in E \cup \{\bar z ,z'\}, \quad \Pr_a[T_{y} \le \left( 6(1+\frac{\delta}{2}(1+ s^2))+\eps \right) n ] = 1-o(1).
\end{equation}

Note that (since $s$ is fixed)
\begin{equation}
\label{piz}
\pi(z)=\Theta(2^{- \delta n}).
\end{equation}

 We now argue that
\begin{equation}
\label{eq: potentialmin}
\begin{split}
& \limsup_{n \to \infty} \sup_t |d_{\mathrm{sep,c}}^{(n)}(t)-\max \{ 0, 1- \min_{y \in \{b,z' \}}H_t(a,y)/\pi(y)\}  |=0,
\\ & \limsup_{n \to \infty} \sup_t |d_{\mathrm{sep,L}}^{(n)}(t)-\max \{ 0, 1- \min_{y \in  \{b, z' \}}P_{\mathrm{L}}^{t}(a,y)/\pi(y) ]\} |=0.
\end{split}
\end{equation} 
\begin{equation}
\label{eq: ab}
\begin{split}
& \lim_{t' \to \infty} \liminf_{n \to \infty} H_{\tau_{\del}+t'}(a,b)/\pi(b) = \infty, \quad \lim_{t' \to \infty} \limsup_{n \to \infty} H_{\tau_{\del}-t'}(a,b)/\pi(b) =0,
\\ & \lim_{t' \to \infty} \liminf_{n \to \infty} P_{\mathrm{L}}^{t_{\del}+t'}(a,b)/\pi(b) = \infty, \quad \lim_{t' \to \infty} \limsup_{n \to \infty}P_{\mathrm{L}}^{t_{\del}-t'}(a,b)/\pi(b) =0.
\end{split}
\end{equation}
These equations are essentially borrowed from the analysis of Examples 3 and 5 in \cite{cf:Hermon}. For the sake of completeness we present a sketch of their proofs (all of the missing steps can be found at \cite{cf:Hermon}). We now prove \eqref{eq: potentialmin}. We only prove the first line of \eqref{eq: potentialmin} as the second line is proved in a similar fashion. The proof of \eqref{eq: ab} is contained within the analysis of Case 1 below. 

By \eqref{eq: septhroughz4} with $(x,y)=(a,z')$  (and noting that $\reln = \Theta(1)= \pi(z') $ while, in the notation of \eqref{eq: septhroughz4}, $\lim_{n \to \infty} \max_t f_{z'}^{a}(t)=0 $ (cf.~\cite[Remark 3.9]{cf:Hermon}), where $z'=z'(n)$ and $f_{z'}^a=(f_{z'}^a)^{(n)} $)
\begin{equation}
\label{eq: atoz'}
|H_t(a,z')/\pi(z') - \h_a[T_{z'} \le t]|=o(1), 
\end{equation}
uniformly in $t$.
Denote $F:=C \cup D \cup E\cup \{z,\bar z,z' \} $. By symmetry between $A$ and $B$ it suffices to consider each of the following cases: (Case 1) $(x,y) \in A \times B $, (Case 2) $(x,y) \in A \times A $, (Case 3) $(x,y) \in A \times F $ and (Case 4) $(x,y) \in F\times F $ (where $(x,y)$ may depend on $n$). 

For Case 2, if $(x,y)=(a_i,a_j)$ we may assume w.l.o.g.\ that $i>j$ (since $H_t(x,y)/\pi(y)=H_t(y,x)/\pi(x)$). Using \eqref{eq: septhroughz3} we get that for all $t \ge 3n+n^{2/3} $ we have that $1-H_t(x,y)/\pi(y) \le \h_x[T_x \le y]=o(1)$ while by \eqref{eq: atoz'} we have that $H_{n(3+\delta/2)}(a,z')/\pi(z')=o(1)$ (hence we may neglect Case 2).\footnote{The exponent $2/3$ can be replaced by an arbitrary exponent in $(1/2,1)$.}

For Case 4  it is not hard to see that for $j(n):=n\frac{\delta}{2}(s^2+1)+n^{2/3}  $ we have
\[ \max \{ \mathbb{P}[\tau_{\bar z,y}^{x} \le j(n)],\mathbb{P}[\tau_{\bar z,x}^{y} \le j(n)]  ,\h_x[T_y \le j(n)], \h_y[T_x \le j(n)] \} =1-o(1) \] 
(uniformly in all $(x,y)$ as in Case 4). Thus by Lemma \ref{lem: pathsdecomposition}, for all $t \ge j(n)$ we have that $1-H_t(x,y)/\pi(y)=o(1) $, uniformly
in all $(x,y)$ as in Case 4.  

We now consider Case 3. For all $x \in A$, if $y \in C \cup D$ then by \eqref{eq: septhroughz1} we have that \[H_t(x,y)/\pi(y) \ge \max \{ \mathbb{P}[\tau_{\bar z,y}^x \le t] ,  \mathbb{P}[t-1 \le \tau_{\bar z}^{x,y} \le t] /[e \pi(z) ] \} \] and if $y \in E \cup \{z,\bar z ,z' \} $ we have \[ H_t(x,y)/\pi(y) \ge \h_x[T_y \le t] .\] By \eqref{eq: septhroughz4} (as in \eqref{eq: atoz'}) $|H_t(x,z')/\pi(z') - \h_x[T_{z'} \le t]|=o(1)$ uniformly for all $x \in A$ and $t \ge 0$. It is thus not hard to see that for each $x \in A$ the worst $y \in F$ (in the sense of minimizing $\min \{ H_t(x,y)/\pi(y),1 \} $ for all $t$, up to $o(1)$ additive terms)  is $y= z'$ and that for each fixed $t$ the worst $x \in A$ w.r.t.~$y=z'$ (at least up to $o(1)$ additive terms) is $x=a$.   

Finally, for Case 1 note that by \eqref{eq: septhroughz1} we have that \begin{equation}
\label{eq: t*xy} \frac{\mathbb{P}[t-1 \le \tau_z^{x,y} \le t]}{e\pi(z)} \le H_{t}(x,y)/\pi(y) \le \mathbb{P}[\tau_z^{x,y} \le t]/\pi(z).   \end{equation}

Using the fact that the law of $\tau_z^{x,y}  $ is a convolution of Exponential distributions and hence log-concave (cf.~the analysis of Example 5 in \cite[Section 6.5]{cf:Hermon}), it follows that for all $(x,y) \in A \times B$ we have that $H_{t}(x,y)/\pi(y)  $ exhibits the following behavior   around  \[t_{x,y}:=\inf \{t: \mathbb{P}[t-1 \le \tau_z^{x,y} \le t] \ge \pi(z)/4 \}.\] For every $\alpha \in (0,1)$ if $t \le (1-\alpha)\mathbb{E}[\tau_z^{x,y} ]$, there exists some $C_{\alpha}$ such that (uniformly in all $(x,y)$ as in Case 1)  \[\mathbb{P}[\tau_z^{x,y} \le t] \le C_{\alpha} e^{-1} \mathbb{P}[t-1 \le \tau_z^{x,y} \le t]  \] for some constant $C_{\alpha}>0$ (this uses the aforementioned log-concavity of $\tau_z^{x,y}$ (cf.~\cite[Section 6.5]{cf:Hermon}) and so by \eqref{eq: t*xy} (and the fact that $t_{x,y}< (1-\alpha)\mathbb{E}[\tau_z^{x,y} ]$ for some $\alpha>0$ for all sufficiently large $n$) there exist a constant $C>0$ such that 
\[\frac{\mathbb{P}[t-1 \le \tau_z^{x,y} \le t]}{e\pi(z)} \le H_{t}(x,y)/\pi(y) \le C \frac{\mathbb{P}[t-1 \le \tau_z^{x,y} \le t]}{e\pi(z)} .  \]
Again, using  log-concavity and the fact that $t_{x,y}$ by definition is in the large deviation regime of $\tau_z^{x,y} $, such that for some constants $c,C'>0$  for each fixed $k \in \Z$ we have that $e^{ck} \le \frac{\mathbb{P}[t_{x,y}+k-1 \le \tau_z^{x,y} \le t_{x,y}+k]}{\mathbb{P}[t_{x,y}-1 \le \tau_z^{x,y} \le t_{x,y}]} \le e^{C'k} $ for all sufficiently large $n$ (cf.~\cite[Section 6.5]{cf:Hermon}). Hence
 \[ \lim_{t' \to \infty} \liminf_{n \to \infty}\min_{(x,y) \in A \times B} H_{t_{x,y}+t'}(x,y)/\pi(y) = \infty, \] \[ \lim_{t' \to \infty} \limsup_{n \to \infty} \max_{(x,y) \in A \times B} H_{\max \{t_{x,y}-t',0\}}(x,y)/\pi(y) =0. \]
This in particular establishes the first line of \eqref{eq: ab} (the proof of the second line is analogous, where Exponential distributions above are replaced by Geometric distributions (see  \cite[Section 6.5]{cf:Hermon}). Moreover, the same reasoning as in \cite[Section 6.5]{cf:Hermon} yields that $H_{t}(x,y)/\pi(y)$ is increasing in $[0,t^{*}(x,y)]$, for some $t^{*}(x,y)$  satisfying $|t^{*}(x,y)-\mathbb{E}[\tau_z^{x,y}]| \le C_2 \sqrt{n} $, and that for all $t \ge t^{*}(x,y) $ we have that $ H_{t}(x,y)/\pi(y) \ge 1-o(1)  $ (uniformly in $(x,y) \in A \times B$ and $t \ge t^*(x,y)$). Finally, to conclude the analysis of Case 1 we note that $\max_{(x,y) \in A \times B}t^*(x,y)=t^*(a,b)$. 

\medskip

We are now in a position to conclude the proof. We first argue that the sequence of lazy chains exhibits separation cutoff around time $t_{\delta} $. Indeed,  by \eqref{eq: septhroughz1}-\eqref{eq: septhroughz3} in conjunction with \eqref{yinC3}-\eqref{yinC4} we have  
 $\max_{y \in  C \cup D \cup E \cup  \{z, \bar z, z' \} }\sup_{t \ge \left( 6(1+\frac{\delta}{2}(1+ s^2))+\eps \right) n} (1-\frac{P_{\mathrm{L}}^{ t}(a,y)}{\pi(y)} )=o(1)$. By \eqref{eq: deldel's} $\left( 6(1+\frac{\delta}{2}(1+ s^2))+\eps \right) n \le t_{\delta}-\epsilon n $ 
 and thus  by combining \eqref{eq: potentialmin} and \eqref{eq: ab},  we obtain that $\limsup_{n \to \infty} \sup_t |d_{\mathrm{sep,L}}^{(n)}(t)-\max(0, 1- P_{\mathrm{L}}^{t}(a,b)/\pi(b) ]) |=0$ and that $\max(0, 1- P_{\mathrm{L}}^{t}(a,b)/\pi(b) ]) $ exhibits a sharp transition around time $t_{\delta}$.

We now consider the continuous-time chain. By \eqref{eq: potentialmin}, \eqref{eq: ab}, \eqref{yinC1} combined with \eqref{eq: septhroughz1}, \eqref{eq: septhroughz3} and \eqref{eq: deldel's}  
 we get that $d_{\mathrm{sep},c}^{(n)}(\tau_{\del} + \eps n/2) $ is bounded away from 1 uniformly in $n$ (using the fact that the separation distance is non-increasing in time). Denote $m_n:=  3(1+\frac{\delta}{2}(1+ s^2))n - \eps n $. By \eqref{eq: deldel's}, $\tau_{\del}+\eps n < m_{n} $. Thus the proof is concluded by applying \eqref{eq: atoz'} with $t=m_n$, in order to deduce  by \eqref{yinC2'} that $$ d_{\mathrm{sep},c}^{(n)}(m_{n}) \ge 1-H_{  m_{n}}(a,z')/\pi(z') \ge \h_a[T_{z'} > m_n ]-o(1) \ge c-o(1). $$   \qed    

\subsection{Proof of Remark \ref{rem: lazypqintro}}
\label{sec: lazypq}
We now explain the necessary adaptations for the proof of the assertion of Remark \ref{rem: lazypqintro}.
The only adjustments are in the choices of $\delta$ and $s$ (and possibly, one has to contract some of the $s$-paths along the slow branch $C$ to a single edge in order to adjust the expected time it takes the walk to cross the slow branch). Fix $\alpha \in (0,1)$. Consider the $\alpha$-lazy version of the chain. Denote by $\Pr_x^{(\alpha)} $ the law of the $\alpha$-lazy version of the chain and by $T_{x'}^{x,y}(\alpha) $ the version of $T_{x'}^{x,y}$ corresponding to holding probability $\alpha$. Note that
under the aforementioned modifications to the chain (i.e.~adjusting $\del$ and $s$) the transition probabilities along the $A$ and $B$ segments are unaffected and so (for each fixed $\alpha$) the law of $T_{z}^{a,b}(\alpha) $ is also unaffected.
 
Let $\delta>0$ to be determined later. The aforementioned modifications can be made so that we still have that $\pi(z)=\Theta(2^{-\delta n})$. Denote the separation distance at time $t$ of the $n$th $\alpha$-lazy chain by  $d_{\mathrm{sep},\alpha}^{(n)}(t)$. 
Let $\kappa n $ be the expected hitting time of $z$ from $\bar z$ for the non-lazy chain, conditioned on taking the slow branch $C$. Similarly to the analysis in the proof of Theorem \ref{thm: ctslazy}, as long as $\delta$ is taken to be sufficiently small, and $\kappa$ is chosen in an appropriate manner\footnote{More precisely, this is the case whenever $\frac{3(1+\max( \delta ,\kappa))n }{1-\alpha}< t_{\delta}^{(\alpha)}-\epsilon n $ for some $\epsilon >0$, where $ t_{\delta}^{(\alpha)}$ is defined in the following paragraph. We leave this as an exercise.}, one can replace $d_{\mathrm{sep},\alpha}^{(n)}(t) $ by $\max \{ \Pr_a^{(\alpha)}(T_{z'} > t), 1-\frac{\Pr_a^{(\alpha)}(X_t=b)}{\pi (b)} \} \pm o(1) $.

 Let $t_{\delta}^{(\alpha)}:=\inf \{t: \mathbb{P}[T_{z}^{a,b}(\alpha) \le t ] \ge 2^{- \delta n} \}$. Similarly to the analysis in the proof of Theorem \ref{thm: ctslazy}, the quantity $1-\frac{\Pr_a^{(\alpha)}(X_t=b)}{\pi (b)}$ exhibits a sharp transition around $t_{\delta}^{(\alpha)}$. Using the aforementioned modifications we can ensure that $\Pr_a^{(\alpha)}(T_{z'} \le t) $  exhibits two jumps, one around $\frac{3(1+\delta)n }{1-\alpha}$ and a second jump around $s^{(\alpha)}:= \frac{3(1+\delta /2+\kappa)n }{1-\alpha}$. It follows that the sequence of $\alpha$-lazy chains exhibits separation cutoff iff $t_{\delta}^{(\alpha)}\ge (1-o(1)) s^{(\alpha)}  $.  

 Using the analysis from \cite[Example 3 and Lemma 5.1]{cf:Hermon} one can derive the following formula for $\Psi_{\alpha} $, the large deviation rate functions of $T_{z}^{a,b}(\alpha) $, which is given by the following Legendre transform (the exact formula shall  be used only to make a certain comment at the end of the analysis)
 \begin{equation}
  \Psi_{\alpha}(r):=\sup_{\gl\in (-\infty,\infty)} \left[ \gl r- \log \tf_{\alpha}(\gl) \right],
 \end{equation}
 where for $\Delta_{\alpha}(\lambda):=(e^{-\gl}-\alpha)^2-\frac{4(1-\alpha)}{3} $ and $\gl_{\alpha}$, the smaller solution to $\Delta_{\alpha}(\lambda)=0 $, we have that
 \begin{equation*}
  \tf_{\alpha}(\gl):=\begin{cases}
             \infty   & \text{if } \gl >\gl_{\alpha}),\\
           \frac{3}{2(1-\alpha)} [(e^{-\gl}-\alpha)- \sqrt{\Delta_{\alpha}(\lambda)}] \quad &\text{ if } \gl \le \gl_{\alpha}).  
            \end{cases}
 \end{equation*}
Fix $p \neq q \in (0,1) $. Using the fact that the Legendre transform of a strictly convex smooth  function is itself smooth and that the restriction of the Legendre transform to this class of functions is invertible, it is not hard to verify that for every $\epsilon \in (0,1/100)$ there is some $r \in (3-\epsilon,3) $ such that $\Psi_{p}(\frac{r}{1-p}) \neq \Psi_{q}(\frac{r}{1-q})  $.\footnote{The choice of the constant $1/100$ is arbitrary and is made in order to ensure that $3-r$ is small.} This implies that for all  $p \neq q \in (0,1) $, we can find $\delta=\delta_{p,q} \in (0,1/10) $ such that for some $\delta'=\delta_{p,q}' >0$ either (Case 1:) $(1-p)t_{\delta}^{(p)}-(1-q)t_{\delta}^{(q)} \ge \delta'n(1 \pm o(1) )$, for all $n$ or (Case 2:)  $(1-q)t_{\delta}^{(q)}-(1-p)t_{\delta}^{(p)} \ge \delta'n(1 \pm o(1) )$ for all $n$ (namely, fixing such $r$ we can pick $\delta$ to satisfy  $2^{-n \delta}=e^{-2n\Psi_{p}(\frac{r}{1-p}) } $; Case 1 corresponds to $\Psi_{p}(\frac{r}{1-p}) > \Psi_{q}(\frac{r}{1-q})$ and Case 2  to  $\Psi_{p}(\frac{r}{1-p}) < \Psi_{q}(\frac{r}{1-q})$).  This implies that we can tune $\kappa$ such that in Case 1 we have $t_{\delta}^{(p)}>s^{(p)} $ and $\limsup_{n \to \infty} t_{\delta}^{(q)}/s^{(q)}<1 $ and in Case 2 we have  $t_{\delta}^{(q)}>s^{(q)} $ and $\limsup_{n \to \infty} t_{\delta}^{(p)}/s^{(p)}<1 $. Thus in the first case the $p$-lazy chains exhibit separation cutoff while the $q$-lazy chains do not, and in the second case the $q$-lazy chains exhibit separation cutoff while the $p$-lazy chains do not. Note that in the above argument the formulas for $ \Psi_{p}$ and $\Psi_{q}$ played no role. However, they can be used to distinguish between Cases 1 and 2 for every fixed $p \neq q$.   

\section{ Proof of Theorem \ref{thm: sensitive2}}
\subsection{Preliminaries}
Before proving Theorem \ref{thm: sensitive2} we make several general comments regarding a principle which shall be utilized below repeatedly. We summarize a few different variations of this principle in Fact \ref{fact: SRWonZ} (whose proof is deferred to the appendix \S~\ref{s:F41}).
\medskip

Let $\cT=(V,E)$ be an infinite binary tree rooted at $o$ (in practice, we shall work with finite trees; however, it is not hard to show that the ``boundary effect" coming from the finiteness of the trees is negligible for our poruses). For any vertex $u $ we distinguish its two children by \emph{left} and \emph{right} child. We denote the collection of all left (resp.~right) children in $\cT$ by $L$ (resp.~$R$). Denote by $\cL_n$ the collection of vertices whose distance from $o$ is $n$. For any vertex $u  $ let $L(u)$, (resp.~$R(u)$) be the number of left (resp.~right) children along the path from $u$ to the root. Denote $g(u)=L(u)-R(u)$.
Let $\tau_{k}:=\sup \{t:X_t \in \cL_k \}$.
\begin{fact}
\label{fact: SRWonZ}
\begin{itemize}
\item[(1 a)]Starting from $o$, for each fixed $n$ we have that $g(X_{T_{\cL_n}})$ is distributed like $S_{n}$, where $(S_k)_{k \in \Z_+ }$ is a SRW on $\Z$, started at the origin. Moreover, $(g(X_{\tau_k}))_{k \ge 0}$ is distributed like $(S_k)_{k \in \Z_+ }$.
\item[(1 b)]
Let $n \in \N$ and $D \subset \cL_n$. Let $\mathcal{D}$ be the event that the walk visits the set $D$ at least once. There exists an absolute constance $c>0$ such that
\begin{equation}
\label{eq: hitD1}
c\le \Pr_o[X_{\tau_n} \in D]/\Pr_o[\cD] \le 1 \text{ and } c\le \Pr_o[X_{T_{\cL_n}} \in D]/\Pr_o[\cD] \le 1.
\end{equation}

\medskip

\item[(2 a)] Fix some $ \epsilon >0 $. Consider the network obtained by increasing the edge weight between every $u$ and $v$ such that $v$ is a left child of $u$ to $1+\epsilon$. Then, $(g(X_{\tau_k}))_{k \ge 0}$ is distributed like a biased nearest-neighbor random walk on $\Z$, $(\tilde S_k)_{k \in \Z_+}$, satisfying $\Pr[\tilde S_1=1]=1-\Pr[\tilde S_1=-1]=\frac{\sqrt{1+\epsilon}}{1+\sqrt{1+\epsilon}} =\frac{1}{2}+\epsilon/4-O(\epsilon^2)>1/2$.

\medskip

\item[(2 b)]
Let $n \in \N$ and $D \subset \cL_n$. Let $\mathcal{D}$ be the event that the walk visits the set $D$ at least once. There exists an absolute constance $c$ (which is independent also of $\epsilon$)  such that also in the perturbed network we have that \eqref{eq: hitD1} holds.
\end{itemize}
\end{fact}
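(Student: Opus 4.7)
The proof of all four parts rests on a single geometric observation together with the self-similarity of the binary tree. For part (1a), I first prove $X_{\tau_{k+1}}=X_{\tau_k+1}$: since the walk is transient, $\tau_k$ is almost surely finite, and the walk cannot revisit level $k$ after $\tau_k$. Consequently at time $\tau_k+1$ it must step to a child of $X_{\tau_k}$ (stepping to the parent would force another crossing of level $k$ on the way to infinity), and thereafter stays in that child's subtree; the only level-$(k{+}1)$ vertex ever visited after $\tau_k$ is therefore $X_{\tau_k+1}$. The automorphism of the unweighted tree swapping the two subtrees below $X_{\tau_k}$ then yields a uniform choice of child, so $(g(X_{\tau_k}))_{k\ge 0}$ is a simple random walk on $\Z$. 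The marginal identity $g(X_{T_{\cL_n}})\sim S_n$ follows from the transitive action of the automorphism group on $\cL_n$: $X_{T_{\cL_n}}$ is uniform on $\cL_n$, and the push-forward of the uniform law by $v\mapsto L(v)-R(v)$ coincides with the law of $S_n$.

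For part (2a), the identity $X_{\tau_{k+1}}=X_{\tau_k+1}$ survives the perturbation unchanged, so the task reduces to computing the bias of a single step. By self-similarity every subtree of the perturbed tree is isomorphic as a weighted graph to the whole tree; hence the bias is a fixed number $p$. To evaluate $p$ I compute the effective conductance $\Gamma$ from a vertex to infinity through its subtree. Series-parallel reduction yields the self-consistency equation
\[
\Gamma \;=\; \frac{(1+\epsilon)\Gamma}{(1+\epsilon)+\Gamma} \;+\; \frac{\Gamma}{1+\Gamma},
\]
whose unique positive solution is $\Gamma=\sqrt{1+\epsilon}$. Conditional on escape through the subtree of $X_{\tau_k}$, the escape probability in the left (respectively right) direction is proportional to the series combination of the corresponding edge with the subtree conductance $\Gamma$; the ratio simplifies to $\sqrt{1+\epsilon}:1$, giving $p=\sqrt{1+\epsilon}/(1+\sqrt{1+\epsilon})$.

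For parts (1b) and (2b), the plan is to lower bound the conditional probability $\Pr_o[X_{\tau_n}=v\mid \text{walk visits }v]$ (and its $T_{\cL_n}$-analog) by a positive constant uniformly in $v$ (and in $\epsilon$, in the perturbed setting), and then to deduce \eqref{eq: hitD1} by summing over $v\in D$ on the left and using the union bound $\Pr_o[\cD]\le\sum_{v\in D}\Pr_o[\text{visit }v]$ on the right. By a last-exit/renewal decomposition at $v$, the conditional probability for $X_{\tau_n}$ equals the fraction of the total escape probability from $v$ that is realized through $v$'s subtree. In the unweighted case this is an explicit rational constant close to $3/4$, and the $T_{\cL_n}$-analog is even simpler because symmetry forces $\Pr_o[X_{T_{\cL_n}}=v]=2^{-n}$ exactly while $\Pr_o[\text{walk visits }v]\le C\cdot 2^{-n}$ (the latter following from a one-step recurrence $q_n=2/(5-2q_{n-1})$ for the path-hitting probability, whose unique fixed point in $(0,1)$ is $1/2$). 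The main anticipated obstacle is the uniformity in $\epsilon$ in (2b): the escape probabilities through the subtree and through the parent both vanish as $\epsilon\to\infty$, but the coupled return-probability recursions define continuous functions of $\epsilon$ whose ratios have positive limits, which via a compactness argument yields the absolute constant $c$.
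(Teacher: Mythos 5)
Your proposal is correct and follows essentially the same route as the paper's (largely sketched) proof: for (2a) your self-consistency equation $\Gamma=\sqrt{1+\epsilon}$ and the ratio of the two series conductances is exactly the paper's network reduction ($w=w_L+w_R$, $1/w_L=1/(1+\epsilon)+1/w$, $1/w_R=1+1/w$), and for (1b) your per-vertex bound $\Pr_o[X_{\tau_n}=v]\ge c\,\Pr_o[\text{visit }v]$ plus a union bound is the same escape-probability idea the paper implements by decomposing according to the last (resp.\ first) vertex of $D$ visited; your observation $X_{\tau_{k+1}}=X_{\tau_k+1}$ and the exact harmonic measure $2^{-n}$ at $T_{\cL_n}$, together with the hitting recurrence with fixed point $1/2$, correctly fill in details the paper leaves to the reader. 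The one thin spot is the uniformity in $\epsilon$ for the $T_{\cL_n}$ half of (2b): there the symmetry giving $\Pr_o[X_{T_{\cL_n}}=v]=2^{-n}$ is lost, and ``compactness'' by itself does not produce a constant independent of $\epsilon$ unless you control the $\epsilon\to\infty$ regime, where first-hitting and ever-hitting probabilities can in fact separate (excursions down heavy left rays reach level $n$ with only polynomially small probability, while hitting a right child costs order $1/\epsilon$ per visit); by contrast, for the $X_{\tau_n}$ half the relevant ratio of downward to upward escape conductances does stay bounded below uniformly, as your recursion argument shows. Since the paper's applications only use bounded (indeed small) $\epsilon$, this does not affect how the Fact is used, but if you want the statement as written you should either restrict to $\epsilon$ in a bounded range or supply a genuine large-$\epsilon$ argument for the first-hitting inequality rather than appeal to continuity.
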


\begin{fact}[Local CLT] Let $(S_k)_{k \in \Z_+}$ be simple random walk (SRW) on $\Z$. Then there exists an absolute constant $C>1$ such that for all $n \ge 1$ and $1 \le m \le \lceil n^{1/4} \rceil $
\begin{equation}
\label{eq: LCLT}
1/C \le \Pr_0[S_n \ge m \sqrt{n} ]m e^{m^2/2} \le C .
\end{equation}
\end{fact}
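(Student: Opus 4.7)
The plan is to combine Stirling's formula for the binomial point-probability with the classical Mills bound on the Gaussian tail. The key input is the following uniform local CLT: for $k \in \Z$ with $k \equiv n \pmod 2$ and $|k| \le 2n^{3/4}$,
\[
\Pr_0[S_n = k] = \sqrt{\tfrac{2}{\pi n}}\, e^{-k^2/(2n)}\, \exp\bigl(O(1/n + k^4/n^3)\bigr),
\]
with an \emph{absolute} implicit constant. This is obtained by writing $\Pr_0[S_n = k] = \binom{n}{(n+k)/2}\, 2^{-n}$, applying Stirling, and expanding $\log(1 \pm k/n)$ to third order; all higher-order remainders of the form $k^{2j}/n^{2j-1}$ with $j \ge 2$ are dominated by $k^4/n^3$ on this window, which itself stays bounded precisely because $m \le \lceil n^{1/4} \rceil$ forces $k^4/n^3 = O(1)$. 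The net outcome is that $\Pr_0[S_n = k] = \Theta\bigl(n^{-1/2} e^{-k^2/(2n)}\bigr)$ uniformly on this window.

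For the upper bound, I split
\[
\Pr_0[S_n \ge m\sqrt{n}] = \sum_{\substack{m\sqrt{n} \le k \le n^{3/4} \\ k \equiv n \pmod 2}} \Pr_0[S_n = k] \;+\; \Pr_0[S_n > n^{3/4}].
\]
Comparing the first sum (which has spacing $2$) to the corresponding Gaussian integral via the substitution $y = k/\sqrt{n}$ bounds it by a constant times $\int_m^\infty e^{-y^2/2}\,dy$, while Hoeffding's inequality bounds the tail by $\Pr_0[S_n > n^{3/4}] \le e^{-\sqrt{n}/2}$, which is absorbed into $e^{-m^2/2}$ because $m \le n^{1/4}$. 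Mills's inequality $\int_m^\infty e^{-y^2/2}\,dy \le m^{-1} e^{-m^2/2}$ (valid for $m \ge 1$) then yields $\Pr_0[S_n \ge m\sqrt{n}] \le C\, m^{-1} e^{-m^2/2}$. For the lower bound, I restrict the sum to the window $k \in [m\sqrt{n},\, m\sqrt{n} + \sqrt{n}/m]$, which contains $\Theta(\sqrt{n}/m)$ parity-correct integers and sits inside the local CLT region. Across this window $k^2/(2n) = m^2/2 + O(1)$, so each summand is $\Theta\bigl(n^{-1/2} e^{-m^2/2}\bigr)$, and the sum is $\ge c\, m^{-1} e^{-m^2/2}$. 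This matches the upper bound, and multiplying through by $m e^{m^2/2}$ completes the two-sided estimate.

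The principal technical obstacle is producing the local CLT with a remainder that is merely \emph{bounded} (rather than $o(1)$) yet has \emph{absolute} implicit constants throughout the full window $|k| \le 2n^{3/4}$; keeping these constants independent of $m$ and $n$ is precisely what forces the cutoff $m \le \lceil n^{1/4} \rceil$ in the statement, and when $m$ is close to $n^{1/4}$ one must check that the lower-bound window $[m\sqrt{n}, m\sqrt{n} + \sqrt{n}/m]$ still lies safely inside $|k| \le 2n^{3/4}$. Once this Stirling bookkeeping is in hand, the Mills comparison and the Hoeffding absorption of the high-$k$ tail are both routine.
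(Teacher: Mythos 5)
The paper states this local CLT estimate as a known fact and gives no proof of it, so there is no in-paper argument to compare with; judged on its own, your route is the natural one and is mostly sound: the Stirling expansion does give $\Pr_0[S_n=k]=\sqrt{2/(\pi n)}\,e^{-k^2/(2n)}\exp\bigl(O(1/n+k^4/n^3)\bigr)$ with absolute constants on $|k|\le 2n^{3/4}$ (the neglected $O(k^2/n^2)$ prefactor correction is indeed dominated by $1/n+k^4/n^3$ there), and your lower bound via the window $[m\sqrt n,\,m\sqrt n+\sqrt n/m]$ is correct: it contains $\Theta(\sqrt n/m)$ parity-correct sites, lies inside $|k|\le 2n^{3/4}$ once $n\ge 16$, and the finitely many small $n$ can be absorbed into $C$ (note in passing that for $n=2,3$ and $m=\lceil n^{1/4}\rceil=2$ one has $m\sqrt n>n$, so the statement must anyway be read for $n$ beyond a few initial values).

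The one step that fails as written is the absorption of the far tail in the upper bound. You cut the sum at $n^{3/4}$ and claim $\Pr_0[S_n>n^{3/4}]\le e^{-\sqrt n/2}$ is ``absorbed into $e^{-m^2/2}$ because $m\le n^{1/4}$''. But the target is $C\,m^{-1}e^{-m^2/2}$, and for $m$ within $O(1)$ of $n^{1/4}$ (in particular $m=\lceil n^{1/4}\rceil$) one has $m^2/2$ as large as $\sqrt n/2+n^{1/4}+1/2$, so the ratio $e^{-\sqrt n/2}\big/\bigl(m^{-1}e^{-m^2/2}\bigr)=m\,e^{(m^2-\sqrt n)/2}$ can be of order $n^{1/4}e^{n^{1/4}}$; even when $m^2=\sqrt n$ exactly (take $n=m^4$) the ratio equals $m=n^{1/4}$, which is unbounded. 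Worse, for such $m$ the ``main sum'' over $[m\sqrt n,n^{3/4}]$ is empty or nearly so, so the whole upper bound then rests on this failing step, and you lose precisely the factor $1/m$ you are trying to prove. The repair is immediate and uses the window you already set up: sum the local CLT estimate over $m\sqrt n\le k\le 2n^{3/4}$ (still bounded by $C\int_{m-2/\sqrt n}^{\infty}e^{-y^2/2}\,dy\le C'm^{-1}e^{-m^2/2}$ via Mills) and only then invoke Hoeffding, which now gives $\Pr_0[S_n>2n^{3/4}]\le e^{-2\sqrt n}$; since $m\,e^{m^2/2}e^{-2\sqrt n}\le (n^{1/4}+1)\,e^{\sqrt n/2+n^{1/4}+1/2-2\sqrt n}\to 0$, this remainder really is $O\bigl(m^{-1}e^{-m^2/2}\bigr)$ uniformly in $1\le m\le\lceil n^{1/4}\rceil$ for $n$ large, and the finitely many remaining $n$ are handled by enlarging $C$. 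With that change the proof is complete.
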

%
\subsection{Proof of part (a) of Theorem \ref{thm: sensitive2}}
Fix some (large) $k \in \N$. We suppress the dependence on $k$ from the notation (below, $o(\cdot), O(\cdot),\Theta(\cdot)$ and $\Omega(\cdot) $ are taken w.r.t.~$k$). Denote $s:=k^3$. We now construct a graph $G=(V,E)$ in three steps (see Figure \ref{fig:thm3}).
\begin{figure}[!h]
\begin{center}
\includegraphics[height=6cm,width=9cm]{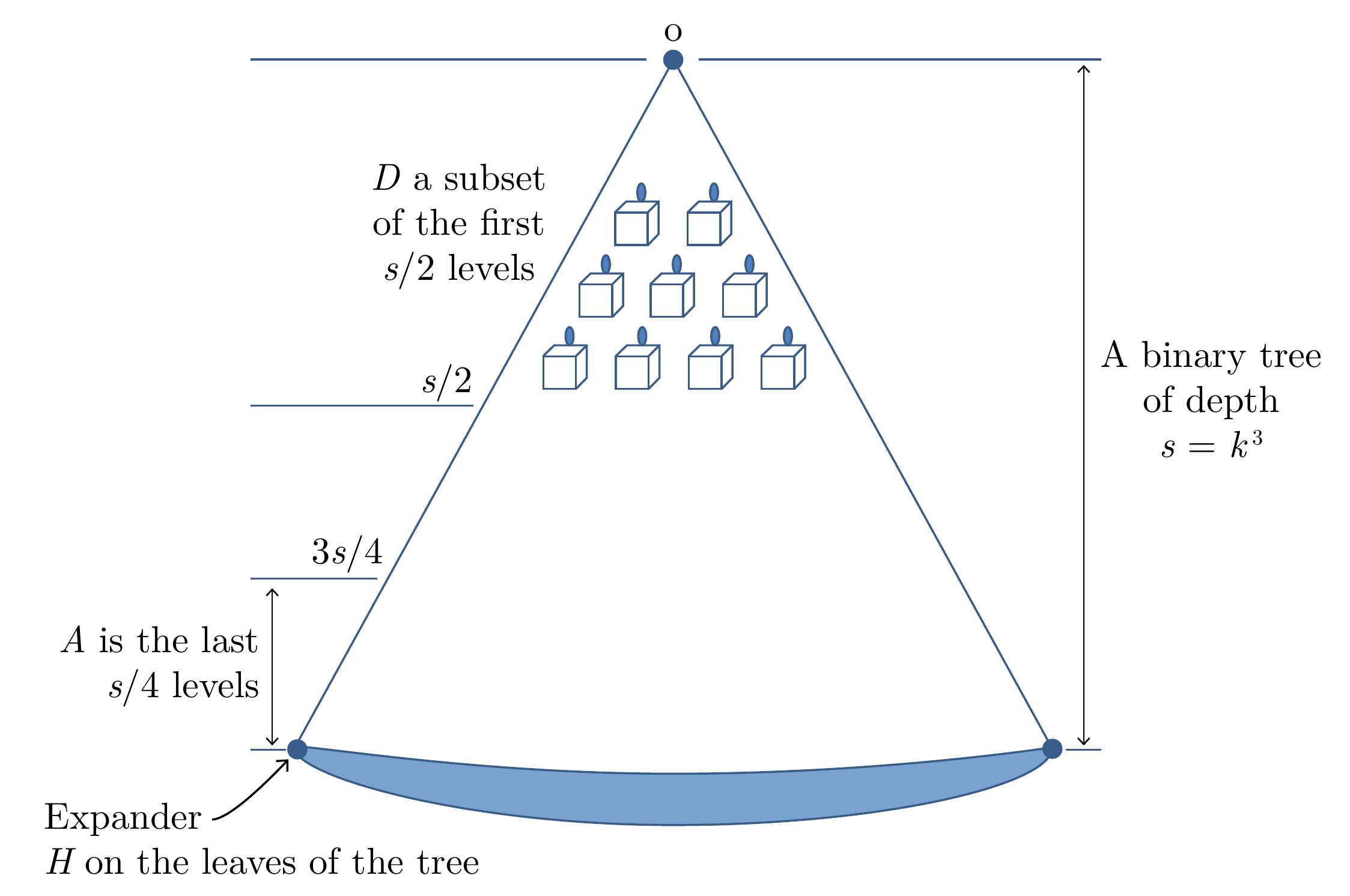}
\end{center}
\caption{\label{fig:thm3} This is schematic representation of the graph $G$ from part (a) of Theorem \ref{thm: sensitive2}. We start with a binary tree rooted at $o$ of depth $s:=k^3$. We then connect its leafs using an expander. We define a set $D$, contained in the first $s/2$ levels of the tree, which is the collection of ``unbalanced" vertices in the sense that the number of left/right turns from them to the root violates the Law of Iterated Logarithm in some strong sense.  Finally, $d \in D$ is decorated by a $k \times k \times k $ torus, represented by a square.}
\label{fig:thm3}
\end{figure}
\begin{itemize}
\item[Step 1:]
Start with a binary tree $\cT=(V(\cT),E(\cT)) $ of depth $s$ with root $o$.
\item[Step 2:] Form an expander on the leaves of $\cT$: Let $H=(V',E')$ be an arbitrary expander with $|V'|=2^{s}$. Identify the leaves of $\cT$ with the set $V'$ and connect by an edge every pair of leaves $u,v $ such that $\{u,v \} \in E'$.
\item[Step 3:] We pick a set $D \subset V$ strategically as follows and decorate each of its vertices by a 3D torus of side length $k$ (and so of size $s$):

Denote the collection of vertices belonging to the $i$-th level of $\cT$ by $\cL_i(\cT)$. For any vertex $u $ which is not a leaf of $\cT$, we distinguish its two children by \emph{left} and \emph{right} child. We denote the collection of all left (resp.~right) children in $\cT$ by $L$ (resp.~$R$). Fix some large integer $C$ to be determined later. We note that one can set $C=1$, but the analysis is somewhat smoother by taking $C$ to be large. Including the constant $C$ in the construction shall benefit us in the proof of part (b). We denote by $D_{i}$ the collection of all vertices $u$ belonging to $\cL_{Ci}(\cT) $ such that if $\gamma_u=(v_0=u,v_1,\ldots,v_{Ci}=o)$ is the path from $u$ to $o$ in $\cT$, then for all $1\le j \le i$ $$|\{v_{\ell}: 0 \le \ell \le Cj  \} \cap L|-|\{v_{\ell}: 0 \le \ell \le Cj  \} \cap R| \ge 3\sqrt{Cj\log \log (Cj)}.$$
Crucially, above the ``base point" $v_0$ of $\gamma_u$ was taken to be $u$ itself (rather than $o$).

Let $D_0:=\{o\}$. Denote $D:=\bigcup_{i=0}^{\lfloor s/2C\rfloor} D_i$. We decorate each $v \in D$ by a 3D $k \times k \times k  $ torus. That is, for each $v \in D$ we attach to $v$ a three dimensional torus, $W_v$, of side length $k$, having $v$ as one of its vertices, while the rest of its vertices are disjoint from $\cT$ (where $W_v$ and $W_u$ are taken to be disjoint if $v \neq u$). Call the resulting graph $G=(V,E)$.
\end{itemize}
We argue that
\begin{equation}
\label{eq: k4}
t_{\mathrm{mix}}(G)=\Theta (s) \text{ and also } \rel(G)=\Theta (s).
\end{equation}
By Lemma \ref{lem: relaxation} $\rel(G)=\Omega (s)$.
We now explain why indeed $t_{\mathrm{mix}}(G)=\Theta (s)$ (by \eqref{eq: t_relintro} this implies that $\rel(G)=\Theta(s)$). Let $A:=\bigcup_{j \ge 3s/4}\cL_j(\cT) $. The set $A$ is sufficiently far from the set $D$ so that starting from any $u \in A$ the walk mixes in $\Theta(s)$ steps (as if the vertices in $D$ were not decorated by tori). That is, there exists an absolute constant $C_1 \in \N $ such that for every $u \in A$,
\begin{equation}
\label{eq: mixingcenter1}
\|\Pr_{u}^{ C_1s} - \pi \|_{\TV}=o(1).
\end{equation}
This can be deduced formally using Proposition \ref{prop: L2contractiononasubchain}. Proposition \ref{prop: L2contractiononasubchain} applies  because for all $a \in A$, by the tree structure we have that $\Pr_{a}[T_{D} \le C_1s] $ can be bounded from above by the probability that a LSRW on a binary tree of depth $\lfloor s/4 \rfloor$, started from some leaf, reaches the root by time $C_1s$ (which occurs with probability of at most $C'e^{-cs}$). Consequently, by Proposition \ref{prop: L2contractiononasubchain}, there exists some absolute constant $C_2'$ such that for every $t \ge 0$
\begin{equation}
\label{eq: mixhit2}
\begin{split}
& \max_{u \in V} \Pr_u[T_A>t] \le d(t)+(1-\pi(A))=d(t)+o(1) .
\\ & \max_{u \in V} \Pr_u[T_A>t] \ge d(t+C'_{2}s)-o(1).
\end{split}
\end{equation}
Let $T_1$ (resp.~$T_2$) be the total amount of time, prior to time $T_A$, that the walk spends at $\bigcup_{v \in D}W_v$ (resp.~$V \setminus \bigcup_{v \in D}W_v $). Let $T_3$ be the total number of times the set $D$  was visited prior to time $T_A$ by crossing some edge belonging to $\cT$. That is, $$T_3:=|\{1 \le t<T_A:X_{t} \in D, X_{t-1} \in V(\cT) \}|.$$ We argue that there exist absolute constants  $C_2,C_{3},\beta>0$ such that
\begin{itemize}
\item[(1)]
$$\lim_{m \to \infty} \max_{u \in V} \Pr_u[T_3>m]=0. $$
\item[(2)]
$$\max_{u \in V} \Pr_u[T_2> 5s]=o(1).$$
\item[(3)] For every $m,r \in \N$ we have that
$$  \max_{u \in V} \Pr_u[T_1 >C_{2}mrs \mid T_3=m ] \le C_3 e^{-\beta mr}. $$
\end{itemize}

Combining (1)-(3) with (\ref{eq: mixhit2}) concludes the proof of the fact that $t_{\mathrm{mix}}(G)=\Theta (s)$.

To see why (1) holds, use part (1 a) of Fact \ref{fact: SRWonZ}, in conjunction with the law of the iterated logarithm and the fact that the distribution of the number of visits by time $T_A$  to each $\cL_i(\cT)$, denoted by $N_i$, has an exponential tail (along with the fact that $\Cov (N_i,N_{i+j}) $ decays exponentially in $j$ for all $i,j$). We leave the details as an exercise.

For (2), note that the distance from the root of a LSRW on a binary tree behaves like a biased nearest neighbor walk whose average speed is $1/6$.

It is not hard to show that there exist some $C_4,C_5,\beta'>0 $ such that for all  $v \in D $
\begin{equation}
\label{eq: delay1}
\max_{u \in W_v} \Pr_{u}[T_{V \setminus W_v}> ms] \le C_{4}e^{-\beta'm}, \text{ for every }m \in \N.
\end{equation}
\begin{equation}
\label{eq: delay2}
\forall u \in W_v, \quad  s/2 \le \mathbb{E}_u[T_{V \setminus W_v}] \le C_5s.
\end{equation}
Thus (3) is obtained as a large deviation estimate.

\medskip

The bounded perturbation from the assertion of part (a) of Theorem \ref{thm: sensitive2} is obtained by increasing the edge weight of the edge between any $v \in \bigcup_{0 \le i \le s/2} \cL_{i}(\cT) $ and its left child to $1+\epsilon$, for some constant $\epsilon>0$.

\medskip
It is easy to show that (\ref{eq: mixhit2}) remains valid also in the perturbed network and that (by symmetry) the maximum (in the LHSs of (\ref{eq: mixhit2})) can (still) be taken over the set $W_o$. To distinguish between the LSRW on $G$ and the lazy random walk on the perturbed network we adopt the convention that when referring to the perturbed network we write $\Pr_u'$ and $\mathbb{E}_u'$, instead of $\Pr_u$ and $\mathbb{E}_u$.

\medskip

Let $T_1,T_2,T_3$ be as above. Using part (2) of Fact \ref{fact: SRWonZ} it is not hard to verify that for any $u \in W_o$ we have that $\mathbb{E}'_u[T_3] \ge c_{6}(\eps)s$ and  $\Var'_{u}[T_3] \le C_7(\eps)s$ for some constants $c_6(\eps),C_7(\eps)>0$, depending on $\epsilon$. The fact that $\mathbb{E}'_u[T_3] \ge c_{6}(\eps)s$ (for some $c_6(\eps)$) is clear from part (2 a) of Fact \ref{fact: SRWonZ}. To see that  $\Var'_{u}[T_3] \le C_7(\eps)s$ use part (2) of Fact \ref{fact: SRWonZ} to deduce that the correlation between the contribution to $T_3$ from vertices belonging to $D_i$ and $D_{i+j}$, respectively, decays exponentially in $j$.

Using these estimates, we now show that the order of the mixing time of the walk on the perturbed network is $\Theta (s^{2})$ and that  a sequence of walks on such perturbed networks with $k \to \infty$ exhibits cutoff. 

\medskip

Using (\ref{eq: delay1})-(\ref{eq: delay2}) it is not hard to show that for any $u \in W_o$, we have that $\mathbb{E}'_u[T_1] \ge c_{8}s^{2}$ and that  $\Var'_{u}[T_1] \le C_9 s^{3}=o((\mathbb{E}'_u[T_1])^2)$, for some constants $c_8,C_9>0$ depending on $\epsilon$. Moreover, for every $u \in W_o$, $ \mathbb{E}'_u[T_o]\le C_{5}s$. Consequently, it follows from Chebyshev's inequality that starting from every $u \in W_o$ we have that $T_A$ is concentrated around $\mathbb{E}'_o[T_1]=\Theta (s^{2})$. By (\ref{eq: mixhit2}) there exist some constants $c_{\epsilon},c'_{\epsilon}>0$ such that the mixing time of the walk on the perturbed network is $c'_{\epsilon}s^{2}(1 \pm o(1) )$ and thus the mixing time increased by a factor of $c_{\epsilon}\log |V|$.
Moreover,  a sequence of walks on the perturbed networks with $k$ tending to infinity exhibits cutoff (around time $\mathbb{E}'_o[T_1]$). \qed

\begin{remark}
The choice of a 3D torus was made to emphasize the similarity of the construction to the one from \cite{cf:Ding}; In fact, we could have used any bounded degree graph of size $\Theta(\log |V'|)=\Theta(s)$ which satisfies $\max_{x,y}\mathbb{E}_{x}[T_y] \le C s$.

\end{remark}

\subsection{Proof of part (b) of Theorem \ref{thm: sensitive2}}
We present two different constructions.

\textbf{First construction:} The first construction is obtained from the construction of the proof of part (a) by replacing the constant $C$ (from step 3) by some $C(k)$ tending to infinity as $k \to \infty $ such that $C(k)=o(s)$ (where as above $s=k^3$). Call the obtained network $G=(V,E)$. Clearly (\ref{eq: k4}) remains valid. This follows from the fact that \eqref{eq: mixhit2} remains valid, and that for all $t>0$ it is still true that $\max_{u \in V} \Pr_{u}[T_A>t]=\max_{u \in W_o} \Pr_{u}[T_A>t]$.

\medskip

Consider a perturbation of the same collection of edges which were perturbed in the proof of part (a), only that now we increase their weights to $1+K \sqrt{ \frac{\log \log (C(k)) }{C(k)}}$, where $K>4$ is some sufficiently large absolute constant to be determined shortly. Similar reasoning as in the proof of part (a) (using part (2 a) of Fact \ref{fact: SRWonZ}, together with the law of the iterated logarithm, here for a biased random walk on $\Z$, with a fixed bias) shows that if $K$ is sufficiently large, this perturbation increases the order of the mixing time by a factor of order $s/C(k)=\Theta( (\log |V|)/C(k))$ and that a sequence of random walks on the perturbed networks with $k \to \infty$ exhibits a cutoff. By taking $C(k)$ to tend to infinity arbitrarily slowly we can  increase the mixing time by any factor $f_k \to \infty $ such that $f_k=o(\log |V|)$.

\medskip

\textbf{Second construction:} We now present the second construction. Take sequences of integers $k_n,r_n,\ell_n,m_{n} $ tending to infinity as $n \to \infty$ such that $s_{n}=k_n^{3}=\ell_n r_n$, $m_n \le \lceil \ell_n^{1/4} \rceil $ and $r_n=\Theta \left(m_{n}e^{m_n^2/2} \right) $. The first two steps of the construction are taken as in the proof of part (a) with $k_n$ in the role of $k$. We modify step 3 by changing the definition of the set $D$ as follows.

Step 3': Set $D_0:=\{o\}$. For every $1 \le i \le s_{n}/(2\ell_{n})=r_{n}/2$, we set $D_i$ to be the collection of all vertices $u \in \cL_{i\ell_{n}}(\cT)$ such that if $(v_0=u,v_1,\ldots,v_{\ell_{n}})$ is the path in $\cT$ between $u$ and its $\ell_n$-th ancestor $v_{\ell_{n}}\in \cL_{(i-1)\ell_{n}}(\cT)$ then $$|\{v_{j}: 0 \le j \le \ell_{n}   \} \cap L|-|\{v_{j}: 0 \le j \le \ell_{n}  \} \cap R| \ge \left\lceil m_n \sqrt{\ell_n} \right\rceil.$$
By the local CLT (\ref{eq: LCLT}) and our choice of $m_n$ and $r_n$ we have that $ |D_1|2^{-\ell_{n}} = \Theta (r_{n}^{-1})$. Consequently, for every  $i \le r_n/2-1 $, for all $u \in \cL_{(i-1)\ell_n}(\cT)$ we have that
 \begin{equation}
 \label{eq: rn}
 \Pr_u[T_{D_{i}}=T_{\cL_{i\ell_n}(\cT)}]=\Theta (r_n^{-1}).
 \end{equation}
 We define $D:=\bigcup_{0 \le i \le r_{n}/2}D_i$. As before, we decorate each $v \in D$ by a 3D $k_{n} \times k_{n} \times k_{n}$ torus, $W_v$. Call the obtained graph $G_{n}=(V_{n},E_{n})$.

As before, let $A=A_{n}:=\bigcup_{j \ge 3s_{n}/4}\cL_j(\cT) $ and define $T_i$ in an analogous manner to the way they were defined in the proof of part (a) ($i=1,2,3$). By \eqref{eq: rn} we have that $\mathbb{E}[T_3]=\Theta(1) $ (using similar reasoning as in part (1 b) of Fact \ref{fact: SRWonZ}). Using this fact it is not hard to verify that (\ref{eq: k4}) remains valid also here (with $G_n$ and $s_n$ in the roles of $G$ and $s$). This follows from the fact that \eqref{eq: mixhit2} remains valid, and that for all $t>0$ it is still true that $\max_{u \in V_n} \Pr_{u}[T_A>t]=\max_{u \in W_o} \Pr_{u}[T_A>t]$.

\medskip

We now describe the perturbation of the edge weights described in part (b) of the theorem. Let $\delta_n=K
m_n/\sqrt{\ell_n}$, for some sufficiently large absolute constant $K$ to be determined later. Perturb the same edges as before by increasing their weights to $1+\delta_n$.

\medskip

Much as before, by symmetry, it suffices to consider the case that the initial state of the walk on the perturbed network belongs to $W_o$. Since $\delta_n \ell_n=Km_n \sqrt{\ell_n} $, if $K$ is taken to be sufficiently large , then by part (2) of Fact \ref{fact: SRWonZ}, for every  $i \le r_n/2-1 $ and all $u \in \cL_{(i-1)\ell_n}(\cT)$ we have that $\Pr_u'[T_{D_{i}}=T_{\cL_{i\ell_n}(\cT)}]=1-o(1)$.
This implies that if $K$ is taken to be sufficiently large, then in the perturbed network on $G_n$, starting from any $u \in W_o$, we have that $T_3$ is concentrated around some $t_n=\Theta (r_n)$. Consequently, $T_A$ is concentrated around some $t'_n=\Theta( r_{n}k_n^3)$, which, as before, implies that the walk on the perturbed network exhibits cutoff around time $t'_n$. In particular, the order of the mixing-times increased by a factor of $\Theta(r_n)$. Setting $m_n=\lceil \ell_n^{1/4}\rceil $, we get that $r_n=\Theta(\log |V_n|/\sqrt{\log \log |V_n|}) $. Note however that by taking $m_n$ to tend to infinity arbitrarily slowly we get that $s_n/\ell_n$ tends to infinity arbitrarily slowly and thus so does $\delta_n \sqrt{t_{\mathrm{mix}}(G_n)} $. 

\subsection{Proof of part (c) of Theorem \ref{thm: sensitive2}} Consider the graph $G$ from part (a). Now, stretch all of its edges by a factor 3. The tree $\cT $ is replaced by a tree $\cT'$ with stretched edges. However, we think of $\cT $ of as being contained  in $\cT'$, with each pair of neighbors in $\cT$ being separated by a path of length $3$ in $\cT'$. Denote the obtained graph by $G'$. It is not hard to see that the mixing time of $G'$ can differ from that of $G$ only by a constant factor (the expectation of the hitting time of the leaf set of $\cT$ is delayed by a factor of $3^2$). Now, for each path of length 3, $(u,w,w',v)$ connecting some vertex $u$ of $\cT$ and its left child $v$, lump together the pair of its internal vertices $w,w'$. This has the same effect as replacing the path $(u,w,w',v)$ by a path  $(u,z_{u,v},v)$ with a self loop of weight 2 at $z_{u,v}$.     It is easy to see that this results in a bias towards the left children of $\cT$ (the bias is the same as when we remove the self-loops at the $z_{u,v}$'s; After removing the self-loops, a standard network reduction, similar to the one in the proof of Fact \ref{fact: SRWonZ}, can be used to establish the existence of the aforementioned bias). The analysis can be concluded in a similar manner to the analysis of the perturbed network in part (a).
\qed

\begin{remark}
It is possible to modify the examples from Theorem \ref{thm: sensitive2} so that they also satisfy $\mathrm{girth}(G_n)=\Theta(t_{\mathrm{mix}}(G_{n}))$. Namely, parts (1)-(2) of the construction can be replaced by starting with some regular expander of logarithmic girth (\cite{cf:Moshe,cf:Xpq}) as the base graph, like in the construction of  Theorem \ref{thm sensitivity}. Then, instead of decorating the set $D$ by tori of size $\Theta(\log |V_n|)$, we could decorate them by binary trees of that size.
\end{remark}
\begin{remark}
Both Ding and Peres' example \cite{cf:Ding} and the example from Theorem \ref{thm sensitivity} satisfy the product condition.
It is thus natural to ask whether any sequence of bounded degree graphs satisfying $\rel(G_n)=\Theta (t_{\mathrm{mix}}(G_{n})) $ must be robust (equivalently, to ask whether the condition $\rel(G_n)=\Theta (t_{\mathrm{mix}}(G_{n}))$ is robust). Theorem \ref{thm: sensitive2} demonstrates that in fact the condition $\rel(G_n)=\Theta (t_{\mathrm{mix}}(G_{n}))$ may be $o(1)$-sensitive.
\end{remark}
\begin{remark}
\label{rem:NBRW}
A non-backtracking random walk on a simple graph $G=(V,E)$ (NBRW) evolves as follows. When at vertex $u$ at some time $t$, if at time $t-1$ the walk was at vertex $v$, the next position of the NBRW is chosen from the uniform distribution over $\{x \in V \setminus \{v\} :\{u,x\} \in E \}$.  We may consider the lazy version of a NBRW. 

A small variation of the example from part (a) of Theorem \ref{thm: sensitive2} shows that for a graph $G$ of bounded degree (with no degree 1 vertices) the total variation mixing time of the lazy NBRW may be larger than that of the LSRW by a factor of $\Theta (\log |V|)$. Namely, one can stretch each edge between a vertex in $\cT$ and its right child by a factor 2. As in part (c) of Theorem \ref{thm: sensitive2}, we think of $\cT$ as being contained in the modified tree. One can then define $D_{i}$ to be the collection of all vertices $u$ belonging to $\cL_{Ci}(\cT) $ such that if $\gamma_u=(v_0=u,v_1,\ldots,v_{Ci}=o)$ is the path from $u$ to $o$ in $\cT$, then for all $1\le j \le i$ $$|\{v_{\ell}: 0 \le \ell \le Cj  \} \cap L|-|\{v_{\ell}: 0 \le \ell \le Cj  \} \cap R| \le \epsilon j, $$
for some small $\epsilon>0$ and, as before, set $D:=\bigcup_{i=0}^{\lfloor s/2C\rfloor} D_i$. Finally, as before, decorate each $d \in D$ by a $3D$ torus of side length $k$ and connect the leafs of $\cT$ using an expander $H$. It is not hard to see that if $\epsilon$ is taken to be sufficiently small, then due to the bias towards the left children of $\cT$, resulting from stretching the ``right edges", $\mathrm{w.h.p.}$ the LSRW will visit only a constant number of tori before reaching the $3s/4 $ level of $\cT$. However,  the harmonic measure of the lazy NBRW is unaffected by the stretched edges (meaning that if $v$ and $v'$ are the children of $u$ in $\cT$, then also in the modified graph, when the lazy NBRW is at $u$, it has the same probability of reaching either $v$ or $v'$ before the other), which means that $\mathrm{w.h.p.}$ it will visit $\Theta(k^3)$ tori before reaching the $3s/4 $ level of $\cT$. Since also the lazy NBRW spends at average  $\Theta(k^3)$ steps at each torus, this means that the hitting time of the $3s/4 $ level of $\cT$ is $\Omega(k^6)$, which as before, implies that the mixing time of the lazy NBRW is $\Omega(k^6)$.  
\end{remark}
\section{Proof of Theorem \ref{thm sensitivity}}
Our construction is obtained by   stretching some of the edges of a {\em Ramanujan Cayley graph}.
Let $H$ be a group and $S \subset H$ be a finite symmetric (i.e.~$S=S^{-1}:=\{s^{-1}:s \in S\} $) set of generators of $H$ (i.e.~every $h \in H$ can be written as a finite product of the form $s_1s_2\cdots s_k $ where $s_i \in S$ for all $1 \le i \le k$). The \textbf{\emph{Cayley graph}} of $H$ w.r.t.~$S$ is defined to be the graph whose vertex set is $H$ and whose edge set is $\{\{h,hs \}:h \in H,s \in S  \}$.

\medskip

Let $G$ be a $d$-regular connected graph of size $n$. Denote the transition matrix of simple random walk on $G$ by $P$. Denote the eigenvalues of $P$ by $\lambda_n \le \cdots \le \lambda_2 < \lambda_1=1$. We say that $G$ is a \textbf{\emph{Ramanujan graph}} if $|\lambda_i| \in [0, 2d^{-1} \sqrt{d-1} ] \cup \{1\} $, for all $ i \le n$.

Let $p,q$ be two distinct prime numbers congruent to 1 modulo 4 such that $q>\sqrt{p}$ and $q \equiv a^2  $ modulo $p$ for some integer $a$. Then there exists a $(p+1)$-regular Ramanujan Cayley graph $G_{p,q}$ of size $q(q^2-1)/2$ whose girth is of size at least $2\log_p q $ \cite{cf:Xpq}.
We fix $p=5$ and take an increasing sequence $(q_n)_{n \in \N}$ of such prime numbers and consider  $H_n=G_{5,q_n}=(V'_n,E'_n)$. Fix some vertex $o\in V'_n$. Note that up to  a distance $\log_5 q_n $ from $o$ the graph $H_n$ looks like a $6$-regular tree.

\medskip

We take some sequences  of integers $s_n,m_{n},b_{n} $ tending to infinity such that:
\begin{itemize}
\item $\log_5 q_n/4 \le s_n^2m_nb_{n} \le \log_5 q_n/2$.
\item
$e^{m_n/b_n}=\Theta(s_n^2b_n)$.
\end{itemize}
We think of $b_n$ as\ tending to infinity arbitrarily slowly (compared to $s_n$). As $e^{m_n/b_n}=\Theta(s_n^2b_n)$ we think also of $m_n/ \log s_n $ and $(\log q_n)/(s_n^{2}\log \log q_n) $ as tending to infinity arbitrarily slowly.

\begin{figure}[h]
\begin{center}
\includegraphics[height=5cm,width=5cm]{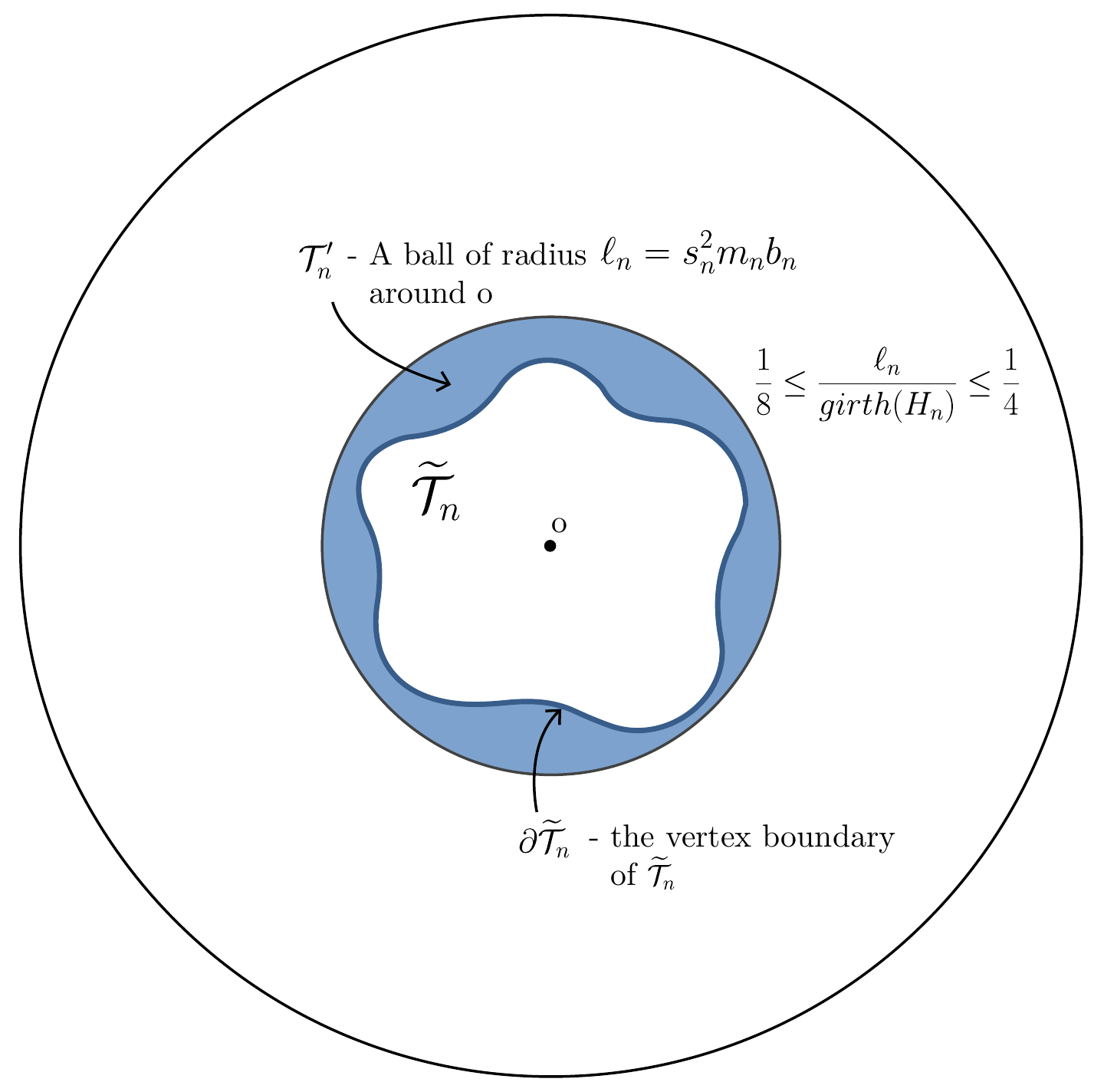}
\end{center}
\caption{\label{fig:4} This is a schematic representation of $H_n$ and the trees $\cT_n' $ and $\tilde \cT_n $}
\end{figure}
\begin{figure}[h]
\begin{center}
\includegraphics[height=5cm,width=5cm]{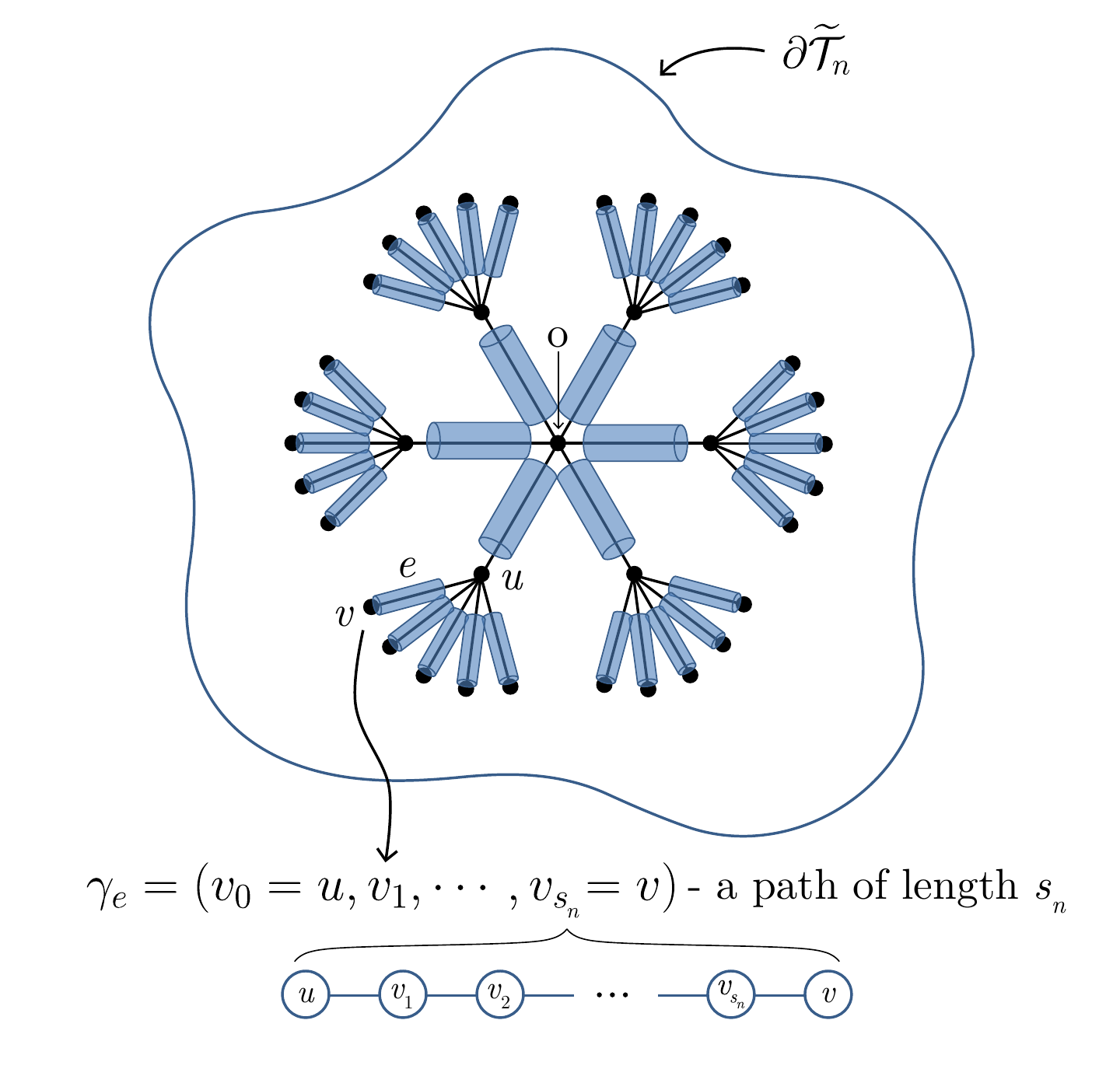}
\end{center}
\caption{\label{fig:5}The graph $G_n$ is obtained by stretching each edge of $\tilde \cT_n $ by a factor of $s_n$. Each cylinder above represents a path of length $s_n$, which replaces some edge of $\tilde \cT_n$.}
\end{figure}

 Denote the ball of radius $s_n^2m_nb_{n}$ around $o$ by $\cT'_n=(V(\cT_{n}'),E(\cT_n'))$. We think of $\cT'_n$ as a 6-regular tree rooted at $o$. We shall construct a sequence of graphs $G_n=(V_n,E_n)$ by stretching some of the edges of $H_n$ by a factor $s_n$ as follows. We shall pick a certain subtree $\widetilde \cT_n \subset \cT'_n$ (which is also rooted at $o$) and replace each edge $\{v,u \}$ in $\widetilde \cT_n$ by a path $\gamma_{u,v}$ of length $s_n$ whose end-points are $u$ and $v$. We call the resulting tree $\cT_n=(V(\cT_{n}),E(\cT_n))$. We identify each vertex of $\widetilde \cT_n=(V(\widetilde \cT_{n}),E(\widetilde \cT_n))$ with the corresponding vertex of $\cT_n$.

\medskip

The stretched edges have the effect of significantly ``slowing down" the walk while it is confined to $\cT_n$. With some care, we shall choose $\widetilde \cT_n$ so that
\begin{itemize}
\item $|\cT_n|/|V_n|=o(1/q_{n}^{2})$ and so the walk must escape $\cT_n$ before mixing.

\medskip

\item The distribution of the escape time from $\cT_n$ starting from the  root, is stochastically the largest (compared to all other starting points).

\medskip

\item The distribution of $T_{V(\cT_n)^c}/(3s_n^2m_nb_n)$, starting from $o$, is ``close" to the Exponential distribution with some constant mean (see \eqref{eq: geometric} for a precise statement).

\medskip

\item
Once the walk escapes $\cT_n$ it has a negligible chance of crossing any stretched edge by the time it is already extremely mixed. Consequently, there exists some $o(1)$ terms such that the additional amount of time required for the walk to become $\epsilon+o(1)$ mixed, beyond the time required for it to escape $\cT_n$ with probability of at least $1-\epsilon$, can be upper bounded by $t_{\mathrm{mix},H_n}(o(1)) \le C \log q_n =\Theta (s_n^2m_nb_n) $ (more precisely, we derive such a bound using Proposition \ref{prop: L2contractiononasubchain}). Putting all this together, it follows that
\begin{equation}
\label{eq: mixGneps}
\frac{t_{\mathrm{mix},G_{n}}(\eps)}{|\log \epsilon|}=\Theta(s_n^2m_nb_n)=\Theta(\log q_n), \text{uniformly for every }0<\epsilon \le 1/2.
\end{equation}
Hence there is no pre-cutoff, although the product condition holds (by Lemma \ref{lem: relaxation} $\rel(G_n)=O(s_n^2)=o(\log q_n)$).

\medskip

\item Under a certain $o(1)$-perturbation of some of the edges of $\cT_n $, w.h.p.~the walk would remain ``trapped" in $\cT_n$ until escaping it through the collection of its leaves which have maximal distance from the root. Consequently, as opposed to the situation in the original graph, the escape time from $\cT_n$ (starting from the root) is concentrated, but around a time of strictly larger order than $\log q_n $, namely $3s_n^4m_nb_{n}=\Theta(s_n^2 \log q_n)$. Thus the walk on the perturbed network exhibits a cutoff around time $3s_n^4m_nb_{n}$.
\end{itemize}

\medskip

We denote the internal vertex boundary of $\cT_n$ w.r.t.~$G_n$ and of $\widetilde \cT_n $ w.r.t.~$\cT_n'$  by $$\partial \cT_n:=\{v \in V(\cT_{n}) : \exists u \notin V(\cT_{n}) \text{ such that }\{u,v\}\in E_n \}, $$  $$\partial \widetilde \cT_n:=\{v \in V(\tilde \cT_{n}) : \exists u \notin V(\tilde \cT_{n}) \text{ such that }\{u,v\}\in E(\cT_n') \}.$$ By construction $\partial \widetilde \cT_n=\partial \cT_n$.
As $\widetilde \cT_n$ is rooted at $o$ and $\widetilde \cT_n \subset \cT_n'$, in order to define $\widetilde \cT_n$ it suffices to specify its collection of leaves, which is precisely the set $\partial \widetilde \cT_n$. Namely, $V(\tilde \cT_{n})$ is determined by $\partial \widetilde \cT_n$ as the union of the vertices along all paths in $\cT_n'$ from the root to $\partial \widetilde \cT_n$. We now describe our procedure for choosing the set $\partial \widetilde \cT_n $ (this concludes the construction).
\begin{figure}[h]
A 5-ary tree $\cT \cT $ of depth $m_n$. 
\begin{center}
\includegraphics[height=4cm,width=4cm]{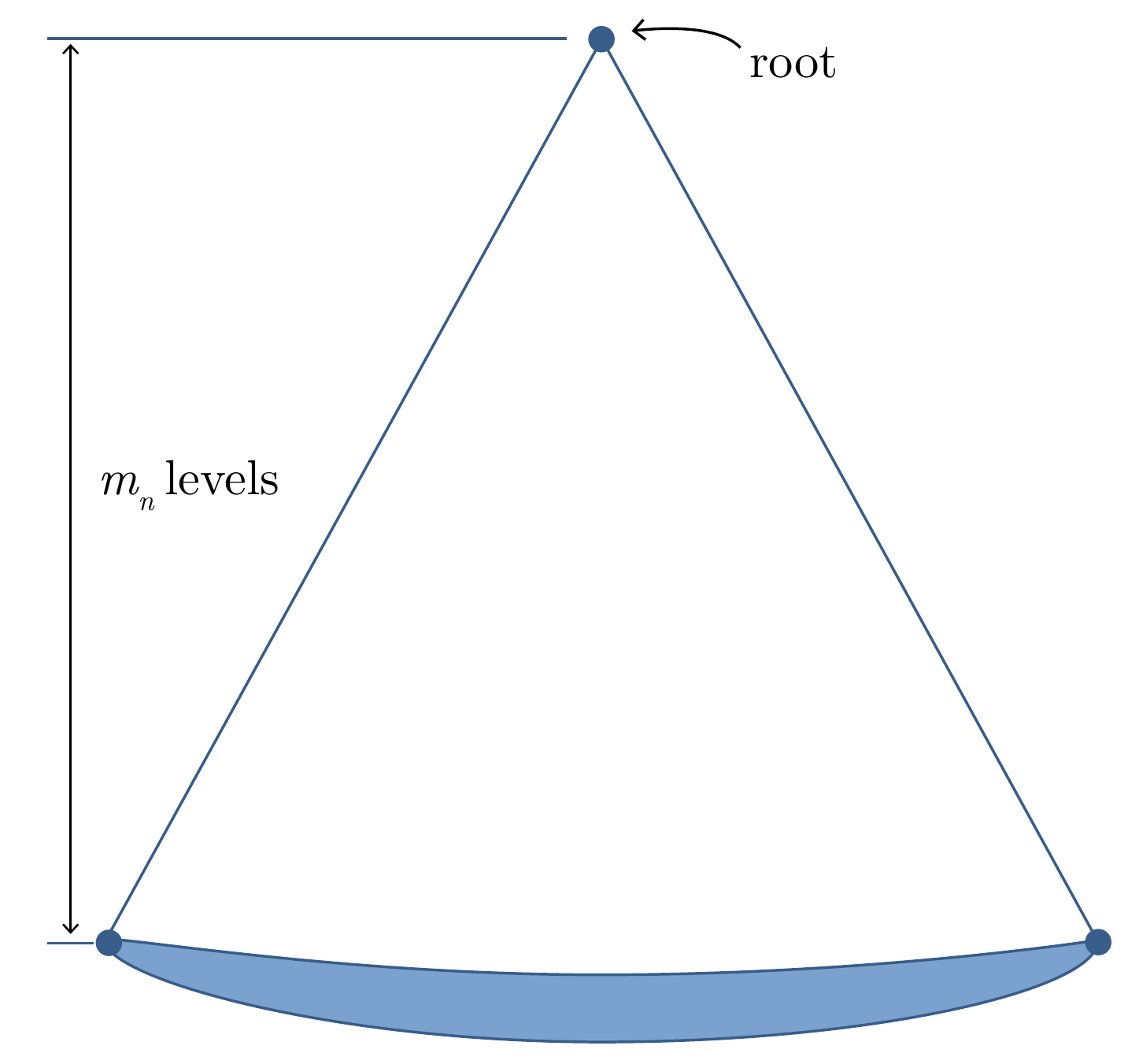}
\end{center}
\caption{\label{fig:TT}The leafs of $\cT \cT $ are partitioned into two parts, $A$ and $D$, where the set $D$ consist of leafs which are in some sense ``unbalanced". The set $D$ belongs to $\partial \tilde \cT_{n} $, while for every $a \in A$ another copy of $\cT \cT$ is contained in $ \tilde \cT_{n} $. We repeat this procedure for $s_{n}-1$ iterations (see Figure \ref{fig:7}), where in every iteration, the partition of the leaves into $A$ and $D$ is identical.}
\end{figure}

\begin{figure}[h]
A schematic representation of the recursive construction of $\tilde \cT_n $
\begin{center}
\includegraphics[height=8cm,width=9cm]{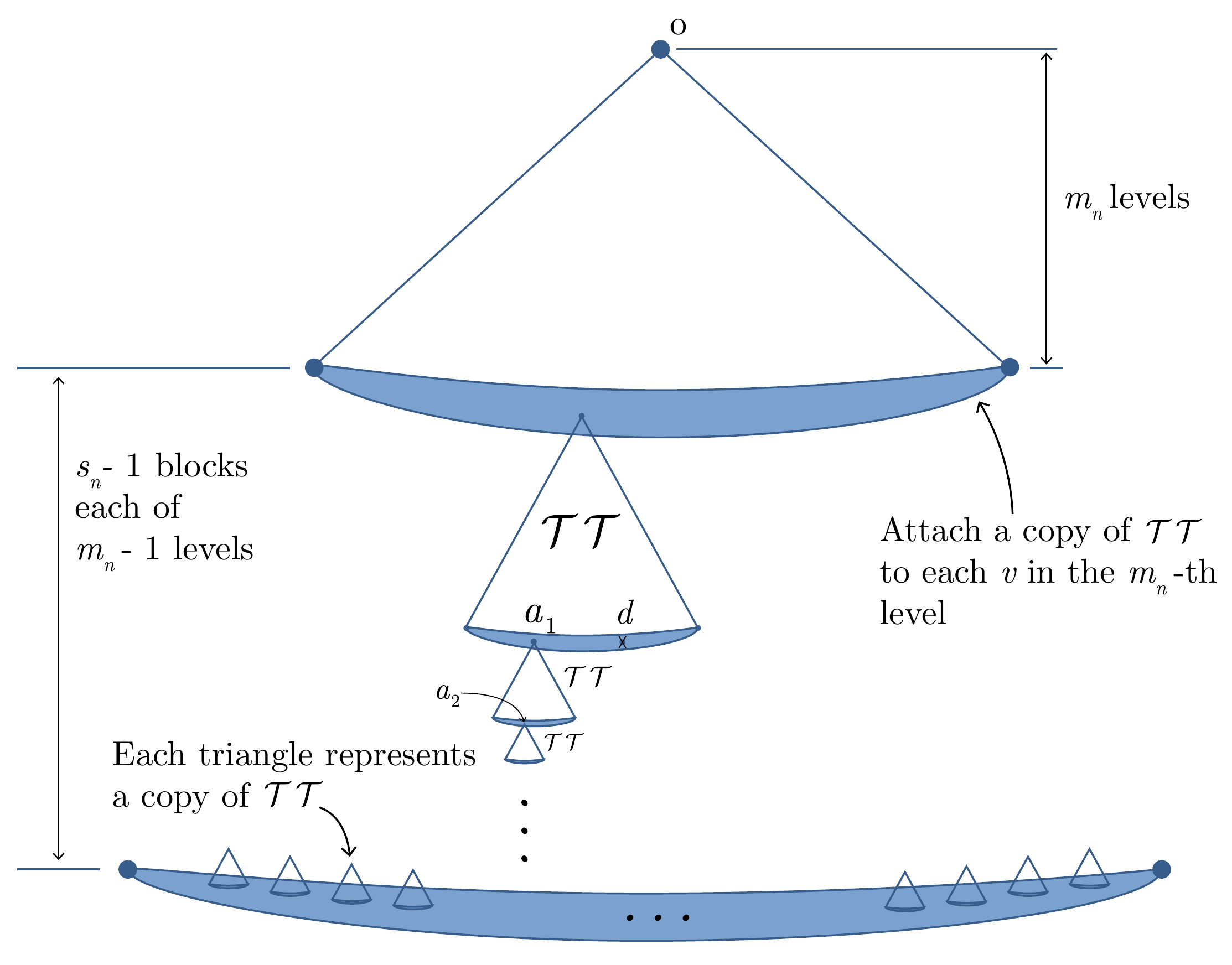}
\end{center}
\caption{\label{fig:7}Every triangle represents a copy of $\cT\cT$. Every $d \in D$ belongs to $\partial \tilde \cT$, while for every $a \in A$ another copy of $\cT \cT$ is contained in $ \tilde \cT_{n} $. We repeat this procedure for $s_{n}-1$ iterations, where in every iteration, the partition of the leafs into $A$ and $D$ is identical. The leafs of the copies of $\cT\cT$ from the last iteration all belong to  $\partial \tilde \cT$. Note that the first iteration is different, in that all of the leafs of the first copy of $\cT \cT$ (the one rooted at $o$) are in $A$.}
\end{figure}

\begin{itemize}
\item[(1)] Denote the $k$-th level of $ \cT'_n $  by $\cL_k(\cT'_{n})$ (this level contains the $ks_n$-th level of $\cT_n$). Then we shall construct $\partial \widetilde \cT_n$ so that  $\partial \widetilde \cT_n \subset \bigcup_{k=2}^{ s_n^2b_{n}}\cL_{km_n}(\cT_n')$.
We shall  define $D_{k}:=\partial \widetilde \cT_n \cap \cL_{km_n}(\cT_n') $ recursively starting from $k=2$ (i.e.~we set $\partial \widetilde \cT_n:=\bigcup_{k=2}^{ s_n^2b_{n}}D_k$). Having defined $D_{2}=\partial \cT_n \cap \cL_{2m_n}(\cT_n'),\ldots,D_{k}=\partial \cT_n \cap \cL_{km_n}(\cT_n') $, we define $A_{k+1}$ to be the set of vertices in $\cL_{(k+1)m_n}(\cT'_{n}) $ such that the path from them to $o$ in $\cT_n'$ does not go through any vertex in $\bigcup_{i=2}^{k}D_i$. The set $D_{k+1} $ shall be defined to be a certain subset of the vertices in  $\cL_{(k+1)m_n}(\cT'_{n}) $ which have a vertex in $A_k$ as an ancestor (this is described in (2)-(3) below). To start the construction we set $A_1:=\cL_{m_n}(\cT_n')$ and to conclude it we define $D_{s_n^2b_n}=\partial \cT_n \cap \cL_{s_n^2n_{b}m_n}(\cT_n')$ to equal $A_{s_n^2b_n}$ (making the last level of $\cT_n$ different). We now specify how $D_{k+1}$ is defined in terms of $A_k$ (first qualitatively (2) and then more concretely (3)).

\medskip

 \item[(2)]
  For every $k \le m_nb_{n}s_n^2-1 $ and every  $v \in \cL_{km_n}(\cT_n')$ we denote $$\cT_{v}:=\{u \in \bigcup_{i=0}^{m_n} \cL_{km_n+i}(\cT'_{n}): \text{ the path in }\cT'_n \text{ between }u \text{ and }o \text{ goes through }v \}.$$
Let $B_v$ be the the set of leaves of $\cT_v$. For all $2 \le k \le s_n^2b_n-1$ we will define $D_k$ to be a subset of $\bigcup_{v \in A_{k-1}}B_v$.
We shall define $\widetilde \cT_n$ so that for every $1 \le k_{1},k_2 \le s_n^2b_{n}-2 $ and every  $v_i \in A_{k_{i}}$ ($i=1,2$) the trivial isomorphism of $\cT_{v_1}$ and $\cT_{v_2}$ is a bijection from $B_{v_{1}} \cap \partial \widetilde \cT_n$ onto $B_{v_{2}}\cap \partial \widetilde \cT_n$. In particular,  $|B_v \cap \partial \widetilde \cT_n |/|B_v| $ is some fixed number, which we shall pick to be between $b_n^{-1}$ and $2b_n^{-1}$.

\medskip
\item[(3)]
We now define the sets $D_2,\ldots,D_{s_n^2b_n}$. For every vertex $v \in A_{k}$ for some $k<s_n^2b_n-1$ and every $u \in \cT_v \setminus B_v$, we distinguish one of the children of $u$ (w.r.t.~the tree $\cT_v$ viewed as a rooted tree with root $v$) as a \emph{left} child. For every $u \in B_v$ let $f(u)$ be the number of left children along the path from $u$ to $v$ in $\cT_v$. We define $F_v=B_v \cap \partial \widetilde \cT_n$ to be the collection of all $u \in B_v$ such that $f(u) \le m_n/5 - g_{n}(m_n)$, where $g_{n}$ (which may depend on $n$ and $(b_n,m_n)$) is chosen so that $1/b_n \le |B_v \cap \partial \cT_n |/|B_v| \le 2/b_n$ (for all sufficiently large $n$). Finally, we set $D_{k+1}:=\bigcup_{v \in A_k}F_v$.
\end{itemize}

\medskip

We start by describing four properties of $G_n$ which do not depend on the particular choice of  $\widetilde \cT_n$.
\begin{itemize}
\item[(a)] $\mathrm{girth}(G_n) \ge \mathrm{girth}(H_n) \ge 2\log_5 q_n$.

\medskip

\item[(b)] $|\cT_n | \le s_n |\cT_n' | \le s_n (5^{s_n^2m_nb_{n}}+1) \le s_n (5^{\frac{1}{2}\log_5 q_n}+1)=o(\sqrt{q_n \log q_n })$. Consequently, $|V_n|=q_{n}(q_{n}^2-1)/2+o(\sqrt{q_n \log q_n })$.

\medskip

\item[(c)]  $\rel(G_n)=O(s_n^2)$ (by Lemma \ref{lem: relaxation}). In particular, $\rel(G_n)=o(\log q_n)$ (which implies that the product condition holds).

\medskip

\item[(d)] There exist absolute constants $C_{1},C_{2}>0$ such that for every vertex $v \in V_n $ whose distance from $\cT_n$ is at least $C_1 \log \log q_n $ we have that $\|\Pr_v^{\lceil C_2 \log q_n \rceil} - \pi_n \|_{\TV}=o(1)$.

\medskip

\item[(e)] Let $C_1$ be as in (d). Denote by $J_n$ the collection of vertices whose distance from $\cT_n$ is at least $\lceil C_1 \log \log q_n  \rceil$. Then for every $u \in \partial \cT_n $ we have that $$\Pr_u[T_{J_{n}}>5C_1 \log \log q_n]=o(1).$$
\end{itemize}
(a) and (b) are trivial. For (d) note that for any $u$ in the exterior vertex boundary of $\cT_n$, the intersection of the ball of radius $\lfloor C_1 \log \log q_n \rfloor $ centered at $u$ with $V_n \setminus V(\cT_n) $ is a 5-ary tree.  Hence (d) follows from Proposition \ref{prop: L2contractiononasubchain}.

\medskip

For (e) observe that for any $v \in \partial \cT_n $ the probability that a lazy random walk started from $v$ reaches its parent $u$ w.r.t.~$\widetilde \cT_n $ (by crossing the path $\gamma_{u,v}$) before reaching $J_n$ is $o(1)$. Moreover,  the probability that the walk would spend at least $C_1 \log \log n$ steps in $\gamma_{u,v}$ before reaching $J_n$ is $o(1)$. Finally, conditioned on hitting $J_n$ before returning to $v$, the conditional distribution of $T_{J_{n}}$ is concentrated around $3C_1\log \log q_n $.

\medskip

\begin{itemize}
\item[(f)]  For any $t \ge 0$,
$$ \Pr_o[T_{\partial \cT_{n}}>t]=\max_{v \in \cT_n }\Pr_v[T_{\partial \cT_{n}}>t].$$
\item[(g)]
There exists an absolute constant $c_1>0$ such that for any
$2 \le k \le m_ns_n^2-1 $
\begin{equation}
\label{eq: geometric}
\begin{split}
& \Pr_o\left[T_{\partial \cT_n}>3s_n^2m_n (1+o(1))k \right] \le \left(1-b_n^{-1} \right)^{k-1}+o(1). \\ & \Pr_o\left[T_{\partial \cT_n}\le 3s_n^2m_n (1-o(1))k \right] \ge 1- \left(1-c_1b_n^{-1} \right)^{k-1}-o(1).
\end{split}
\end{equation}
\end{itemize}

Note that (\ref{eq: geometric}) together with (d)-(f) imply \eqref{eq: mixGneps}.

\medskip

It is easy to see how (f) follows from the symmetry of the construction together with the fact that the construction of the sets $D_i$ starts only from $k=2$.

We now explain (g). The function $f$ (from (3) above) could be extended to $V(\cT_n)$ by contracting the stretched edges into a single edge (where internal vertices of an edge are assigned the same value as one of the end-points of that stretched edge). Fix some $1 \le k \le s_n^2b_n-2$ and some $v \in A_k$. Let $Y$ be the last vertex in $\cL_{(k+1)m_n}(\widetilde \cT_n) $ visited by the walk prior to $T_{\partial \cT_n}$. Then starting from $v$, conditioned on $T_{\partial \cT_n}<T_{\cL_{km_n-1}(\widetilde \cT_n)}$ we have that the (conditional) law of $f(Y)$   w.r.t.~the walk on $\widetilde \cT_n$ is the same as its (conditional) law  w.r.t.~the walk on $\cT_n$. This law can be approximated well by that of a sum of $m_n$ i.i.d.~Bernoulli(1/5) r.v.'s. Similarly to \eqref{eq: hitD1}, the conditional probability that $X_{T_{\partial   \cT_n}} \in \cL_{(k+1)m_n}(\widetilde \cT_n)$ is at most $c^{-1}\Pr_v[Y \in \partial \cT_n \mid  T_{\partial \cT_n}<T_{\cL_{km_n-1}(\widetilde \cT_n)}]$.  Using this observation, it is easy to see that the probability that the walk reached $\cL_{km_n}(\tilde \cT_{n})=\cL_{ks_{n}^{2}m_n}(\cT_{n}) $ without first hitting $\partial \cT_n $ is between $(1-b_n^{-1} )^{k-1}$ and $(1-c_{1}b_n^{-1} )^{k-1} $. Whence (g) follows from the fact that starting from $o$ the hitting time of  $\cL_{km_n}(\cT'_{n}) $ (conditioned that it is hit before $\partial \cT_n$) is concentrated around $3s_n^2m_nk$.

To see this, first consider the hitting time of $\cL_{km_n}(\cT'_{n}) $ with respect to the non-lazy version of the induced chain on $\cT_n'$ (starting from $o$). Note that it is concentrated around $\frac{3}{2}m_nk $ (its distance from $o$ (w.r.t.~$\cT_n'$) is distributed like a biased nearest-neighbor random walk with a fixed bias). Finally, since for a lazy SRW on $\Z_+ $ we have that $\mathbb{E}_0[T_{\{-s_n,s_n \}}]=2s_n^2 $ and $\mathbb{E}_0[T_{\{-s_n,s_n \}}^{2}]=O(s_n^4) $, the conditional distribution of $T_{\cL_{km_n}(\cT'_{n})}$ (starting from $o$), given that $T_{\cL_{(k-1)m_n}(\cT'_{n})}>T_{\partial \cT_n}$, is extremely close to that of $\sum_{i=1}^{T}W_i$, where $T$ is distributed like $T_{\cL_{km_n}(\cT'_{n})} $ w.r.t.~the aforementioned non-lazy walk on $\cT_n'$ and $W_1,W_2,\ldots$ are i.i.d.~random variables of mean $2s_n^2$ and variance $O(s_n^4)$ (which are also independent of $T$). Hence (g) follows from the CLT in conjunction with the aforementioned concentration of $T$ around time $\frac{3}{2}m_nk$.

\medskip

We now describe the $o(1)$-perturbation described in the assertion of the theorem. For every vertex $v \in A_{k}$ for some $k<m_nb_{n}s_n^2-1$, we increase the edge weight of every edge belonging to some $\gamma_{u,w}$ such that $w$ is a left child of $u$ in $\cT_v$ to $1+1/b_{n}^{1/3}$.

\medskip

We consider also the perturbed network on $\widetilde \cT_n$ in which we increase the edge weight between each vertex and its left child to $1+1/b_n^{1/3}$.
Let $k,v$ and $Y$ be as in the paragraph following \eqref{eq: geometric}. A simple network reduction shows that  starting from $v$, conditioned on $T_{\partial \cT_n}<T_{\cL_{km_n-1}(\widetilde \cT_n)}$ we have that the (conditional) law of $f(Y)$  w.r.t.~the walk on the perturbed network on $\widetilde \cT_n$ is the same as its (conditional) law  w.r.t.~the perturbed walk on $\cT_n$. Similarly to part (2 a) of Fact \ref{fact: SRWonZ}, this law can be approximated well by that of a sum of $m_n$ i.i.d.~Bernoulli($1/5+O(1/b_n^{1/3})$) r.v.'s.

As before, up to constants, we may consider $\Pr_v[Y \in \partial \cT_n \mid  T_{\partial \cT_n}<T_{\cL_{km_n-1}(\widetilde \cT_n)}]$ rather than $\Pr_v[X_{T_{\partial   \cT_n}} \in \cL_{(k+1)m_n}(\widetilde \cT_n)\mid  T_{\partial \cT_n}<T_{\cL_{km_n-1}(\widetilde \cT_n)}]$ (here both probabilities are considered w.r.t.~the perturbed network). Note that the former event requires a deviation of order $m_n/b_n^{1/3}$ from the mean of $f(Y)$ and thus its conditional probability could be bounded from above by $\exp(-c'_2m_nb_n^{-2/3})$.

By the above discussion, it is easy to see that also after this perturbation (d), (e) and (f) remain valid and  that for all $k < s_n^2b_n $
$$\Pr_o\left[T_{\partial \cT_n} \ge T_{\cL_{(k+1)m_n}(\cT'_n)} \right] \ge 1-\left(1-\exp[-c_2m_nb_n^{-2/3}]\right)^{k}-o(1),$$
for some constant $c_2>0$. In particular, since we took $e^{m_n/b_n}=\Theta (s_n^2b_n)$ we get that $$\Pr_o\left[T_{\partial \cT_n} \neq T_{\cL_{s_n^2m_nb_n}(\cT'_{n})}  \right]=o(1).$$
As before, it follows that $T_{\partial \cT_n}$ is concentrated around $3s_n^4m_n b_n$. By (d) and (e) this implies that after the perturbation the random walk exhibits cutoff around this time. In particular, the order of the mixing time increased by a factor of $s_n^2$. Observe that from our assumption that $e^{m_n/b_n}=\Theta (s_n^2b_n)$ we get that  $\log |V_n|/\log \log |V_n| =O( (s_nb_n)^2)$. Note that by taking $b_n$ to tend to infinity arbitrarily slowly, we can increase the order of mixing time by a factor arbitrarily close to $\frac{\log |V_n|}{\log \log |V_n|}$ (as long as it is $o(\log |V_n|/\log \log |V_n|)$). \qed

\begin{remark}
\label{rem: Geo}
Lack of pre-cutoff implies that (along a certain subsequence) the chains mix ``very-gradually". We note that the graphs $G_n$ from Theorem \ref{thm sensitivity} exhibit, in some sense, the most gradual mixing possible.
  In general, $\mix (\eps) \le \lceil2 |\log_{2} \eps| \rceil \mix$, for all $0<\epsilon < 1/4$. Using the results in \cite{cf:cutoff}, it is not hard to show that for reversible chains, under the product condition, there is some $o(1)$ term (depending only on $\reln/\mixn$) such that \begin{equation*}
 \label{eq: gradual2}
\liminf_{n \to \infty}[\mixneps + \mixn(\delta -o(1))]/\mixn(\epsilon \delta) \ge 1, \text{ for all }0<\eps,\delta<1.
\end{equation*}  The graphs $G_n$ from Theorem \ref{thm sensitivity} satisfy that
$$  \mixneps \ge c |\log \eps| \mixn(1/2), \text{ for all }0<\eps<1/2 \text{ and all }n.$$

We note that (by stretching the stretched edges in the construction by a slightly larger factor) we could have constructed the graphs $G_n$ from Theorem \ref{thm sensitivity} so that $\mix(G_n)/\mathrm{girth}(G_n)$ tends to infinity arbitrarily slowly (but still $\mathrm{girth}(G_n)=\Theta (\mathrm{diameter}(G_n))$)  and so that
 \begin{equation}
 \label{eq: gradual}
\lim_{n \to \infty}[\mixneps + \mixn(\delta)]/\mixn(\epsilon \delta)=1, \text{ for all }0<\eps,\delta<1.
\end{equation}
The probabilistic interpretation of \eqref{eq: gradual} is that, loosely speaking, there is some random time $\tau_n$ (which can be taken to be a certain hitting time) having roughly a Geometric distribution such that the chain is extremely mixed in time $ \tau_n+o(\mathbb{E}[ \tau_n])$ (but its distance from $\pi$ is $1-o(1)$ at time $\tau_n-o(\mathbb{E}[ \tau_n])$).
\end{remark}

\section{Appendix}
\subsection{Hitting times connection to mixing times}
The aim of this section is to introduce some general theory which shall reduce the analysis of our examples to the analysis of hitting time distributions of certain sets.

\begin{definition}
\label{def: Linfty}
Let $(\Omega,P,\pi)$ be a finite  irreducible reversible Markov chain.  For
any $f \in \R^{\Omega}$,
let $\mathbb{E}_{\pi}[f]:=\sum_{x \in \Omega}\pi(x)f(x)$ and $\Var_{\pi}f:=\mathbb{E}_{\pi}[(f-\mathbb{E}_{\pi}f)^{2}]$.
The inner-product $\langle \cdot,\cdot \rangle_{\pi}$ and $L^{p} $ norm are
$$\langle f,g\rangle_{\pi}:=\mathbb{E}_{\pi}[fg] \text{ and } \|f \|_p:=\left(
\mathbb{E}_{\pi}[|f|^{p}]\right)^{1/p},\, 1 \le p < \infty,$$
and $\|f \|_{\infty}:=\max_{x \in \Omega }|f(x)|$. For any two distribution
$\mu  $ on $\Omega$ and $p \ge 1$ we define
$$\|\mu-\pi \|_{p,\pi}:=\|f_{\mu}-1\|_p, \quad \text{where }f_{\mu}(x):=\mu(x)/\pi(x). $$

\end{definition}

The following lemma is standard.
\begin{lemma}
\label{lem: contraction}
Let $(\Omega,P,\pi)$ be a finite lazy  irreducible reversible Markov chain.
Let
$\mu $ be a distribution on $\Omega$ and let $\lambda_2$ be the second largest
eigenvalue of $P$. Then for all $t \ge 0$
\begin{equation}
\label{eq: L2contraction}
2 \|\Pr_\mu^t-\pi \|_{\TV} \le \|\Pr_\mu^t-\pi \|_{2,\pi} \le \lambda_2^t
\|\mu-\pi \|_{2,\pi}.
\end{equation}

\end{lemma}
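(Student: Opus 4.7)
The plan is to split the double inequality into its two parts and handle each with standard tools. For the left inequality, I would rewrite the total variation distance as an $L^1$ norm against $\pi$: noting that $2\|\Pr_\mu^t - \pi\|_{\TV} = \sum_x |\Pr_\mu^t(x) - \pi(x)| = \sum_x |f_{\Pr_\mu^t}(x) - 1|\,\pi(x) = \|f_{\Pr_\mu^t} - 1\|_1$, where $f_{\nu}(x) := \nu(x)/\pi(x)$. Then Cauchy--Schwarz (equivalently, Jensen's inequality applied to $x \mapsto x^2$ against the probability measure $\pi$) gives $\|f_{\Pr_\mu^t}-1\|_1 \le \|f_{\Pr_\mu^t}-1\|_2 = \|\Pr_\mu^t - \pi\|_{2,\pi}$, which is the first inequality.

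For the right inequality I would use reversibility to argue that $P$ is self-adjoint on $L^2(\pi)$, so it admits an orthonormal basis of real eigenfunctions $\phi_1 \equiv 1, \phi_2, \ldots, \phi_{|\Omega|}$ with eigenvalues $1 = \lambda_1 > \lambda_2 \geq \cdots \geq \lambda_{|\Omega|} \geq 0$ (nonnegativity coming from laziness). A short calculation using reversibility, $\pi(x)P^t(x,y)=\pi(y)P^t(y,x)$, shows that the density $f_{\Pr_\mu^t}$ of $\Pr_\mu^t$ w.r.t.\ $\pi$ satisfies $f_{\Pr_\mu^t} = P^t f_\mu$, where $P$ acts on functions by $(Pg)(y) = \sum_x P(y,x) g(x)$. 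Since $P \cdot 1 = 1$, we get $f_{\Pr_\mu^t} - 1 = P^t(f_\mu - 1)$.

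Now I would expand $f_\mu - 1 = \sum_{i \geq 2} c_i \phi_i$ (the component along $\phi_1 \equiv 1$ vanishes because $\langle f_\mu - 1, 1\rangle_\pi = \mathbb{E}_\pi[f_\mu] - 1 = 0$). Applying $P^t$ and using orthonormality yields
\[
\|\Pr_\mu^t - \pi\|_{2,\pi}^2 = \|P^t(f_\mu-1)\|_2^2 = \sum_{i \geq 2} c_i^2 \lambda_i^{2t} \leq \lambda_2^{2t} \sum_{i \geq 2} c_i^2 = \lambda_2^{2t}\|\mu - \pi\|_{2,\pi}^2,
\]
where the inequality uses $0 \leq \lambda_i \leq \lambda_2$ for $i \geq 2$. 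Taking square roots gives the second inequality.

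There is no real obstacle here; the argument is routine spectral theory for reversible chains. The only point that warrants a line of care is verifying $f_{\Pr_\mu^t} = P^t f_\mu$ via reversibility, and noting that laziness is what ensures $\lambda_{|\Omega|} \geq 0$ so that $\lambda_2 = \max_{i\geq 2}|\lambda_i|$ controls the contraction.
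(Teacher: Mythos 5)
Your proof is correct and follows the same route as the paper: the first inequality via $2\|\Pr_\mu^t-\pi\|_{\TV}=\|\Pr_\mu^t-\pi\|_{1,\pi}\le\|\Pr_\mu^t-\pi\|_{2,\pi}$ by Cauchy--Schwarz, and the second via the spectral decomposition of the self-adjoint operator $P$ (which the paper simply cites, while you spell out the eigenfunction expansion and the identity $f_{\Pr_\mu^t}-1=P^t(f_\mu-1)$). No gaps.
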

\begin{proof} The first inequality in (\ref{eq: L2contraction})
follows from the fact that $2 \|\Pr_\mu^t-\pi \|_{\TV}=\|\Pr_\mu^t-\pi \|_{1,\pi} \le \|\Pr_\mu^t-\pi \|_{2,\pi}   $ by the Cauchy-Schwatrz inequality. The second inequality
in (\ref{eq: L2contraction}) is
proved using the spectral decomposition in a straightforward manner (e.g.~\cite[Lemma 3.26]{cf:Aldous}).
\end{proof}

\begin{definition}
\label{def: Cheeger}
Let $(\Omega,P,\pi)$ be a finite Markov chain. For a set $A \subset \Omega $ denote $Q(A):=\sum_{x \in A,y \notin A }\pi(x)P(x,y) $. Denote $\Phi(A):=Q(A)/\pi(A) $. We define the \textbf{{\em Cheeger constant}} of the chain as $\Phi:=\min_{A:0< \pi(A) \le 1/2}\Phi(A) $.
\end{definition}
\begin{definition}
Let $G=(V,E)$ be a finite graph. The edge boundary of a set $S \subset V $ is defined as $\partial_{E}S:=\{\{s,s'\} \in E :s \in S,s' \notin S \}$.  The Cheeger constant of lazy simple random walk on $G$ is defined as
$$\mathrm{ch}_{\mathrm{L}}(G):=\min_{S:0<\pi(S) \le 1/2}\frac{|\partial_{E}S|}{2\sum_{v \in S}\deg (v)}, $$
which coincides with Definition \ref{def: Cheeger} (see e.g.~\cite[Remark 7.2]{cf:LPW}).
We say that $G$ is a $c$-lazy expander if $\mathrm{ch}_{\mathrm{L}}(G)>c $. We say that a sequence of finite graphs $(G_n)_{n \ge 1}$ is a family of $c$-lazy expanders if $\inf_n \mathrm{ch}_{\mathrm{L}}(G_{n})>c
$.
\end{definition}

\medskip

The following theorem is the well-known discrete analog of Cheeger's inequality \cite{cf:Alon1,cf:Alon2,cf:Sinclair} (the proof could also be found at \cite[Theorem
13.14]{cf:LPW}).
\begin{theorem}
\label{thm: Cheeger}
Let $\lambda_2$ be the second largest eigenvalue of a reversible transition matrix on a finite state space. Let $\Phi$ be as in Definition \ref{def: Cheeger}.  Then
\begin{equation}
\label{eq: Sinclair}
\Phi^2/2 \le 1- \lambda_2 \le 2\Phi.
\end{equation}
\end{theorem}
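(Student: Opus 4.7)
The plan is to derive both inequalities from the variational (Rayleigh quotient) characterization
\[
1-\lambda_2 \;=\; \min_{\substack{f:\,\mathbb E_\pi f=0\\ f\not\equiv 0}} \frac{\cE(f,f)}{\|f\|_2^2}, \qquad \cE(f,f):=\tfrac12\sum_{x,y}(f(x)-f(y))^2\pi(x)P(x,y),
\]
which is a consequence of reversibility (so $P$ is self-adjoint on $L^2(\pi)$). All the work is in choosing good test functions.

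For the easy inequality $1-\lambda_2\le 2\Phi$, I would pick a set $A$ with $\pi(A)\le 1/2$ achieving (or approaching) $\Phi(A)=\Phi$, and use $f=\ind_A-\pi(A)$ as the test function. A direct computation gives $\cE(f,f)=Q(A)$ (using reversibility to symmetrize the boundary sum) and $\|f\|_2^2=\pi(A)\pi(A^c)\ge \pi(A)/2$. Plugging in the Rayleigh quotient bound yields $1-\lambda_2\le Q(A)/(\pi(A)\pi(A^c))\le 2Q(A)/\pi(A)=2\Phi(A)$. Taking infimum over such $A$ finishes this half.

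The harder inequality $\Phi^2/2\le 1-\lambda_2$ proceeds by extracting a nonnegative test function $g$ from an eigenfunction of $\lambda_2$ that is supported on a ``small'' set. Concretely: let $f$ be a $\lambda_2$-eigenfunction, pick $c$ a median of $f$ so that $\pi(\{f>c\})\le 1/2$, and set $g:=(f-c)_+$. A short calculation using $Pf=\lambda_2 f$ shows $\cE(g,g)/\|g\|_2^2\le 1-\lambda_2$. Then I would apply the discrete co-area inequality to $g^2$: writing $g^2(x)-g^2(y)=\int_0^\infty(\ind_{g^2(x)>t}-\ind_{g^2(y)>t})\,dt$ and using that each level set $S_t=\{g^2>t\}$ satisfies $\pi(S_t)\le 1/2$, one obtains
\[
\sum_{x,y}|g(x)^2-g(y)^2|\pi(x)P(x,y) \;\ge\; 2\Phi\,\|g\|_2^2.
\]
A Cauchy--Schwarz factorization $|g(x)^2-g(y)^2|=|g(x)-g(y)|(g(x)+g(y))$ together with $(g(x)+g(y))^2\le 2(g(x)^2+g(y)^2)$ bounds the left-hand side above by $2\sqrt{2\cE(g,g)}\,\|g\|_2$. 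Combining the two yields $\Phi^2\|g\|_2^2\le 2\cE(g,g)$, hence $\Phi^2/2\le \cE(g,g)/\|g\|_2^2\le 1-\lambda_2$.

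The main obstacle is the lower bound: one must carefully arrange the truncation so that $g$ is simultaneously (i) supported on a set of $\pi$-measure at most $1/2$ (so the co-area inequality sees $\Phi$), and (ii) still has Rayleigh quotient at most $1-\lambda_2$. Taking $g=(f-c)_+$ with $c$ a median works because the pointwise inequality $(f-c)_+\cdot [(I-P)(f-c)_+](x)\le (f-c)_+(x)\cdot[(I-P)f](x)=(1-\lambda_2)(f-c)_+(x)\,f(x)$ holds at every $x$; summing against $\pi$ and using $(f-c)_+f\le (f-c)_+^2+c(f-c)_+$ plus $\mathbb E_\pi[(f-c)_+]\cdot c \ge 0$ (after possibly replacing $f$ by $-f$) gives the desired $\cE(g,g)\le(1-\lambda_2)\|g\|_2^2$. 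Everything else is bookkeeping with Cauchy--Schwarz and the definition of $\Phi$.
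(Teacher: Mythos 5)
The paper itself does not prove Theorem \ref{thm: Cheeger}: it quotes it as the classical discrete Cheeger inequality and points to \cite{cf:Alon1,cf:Alon2,cf:Sinclair} and \cite[Theorem 13.14]{cf:LPW} for proofs, so there is no internal argument to compare against. Your sketch is a correct, essentially self-contained proof along exactly those classical lines: the upper bound $1-\lambda_2\le 2\Phi$ via the test function $\ind_A-\pi(A)$ in the Rayleigh quotient (with $\cE(\ind_A,\ind_A)=Q(A)$ by reversibility and $\Var_\pi(\ind_A)=\pi(A)\pi(A^c)\ge\pi(A)/2$), and the lower bound via truncating the $\lambda_2$-eigenfunction, the co-area decomposition of $\sum_{x,y}|g^2(x)-g^2(y)|\pi(x)P(x,y)$ over level sets of measure at most $1/2$, and Cauchy--Schwarz; this is the Cheeger/Jerrum--Sinclair argument. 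One sign in your final step is backwards: after bounding $\cE(g,g)\le(1-\lambda_2)\mathbb{E}_\pi[g f]=(1-\lambda_2)\bigl(\|g\|_2^2+c\,\mathbb{E}_\pi[g]\bigr)$ you need $c\,\mathbb{E}_\pi[(f-c)_+]\le 0$, not $\ge 0$, to conclude; this is precisely what the replacement $f\mapsto -f$ buys, since it lets you take the median $c\le 0$ while retaining $\pi(\{f>c\})\le 1/2$, and then $\{f>c\}\neq\emptyset$ because $\mathbb{E}_\pi f=0$ and $f\not\equiv 0$. Alternatively, you can skip the median bookkeeping entirely: choose the sign of $f$ so that $\pi(\{f>0\})\le 1/2$ (always possible) and take $g=f_+$; then $c=0$, the cross term vanishes, and the rest of your argument goes through verbatim.
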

The following proposition enables  us to reduce the problem of bounding $d(t)$ from above  in the proofs of Theorems \ref{thm sensitivity} and \ref{thm: sensitive2} to the problem of estimating the probability that a certain large set $A$ (``the center of mass of the chain") was not hit by time $t$.
\begin{proposition}
\label{prop: L2contractiononasubchain}
Let
$G=(V,E)$ be a finite connected graph. Fix some edge weights $(c_e)_{e \in E}$. Assume that $1 \le \sum_{u}c_{v,u} \le D$ for all $v \in V$. Let $(V,P,\pi)$ be the  lazy random walk on the corresponding network.

 Let $0<\epsilon < 1$. Let $A \subset V$. Let $\partial_{V} A:=\{a
\in A:\exists b \notin A \text{ s.t. }\{a,b\}\in E  \}$ and $\tilde A:=A \setminus
\partial_{V} A$. Assume that $\pi(\tilde A) \ge 1-\eps/3$. Let $(A,\tilde P,\tilde \pi)$ be   lazy
random walk on the induced network on $A$. Denote its Cheeger constant by $c$.  Denote $r:=\left\lceil \frac{2}{c^2} \log
\left(\frac{3 D |A| }{2\epsilon} \right) \right\rceil $.
Let $x \in \tilde A $ be such that $\Pr_{x}[T_{\partial_V A} < r] \le \eps/3
$. Then
$$\|\Pr_x^{r}-\pi \|_{\TV} \le \eps.$$
\end{proposition}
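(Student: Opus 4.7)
My plan is to bring $\Pr_x^r$ close to $\pi$ through two intermediate distributions: the law $\mu_r$ of $\tilde X_r$, the induced $A$-chain started at $x$ and run for $r$ steps, and the stationary distribution $\tilde\pi$ of the induced chain, extended by zero on $V\setminus A$. Three estimates will combine by the triangle inequality, each contributing at most $\epsilon/3$.

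\emph{Coupling step.} Since every $v\in \tilde A$ has no neighbor outside $A$, the total weight $\sum_{u}c_{v,u}$ is the same in $G$ and in the induced network, so $P(v,\cdot)$ and $\tilde P(v,\cdot)$ agree for every $v\in \tilde A$. I couple $X$ and $\tilde X$, both started at $x$, to move in lock-step until $X$ first hits $\partial_V A$; on $\{T_{\partial_V A}\geq r\}$ this forces $X_r=\tilde X_r$. Consequently $\|\Pr_x^r-\mu_r\|_{\TV}\leq \Pr_x[T_{\partial_V A}<r]\leq \epsilon/3$.

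\emph{Spectral contraction.} The Cheeger bound \eqref{eq: Sinclair} applied to the induced chain yields $\tilde\lambda_2\leq 1-c^2/2\leq e^{-c^2/2}$, and Lemma \ref{lem: contraction} gives
\[ \|\mu_r-\tilde\pi\|_{\TV}\leq \tfrac{1}{2}\tilde\lambda_2^{\,r}\|\delta_x-\tilde\pi\|_{2,\tilde\pi}\leq \tfrac{1}{2}e^{-c^2 r/2}/\sqrt{\tilde\pi(x)}. \]
Since $c_x\geq 1$ and $c'_A:=\sum_{v\in A}\sum_{u\in A}c_{v,u}\leq \sum_{v\in A}c_v\leq D|A|$, we have $\tilde\pi(x)=c_x/c'_A\geq 1/(D|A|)$. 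Substituting the prescribed $r$ yields $\|\mu_r-\tilde\pi\|_{\TV}\leq \epsilon/(3\sqrt{D|A|})\leq \epsilon/3$.

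\emph{Comparison of $\tilde\pi$ and $\pi$.} Because $c'_A\leq c_V$, for every $v\in \tilde A$ we have $\tilde\pi(v)=c_v/c'_A\geq c_v/c_V=\pi(v)$. Hence the positive part of $\pi-\tilde\pi$ is supported on $V\setminus \tilde A$, giving
\[ \|\tilde\pi-\pi\|_{\TV}=\sum_v(\pi(v)-\tilde\pi(v))_+\leq \pi(V\setminus \tilde A)\leq \epsilon/3. \]
Adding the three contributions via the triangle inequality completes the argument. The only mildly delicate point, and the one I expect to require the most care, is the comparison step: the key observation is the one-sided inequality $\tilde\pi|_{\tilde A}\geq \pi|_{\tilde A}$, which is forced by renormalising against a smaller total weight and lets me bound the discrepancy directly by $\pi(V\setminus \tilde A)$ without having to track the boundary outflow $c_A-c'_A$ or compare $\tilde\pi$ with the conditional measure $\pi(\,\cdot\mid A)$.
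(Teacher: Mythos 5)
Your argument is correct and is essentially the paper's own proof: the same three-term triangle inequality through $\tilde P^r(x,\cdot)$ and $\tilde\pi$, with the coupling bound $\Pr_x[T_{\partial_V A}<r]$, the $\ell_2$-contraction of Lemma \ref{lem: contraction} combined with Cheeger's inequality \eqref{eq: Sinclair}, and the bound $\|\tilde\pi-\pi\|_{\TV}\le 1-\pi(\tilde A)$. You merely spell out details the paper leaves implicit (the lock-step coupling valid on $\tilde A$ and the one-sided comparison $\tilde\pi\ge\pi$ on $\tilde A$), and your spectral step is marginally sharper, using $\|\delta_x-\tilde\pi\|_{2,\tilde\pi}\le 1/\sqrt{\tilde\pi(x)}$ instead of $1/\tilde\pi(x)$.
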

\begin{proof}
First note that by (\ref{eq: L2contraction}) together with (\ref{thm: Cheeger})  $$\|\tilde P^r(x,\cdot)-\tilde \pi\|_{\TV}\le \frac{1}{2} \|\tilde P^r(x,\cdot)-\tilde \pi\|_{2,\tilde \pi} \le \frac{1}{2\tilde \pi(x)}\lambda_2^{r} \le \frac{D|A|}{2}e^{-\log \left(\frac{3D|A|}{2\epsilon} \right)} =\eps/3.$$ We also have that  $\|\tilde \pi-\pi \|_{\TV} \le 1-\pi(\tilde A) \le \eps/3$. By a straightforward
coupling argument $\|\Pr_x^{r}(\cdot)-\tilde P^{r}(x,\cdot)
\|_{\TV} \le \Pr_{x}[T_{\partial A} < r] \le \eps/3  $.  Finally, by the triangle inequality we get that $\|\Pr_x^{r}-\pi
\|_{\TV} \le 3 \cdot \frac{\eps}{3}=\eps$.
\end{proof}
\subsection{A useful lemma for bounding the relaxation-time}
The following lemma allows us to easily bound the relaxation-time of our examples.

\begin{lemma}
\label{lem: relaxation}
Let $G_n=(V_n,E_n)$ be a family of $c$-lazy expanders.
\begin{itemize}
\item[(i)] Let $H_n$ be a sequence of graphs obtained by stretching some of the edges in $G_n$ by a factor of $s_n$. Then $\rel(H_n)=O(s_n^2)$.
\item[(ii)] Let $F_n=(W_n,U_n)$ be a sequence of graphs obtained by decorating some of the vertices of $G_n$ with a 3 dimensional torus of side length $k_n$. Then $\rel(H_n)=\Omega(k_n^3)$.
\end{itemize}
\end{lemma}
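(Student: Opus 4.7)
Plan: Both parts proceed via the variational characterization $\mathrm{gap}(\Gamma) = \inf_{f} \cE_{\Gamma}(f,f)/\Var_{\pi_{\Gamma}}(f)$, where $\cE_{\Gamma}(f,f)=\sum_{\{x,y\}\in E}\pi(x)P(x,y)(f(x)-f(y))^2$. For part (i) I would produce, for every $f:V(H_n)\to\R$, an inequality $\Var_{\pi_{H_n}}(f)\le C s_n^{2}\cE_{H_n}(f,f)$. For part (ii) I would exhibit a test function with small Rayleigh quotient, namely the indicator of a single decorated torus.

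For part (i), let $\bar f:=f|_{V(G_n)}$. First, for each edge $\{x,y\}\in E(G_n)$ that is subdivided into a path $x=v_0,v_1,\ldots,v_{s_n}=y$ in $H_n$, Cauchy--Schwarz gives $(\bar f(x)-\bar f(y))^2\le s_n\sum_{i=1}^{s_n}(f(v_i)-f(v_{i-1}))^2$. Summing over $E(G_n)$ and comparing the weights $\pi_{G_n}(x)P_{G_n}(x,y)\asymp 1/|E(G_n)|$ with $\pi_{H_n}(u)P_{H_n}(u,v)\asymp 1/|E(H_n)|$ yields
\[
\cE_{G_n}(\bar f,\bar f)\le \frac{s_n|E(H_n)|}{|E(G_n)|}\,\cE_{H_n}(f,f).
\]
Since $G_n$ is a $c$-lazy expander, Theorem \ref{thm: Cheeger} gives $\rel(G_n)=O(1)$, so $\Var_{\pi_{G_n}}(\bar f)\le C s_n|E(H_n)|/|E(G_n)|\cdot \cE_{H_n}(f,f)$. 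Next I decompose $f=\tilde f+g$ on $V(H_n)$, where $\tilde f$ interpolates linearly between $f(v_0)$ and $f(v_{s_n})$ along each subdivided path, and $g=f-\tilde f$ vanishes on $V(G_n)$. By convexity of squared deviations,
\[
\Var_{\pi_{H_n}}(\tilde f)\le C\frac{s_n|E(G_n)|}{|E(H_n)|}\Var_{\pi_{G_n}}(\bar f),
\]
and combining with the previous bound the factors $|E(G_n)|/|E(H_n)|$ cancel, leaving $\Var_{\pi_{H_n}}(\tilde f)\le C s_n^{2}\cE_{H_n}(f,f)$. For the residual $g$, which has zero boundary values on each stretched path, the standard discrete Poincar\'e inequality on a path of length $s_n$ gives $\sum_{i}g(v_i)^2\le C s_n^{2}\sum_{j}(g(v_j)-g(v_{j-1}))^2$, and the Dirichlet sum of $g$ along each path is controlled by that of $f$ (via $|(g(v_j)-g(v_{j-1}))|\le |f(v_j)-f(v_{j-1})|+|f(v_{s_n})-f(v_0)|/s_n$ and another Cauchy--Schwarz); weighting by $\pi_{H_n}(v_i)=O(1/|E(H_n)|)$ and summing over paths yields $\E_{\pi_{H_n}}[g^2]\le C s_n^{2}\cE_{H_n}(f,f)$. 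Adding the two contributions gives $\Var_{\pi_{H_n}}(f)\le C s_n^{2}\cE_{H_n}(f,f)$, i.e.\ $\rel(H_n)=O(s_n^{2})$.

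For part (ii), take $f=\mathbf{1}_{W_v}$ for some decorated $v$. Because $v$ has bounded degree in $G_n$, the only edges of $F_n$ leaving $W_v$ are the at most $D$ edges from $v$ to its neighbors in $V_n\setminus\{v\}$; thus $\cE_{F_n}(f,f)\le D/(4|E(F_n)|)$. On the other hand, $W_v$ has $k_n^{3}$ vertices of degree $6$ in $F_n$ (plus the small correction at $v$), so $\pi_{F_n}(W_v)\ge 3k_n^{3}/|E(F_n)|$. In the natural regime where $|E_n|\to\infty$ (so $\pi_{F_n}(W_v)\le 1/2$ for all sufficiently large $n$), $\Var_{\pi_{F_n}}(f)\ge \pi_{F_n}(W_v)/2\ge c k_n^{3}/|E(F_n)|$. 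The Rayleigh quotient is therefore $\cE/\Var\le C/k_n^{3}$, so $\rel(F_n)\ge \Omega(k_n^{3})$.

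The main obstacle is in part (i): naive applications of Cauchy--Schwarz along the subdivided paths produce spurious factors of $|E(G_n)|/|E(H_n)|$, which in the extreme regime where only a few edges of $G_n$ are stretched (so $|E(H_n)|\asymp |E(G_n)|$) degrade the bound to $O(s_n^{3})$. The decomposition $f=\tilde f+g$ is essential precisely to make these factors cancel and recover the sharp $O(s_n^{2})$ bound in all regimes, from ``stretch all edges'' down to ``stretch one edge''.
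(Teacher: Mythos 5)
Your proof is correct, but for part (i) it takes a genuinely different route from the paper. The paper's argument is essentially two lines: it quotes an external result (\cite[Proposition 2.3]{Hermon2016}) asserting that stretching edges by a factor $s_n$ can decrease the Cheeger constant only by a factor of order $s_n$, so $\Phi(H_n)=\Omega(s_n^{-1})$ since $G_n$ is a $c$-lazy expander, and then the lower bound in Cheeger's inequality \eqref{eq: Sinclair} gives $\mathrm{gap}(H_n)\ge \Phi^2/2=\Omega(s_n^{-2})$ --- the quadratic loss in Cheeger is exactly what produces the (sharp) $s_n^2$. You instead prove a Poincar\'e inequality directly from the variational characterization, comparing Dirichlet forms along the subdivided paths and splitting $f$ into the path-interpolation $\tilde f$ plus a residual $g$ with Dirichlet boundary conditions on each stretched path; this is self-contained (no citation needed), bypasses Cheeger for the stretched graph, makes the cancellation of the $|E(G_n)|/|E(H_n)|$ factors explicit so the bound is uniform over how many edges are stretched, and would extend verbatim to weighted stretchings --- at the cost of being considerably longer than the paper's argument. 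For part (ii) the two arguments are essentially the same bottleneck computation: the paper applies the upper bound $1-\lambda_2\le 2\Phi\le 2\Phi(W_v)=O(k_n^{-3})$ from \eqref{eq: Sinclair}, and your indicator test function $\mathbf{1}_{W_v}$ is precisely the function underlying that inequality. Your two hedges (bounded degree of $G_n$, so that only $O(1)$ edges leave the torus, and $\pi_{F_n}(W_v)\le 1/2$) are also implicit in the paper's proof, the latter through the restriction $\pi(A)\le 1/2$ in Definition \ref{def: Cheeger}, and both hold in all the applications in the paper; if one wanted the statement with no such proviso, replacing $W_v$ by its complement when $\pi_{F_n}(W_v)>1/2$ repairs the estimate.
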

\begin{proof}
 Since $G_n$ is an expander and $H_n$ is obtained from it by stretching some of the edges by a factor of $s_n$, the Cheeger constant of $H_n$ is at least of order $s_n^{-1}$ (e.g.~\cite[Proposition 2.3]{Hermon2016}). Hence $\rel(H_n)=O(s_n^2)$ by (\ref{eq: Sinclair}). Part (ii) is obtained from \eqref{eq: Sinclair}.
\end{proof}
\subsection{Proof of Fact \ref{fact: SRWonZ}}
\label{s:F41}
The proof of part (1 a) is easy and hence omitted. We now prove (1 b). The upper bounds in \eqref{eq: hitD1} are trivial. We now prove that $ \Pr_o[X_{\tau_n} \in D],\Pr_o[X_{T_{\cL_n}} \in D]  \ge c \Pr_o[\mathcal{D}]$ (for some $0<c<1$, independent of $D$ and $n$). This follows from the fact that for all $d \in D$, the probability that $D$ was visited and that $d$ is the last (resp.~first) vertex in $D$ to be visited is at least $c\Pr_o[X_{\tau_n} =d] $ (resp.~$c\Pr_o[X_{T_{\cL_n}} =d] $). We leave the details to the reader. The proof of (2 b) is analogous and hence omitted.

We now prove (2 a). Denote the left and right children of $o$ by $u$ and $v$, respectively. Let $\cT_v$ and $\cT_u$ be the subtrees rooted at $v$ and $u$, resp.. We define the left and right trees rooted at $o$ to be the trees obtained by deleting $\cT_v$ and $\cT_u$, resp.. Write $w$, $w_L$ and $w_R$ for the conductance from the root to infinity in the original tree, the left tree and the right tree, respectively. Let $A$ be the event that the last vertex of $\{v,u\}$ which was visited by the walk was $u$ (i.e.~the walk got absorbed in $\cT_u$). Using a standard network reduction and the fact that  $\cT_v$ and $\cT_u$ are identical to $\cT$, we get the following relations: $w=w_{L}+w_R$, $\frac{1}{w_L}=\frac{1}{1+\epsilon}+\frac{1}{w} $,   $\frac{1}{w_R}=1+\frac{1}{w} $ and $\Pr_o[A]=\frac{w_L}{w_L+w_R}$. Solving this system of equations yields that $\Pr_o[A]= \frac{\sqrt{1+\epsilon}}{1+\sqrt{1+\epsilon}}$. Using again the fact that  $\cT_v$ and $\cT_u$ are identical to $\cT$, allows us to repeat the argument and the claim now follows by induction. \qed

\subsection{Proof of Lemma \ref{lem: pathsdecomposition}}
\label{s:lem32} 
We first prove \eqref{eq: septhroughz1}. We only prove the discrete-time lazy case, as the continuous-time case is analogous. By reversibility and the Markov property w.r.t. $\min (T_z,T_y) $
\begin{equation}
\label{eq: TAxy1}
\begin{split}
& \frac{P_{\mathrm{L}}^t(x,y)}{\pi(y)}=\sum_{k_{1}=0}^t \frac{\Pr_x[T_{z}=k_{1}<T_{y}]P_{L}^{t-k_{1}}(z,y)+}{\pi(y)}+
\frac{\Pr_x[T_{y}=k_{1}<T_{z}]\Pr_{L}^{t-k_{1}}(y,y)}{\pi(y)} \\ &
= \sum_{k_{1} =0  }^t \frac{\Pr_x[T_{z}=k_{1}<T_{y}]P_{\mathrm{L}}^{t-k}(y,z)}{\pi(z)}+ \sum_{k =0  }^t \mathbb{P}[T_{z,y}^{x}=k,T_y^x \le T_z^x ]P_{\mathrm{L}}^{t-k}(y,y)/ \pi(y)
\end{split}
\end{equation}
Now $ \sum_{k _{1}\le t } \frac{\Pr_x[T_{z}=k_{1}<T_{y}]P_{\mathrm{L}}^{t-k}(y,z)}{\pi(z)}= \sum_{ k_1,k_2 :k_1+k_2 \le t } \frac{\Pr_x[T_{z}=k_{1}<T_{y}]\Pr_y[T_{z}=k_{2}]P_{\mathrm{L}}^{t-k_{1}-k_2}(z,z)}{\pi(z)}$, which equals $  \sum_{k:k \le t }\mathbb{P}[T_{z,y}^{x}=k,T_y^x > T_z^x ]P_{\mathrm{L}}^{t-k}(z,z)/ \pi(z) $. Substituting this in \eqref{eq: TAxy1} yields the equality in \eqref{eq: septhroughz1}. The inequality follows from the fact that for all $a \in \Omega$ we have that $P_{\mathrm{L}}^{s}(a,a)/\pi(a) $ is decreasing in $s$ and tends to 1 as $s \to \infty$ which follows from the spectral decomposition and the non-negativity of the eigenvalues of $P_{\mathrm{L}}=\frac{1}{2}(I+P)$. For \eqref{eq: septhroughz3} use \eqref{eq: septhroughz1} with $z=y$. We now prove \eqref{eq: septhroughz2}. Here we only prove the continuous-time case.
 
Denote the densities of $T_z$ under $\h_x$ and $\h_y$ by $f_z^x$ and $f_z^y$, resp.. Conditioning on $T_z$ (which is deterministically smaller than $T_y$ under $\h_x$ in our current setup), then using reversibility and finally, conditioning on $T_z$ again (now under $\h_y$), we get that \[\frac{H_t(x,y)}{\pi(y)}= \int_{0}^{t}f_z^{x}(s)\frac{H_{t-s}(z,y)ds}{  \pi(y)}= \int_{0}^{t}f_z^{x}(s)\frac{H_{t-s}(y,z)ds}{ \pi(z)}\] \[ = \int_{s,r \ge 0:s+r \le t}f_z^{x}(s)f_{z}^y(r)\frac{H_{t-(s+r)}(z,z)drds}{ \pi(z)}=\int_{0}^{t}f_z^{x,y}(s)\frac{H_{t-s}(z,z)ds}{ \pi(z)} \] \[= \mathbb{P}[\tau_{z}^{x,y} \le t]+\int_{0}^{t}f_z^{x,y}(s)\frac{H_{t-s}(z,z)-\pi(z)}{ \pi(z)} ds. \]
Denote $M:=\max_sf_z^{x,y}(s)$. Denote the minimal non-zero eigenvalue of $I-P$ by  $\lambda $. Using the spectral decomposition $\frac{H_{s}(z,z)-\pi(z) }{ \pi(z)} \le e^{-\lambda s}\frac{H_{0}(z,z)-\pi(z) }{ \pi(z)}=\frac{1-\pi(z) }{ \pi(z)}e^{-\lambda s}  $, for all $s$, thus \[ \int_{0}^{t}f_z^{x,y}(s)\frac{H_{t-s}(z,z)-\pi(z) ds}{ \pi(z)} \le M \int_{0}^{\infty}\frac{H_{s}(z,z)-\pi(z) ds}{ \pi(z)} \le M\frac{1-\pi(z) }{ \pi(z)}\rel.  \]
Substituting this above gives \eqref{eq: septhroughz2}. For \eqref{eq: septhroughz4} use \eqref{eq: septhroughz2} with $z=y$.
\qed

\subsection{Proof of \eqref{eq: tdel}:}
\label{s:ap64} 
Since $H_t=e^{t(P-I)}=e^{2t(P_L-I)}$, the continuous-time version of the chain is also the continuous-time version of the lazy chain, but run twice faster. Thus, in order to sample $\tau_z^{a,b}$, we may first sample $T_{z}^{a,b} $ and a Poisson process on $\R_+$ with rate $2$, $(N_t)_{t \ge 0 }$, and then set $\tau_z^{a,b}=\inf \{t:N_t \ge T_{z}^{a,b} \} $. Thus for all $s,t$ we have
\begin{equation}
\label{eq:st}
\mathbb{P}[\tau_z^{a,b} \le t] \ge \mathbb{P}[N_t \ge s ]\mathbb{P}[T_{z}^{a,b} \le s].
\end{equation} 
Using \eqref{eq:st} (for the first inequality in \eqref{eq:LDTz0}), we argue that there exists some sufficiently small absolute constant $c>0$ such that for all $0<\delta<1/2$ we have for all sufficiently large $n$ that 
\begin{equation}
\label{eq:LDTz0}
\mathbb{P}[\tau_z^{a,b} \le \frac{1}{2}(t_{\del}-cn \sqrt{\del})] \ge \mathbb{P}[\mathrm{Pois}(t_{\del}-cn \sqrt{\del}) \ge t_{\del}+cn \sqrt{\del}]\mathbb{P}[T_{z}^{a,b} \le t_{\del}+cn \sqrt{\del}] \ge 2^{-\delta n}.
\end{equation}
In other words, (when $c$ is sufficiently small) there is an entropy gain in allowing $T_z^{a,b}$ to perform a smaller large deviation (requiring $T_{z}^{a,b} \le t_{\del}+cn \sqrt{\del} $ instead of $T_{z}^{a,b} \le t_{\del}$) while forcing a (relatively) small large deviation in the  number of steps performed by the continuous-time chain (requiring that by time $t_{\del}-cn \sqrt{\del}$ it makes at least $t_{\del}+cn \sqrt{\del}$ steps). This behavior is typical in the theory of large deviations. We now justify this claim.    

The large deviation behavior of $T_{z}^{a,b} $ is studied in detail in \cite[Example 3 and Lemma 5.1]{cf:Hermon}. In particular, it is shown that it has the same rate function as that of a certain sum i.i.d.~r.v.'s with exponential tails (see also \S~\ref{sec: lazypq}). Using a second order Taylor expansion for the rate function around the value corresponding to the mean (and the fact that the first derivative of the rate function vanishes at that point), it is also shown that for all $a \in (0,1/2] $  we have that \begin{equation}
 \label{eq: LDTz1}
c_1 a^2 \le - \frac{1}{n} \log \mathbb{P}[T_{z}^{a,b} \le 12n - an] \le C_1 a^2.
\end{equation}
Using convexity of the rate function (the Legendre transform of a convex function is itself convex)  for all $a \in (0,1/2] $  and $0<c< a $ we have that \[-\frac{1}{ n} \log \mathbb{P}[T_{z}^{a,b} \le 12n-(a+c)n] \ge - \frac{a+c-o(1)}{a} \frac{1}{ n} \log \mathbb{P}[T_{z}^{a,b} \le 12n-an] , \]
and so by \eqref{eq: LDTz1}
\begin{equation}
 \label{eq: LDTz2}
\frac{1}{ n} \log \left( \frac{  \mathbb{P}[T_{z}^{a,b} \le 12n -an]}{ \mathbb{P}[T_{z}^{a,b} \le 12n-(a+c)n] } \right) \ge -\frac{c-o(1)}{a} \frac{1}{ n}  \log \mathbb{P}[T_{z}^{a,b} \le 12n-an]  \ge c_{1}ac-o(1). 
\end{equation}
Using \eqref{eq: LDTz2}, it is not hard to see that indeed    
\begin{equation}
 \label{eq: LDTz3}
c_0\sqrt{\del}n \le 12n-t_{\del} \le C_0\sqrt{\del}n. 
\end{equation}
Using the fact that $\Pr[\mathrm{Pois}(\mu)>\mu(1+\epsilon)] \ge \exp \left(-C_{2}\epsilon^{2} \mu \right)$ for some absolute constant $C_2$ for all $\mu>0$ and $0<\epsilon<1$ yields that for some $\epsilon'>0$, for all
 $0<\delta \le 1/2 $ and $0<c<\epsilon' $ we have \[ \mathbb{P}[\mathrm{Pois}(t_{\del}-cn \sqrt{\del}) \ge t_{\del}+cn \sqrt{\del}] \ge e^{-C_{2}c^{2}\delta n}. \]
By \eqref{eq: LDTz2}-\eqref{eq: LDTz3} and the definition of $t_{\delta}$ we have  that for such $c$ and $\delta$
\[\mathbb{P}[T_{z}^{a,b} \le t_{\del}+cn \sqrt{\del}] \ge \mathbb{P}[T_{z}^{a,b} \le t_{\del}]e^{c_{2}c\delta n} \ge  2^{-\delta n}e^{c_{2}c\delta n}.  \] The last two inequalities imply \eqref{eq:LDTz0} with $c=c_2/(2C_2)$.
 \qed

\end{document}